\DeclareMathAlphabet{\pazocal}{OMS}{zplm}{m}{n}
\newtheorem{theorem}{Theorem}
\newtheorem{definition}{Definition}
\newtheorem{lemma}{Lemma}
\newtheorem{proposition}{Proposition}
\newtheorem{corollary}{Corollary}
\date{}
\numberwithin{equation}{section}
\numberwithin{theorem}{section}
\numberwithin{lemma}{section}
\numberwithin{corollary}{section}
\numberwithin{remark}{section} \numberwithin{proposition}{section}
\numberwithin{definition}{section}
\newcommand{\dd}{\mathrm{d}}
\begin{document}

\title[Regularity of solutions of American option pricing]{An obstacle problem arising from American options pricing: regularity of solutions}

\author[H. Borrin]{Henrique Borrin}
\address{Departamento de Matem\'{a}tica Pura e Aplicada, Universidade Federal do Rio Grande do Sul, Porto Alegre - RS, Brazil.}
\email{henrique.borrin@ufrgs.br}

\author[D. Marcon]{Diego Marcon}
\address{Departamento de Matem\'{a}tica Pura e Aplicada, Universidade Federal do Rio Grande do Sul, Porto Alegre - RS, Brazil.}
\email{diego.marcon@ufrgs.br}

\begin{abstract}
	We analyse the obstacle problem for the nonlocal parabolic operator
	\[
	\partial_t u + (-\Delta)^{s} u - b \cdot \nabla u  - \pazocal{I}u - ru,
	\] where $b\in\mathbb{R}^n$, $r\in\mathbb{R}$, and $\pazocal{I}$ is a nonlocal lower order diffusion operator with respect to the fractional Laplace operator $(-\Delta)^{s}$. This model appears in the study of American options pricing when the stochastic process governing the stock price is assumed to be a purely jump process. We study the existence and the uniqueness of solutions to the obstacle problem, and we prove optimal regularity of solutions in space, and almost optimal regularity in time.  
	

\bigskip

\noindent \textbf{Keywords:} Obstacle problem, fractional Laplacian, nonlocal operators, optimal regularity, free boundary problems.

\bigskip

\noindent \textbf{AMS Subject Classifications (2020):} 35K55, 35R11, 35R35, 35S10.
\end{abstract}

\maketitle

\section{Introduction}
Problems with nonlocal and fractional diffusion have been extensively studied in recent years. One of many motivations is the model of a discontinuous variation of stock prices, incorporated by Merton \cite{merton}, into the celebrated Black-Scholes equation \cite{finance}. Recall that American options allow the holder to exercise option rights before the maturity date, in constrast to European options; see \cite{finance}. Let $u$ be the rational price of an American option with payoff $\psi$. We may split the domain of $u$ into $\{u>\psi\}$ and $\{u=\psi\}$, which are known as continuation and exercise regions, respectively. These names suggest that the first time we enter the exercise region, it is optimal to exercise the option; otherwise, that is, when we are in the continuation region, we should continue the evolution of $u$. This information is encapsulated in the following obstacle problem:

\begin{equation}\label{american}
\begin{cases}
\min\left\{\partial_t u -\tfrac{1}{2}\sigma \colon D^2 u-b\cdot \nabla u-ru-\pazocal{K}u,u-\psi\right\}=0;\\
u(0,\cdot)=\psi,
\end{cases}
\end{equation}
where $r \in \mathbb{R}$ is known as the short rate, $b = (d_1+\tfrac{1}{2}\sigma_{11}-r,\dots, d_n+\tfrac{1}{2}\sigma_{nn}-r) \in \mathbb{R}^{n}$ with $d$ being the continuously compounded dividend rate of the stock, $\sigma$ is the volatility matrix of the stock, and
\[
\pazocal{K}v(t,x)\coloneqq\int_{\mathbb{R}^n}\Big[v(t,x+y)-v(t,x)-\sum_{i=1}^n(e^{y_i}-1)x_{i}\partial_{x_i}v(t,x)\Big]\, \dd \mu (y)
\]
with $\mu$ being the associated jump measure.

If there is no jump term, that is, if $\mu\equiv 0$, the regularity of \eqref{american} is well-known; see, for instance, \cite{laurencesalsa}. In this paper, we assume $\sigma\equiv 0$ so that all the regularity comes from the jump term. As in \cite{CaFi-08}, we further assume that $\dd \mu (y)$ behaves as $|y|^{-n-2s}\dd y$ as leading order, so that
\[
\pazocal{K}v(t,x)\approx -(-\Delta)^s v(t,x)+\pazocal{I}v(t,x),
\]
where $(-\Delta)^s$ is the fractional Laplacian (with respect to $x$) defined by
\[
-(-\Delta)^s f(x)\coloneqq c_{n,s} \int_{\mathbb{R}^n}\frac{f(y)-f(x)}{|y-x|^{n+2s}}\,\dd y,
\]
and $\pazocal{I}$ is a non-local operator of lower order diffusion with respect to $(-\Delta)^s$. Under this assumption, the regularity of the solution of \eqref{american} depends on the parameter $s$:
\begin{itemize}
\item  if $s<1/2$, the gradient term is of greater order with respect to $(-\Delta)^s$, and we do not expect any regularity result for $u$;
\item the case $s=1/2$ is critical in the sense that both the gradient and the fractional Laplacian have the same order, and the problem becomes very delicate;
\item  if $s>1/2$, we expect the diffusion provided by the fractional Laplacian to dominate and $u$ should be as regular as the solution of the obstacle problem for the fractional heat operator.
\end{itemize}

Our main goal is to show that the expected regularity in fact holds when $s>1/2$. More precisely, we prove optimal regularity in space and almost optimal regularity in time for continuous viscosity solutions of 
\begin{equation}\label{prin}
\begin{cases}
\min \{ \partial_t u + (-\Delta)^{s} u - b \cdot \nabla u - \pazocal{I}u - ru, \ u - \psi \} = 0 & \text{ in } (0,T] \times \mathbb{R}^{n}, \\
u(0,x)  = \psi(x) & \text{ in } \mathbb{R}^{n},
\end{cases}
\end{equation} where
\begin{enumerate}[$(i)$]
\item\label{psi} the obstacle $\psi:\mathbb{R}^{n}\longrightarrow \mathbb{R}^+$ is assumed to be a function of class $W^{2,\infty}(\mathbb{R}^n)\cap C^{2}(\mathbb{R}^n)$;
\item\label{defb} $b\in \mathbb{R}^n$ is a constant vector;
\item\label{defr} $r\in\mathbb{R}$ is a constant\footnote{Although the condition $r \ge 0$ might seem natural, see \cite{CaFi-08}, our main result holds even for $r< 0$.};
\item\label{K} $\pazocal{I}$ is a non-local, {convex}\footnote{The convexity of $\pazocal{I}$ is only used in \Cref{lem:semiconvexity}. Thus, if $u$ is a semiconvex solution of \eqref{prin} for a possibly nonconvex operator $\pazocal{I}$, the results of this paper still hold.}, translation-invariant, uniformly elliptic operator with respect to $\mathcal{L}_0$. The latter means that for all $v, \,w\in C^{2\sigma+0^+}(x)$ which satisfy\footnote{We recall that $\phi$ is said to be $C^{2\sigma+0^+}$ punctually at $x$ if there exists $v\in\mathbb{R}^n$ and $M>0$ such that $|\phi(x+y)-\phi(x)|\leq M|y|^{2\sigma+0^+}$ if $2\sigma+0^+\leq 1$ and $|\phi(x+y)-\phi(x)-v\cdot y|\leq M|y|^{2\sigma-1+0^+}$ if $2\sigma+0^+>1$ for small $y$.}
\[\int_{\mathbb{R}^n}\frac{|v(y)|+|w(y)|}{1+|y|^{n+2\sigma}}\,\dd y<\infty,\]
we have that $\pazocal{I}v(x)$ and $\pazocal{I}w(x)$ are well defined and
\begin{equation}\label{propertyI}
M^-_{\mathcal{L}_0}(v-w)(x)\leq \pazocal{I}v(x)-\pazocal{I}w(x)\leq M^+_{\mathcal{L}_0}(v-w)(x),
\end{equation}
where $\mathcal{L}_0$ is the set of operators $L$ such that
\begin{equation}\label{definitionL}
L u(x)\coloneqq \int_{\mathbb{R}^n}\delta u(x,y)K(y)\dd y, \quad \frac{\lambda}{|y|^{n+2\sigma}}\leq K(y)\leq  \frac{\Lambda}{|y|^{n+2\sigma}}, \text{ and} \quad K(y)=K(-y).
\end{equation}
The extremal operators $M^+_{\mathcal{L}_0}$ and $M^-_{\mathcal{L}_0}$ are analogous to Pucci operators
\[\begin{split}
M^+_{\mathcal{L}_0}u(x)\coloneqq \sup_{L\in \mathcal{L}_0} Lu(x)\equiv \int_{\mathbb{R}^n} \frac{\Lambda (\delta u(x,y))^+-\lambda (\delta u(x,y))^-}{|y|^{n+2\sigma}}\dd y,\\
M^-_{\mathcal{L}_0}u(x)\coloneqq \inf_{L\in \mathcal{L}_0} Lu(x)\equiv \int_{\mathbb{R}^n} \frac{\lambda (\delta u(x,y))^+-\Lambda (\delta u(x,y))^-}{|y|^{n+2\sigma}}\dd y,
\end{split}\]
where $\delta u(x,y)\coloneqq u(x+y)+u(x-y)-2u(x)$. For simplicity, we assume $\pazocal{I}(0)=0$. We assume further that $\pazocal{I}$ is a lower order diffusion operator when compared to the fractional Laplacian $(-\Delta)^s$ in the sense that $s>\sigma>0$.
Notice that since $L u \in C^{\alpha}(\mathbb{R}^n)$ whenever $u\in C^{2\sigma+\alpha}(\mathbb{R}^n)$ for some $\alpha>0$, we have $\pazocal{I}u \in C^{\alpha}(\mathbb{R}^n)$.
\end{enumerate}
Quintessential examples of $\pazocal{I}$ include linear operators as $-(-\Delta)^\sigma$ and $L$, as well as more sophisticated nonlinear examples studied in \cite{ellipticity}, such as 
\[
\pazocal{I}u= \sup_{\beta}L_\beta u, \quad \pazocal{I}u(x)= \int_{\mathbb{R}^n}\frac{G(u(x+y)-u(x))}{|y|^{n+2\sigma}}\dd x,
\] 
where $K_{\beta}$ satisfy \eqref{definitionL} uniformly with respect to $\beta$ and $G$ is a convex monotone Lipschitz function with $G(0)=0$.

In light of \cite[Section 2]{silvestre}, we remark that since we need the well posedness of the inverse of the fractional Laplacian and we assume that $s>1/2$, we consider throughout the paper that the dimension satisfies $n\geq 2$.
The main result of this paper is the following:

\begin{theorem}\label{maintheorem}
	Assume that $\psi$, $b$, $r$, and $\pazocal{I}$ satisfy conditions \eqref{psi}, \eqref{defb}, \eqref{defr}, and \eqref{K}, respectively. Then, there exists a unique (continuous) viscosity solution $u$ of \eqref{prin}. Moreover, $u$ is globally Lipschitz on $(0,T]\times\mathbb{R}^n$, and we have 
	\[
		\partial_t u\in C_{t,x}^{\frac{1-s}{2s}-0^+,1-s}((0,T]\times\mathbb{R}^n) \qquad \text{ and } \qquad
		(-\Delta)^su \in C_{t,x}^{\frac{1-s}{2s},1-s}((0,T]\times\mathbb{R}^n).
     \]
\end{theorem}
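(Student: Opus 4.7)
The plan is to split the proof into four stages: existence and uniqueness of a continuous viscosity solution; global Lipschitz regularity in time and space; optimal spatial regularity via semiconvexity and a Caffarelli--Salsa--Silvestre type blow-up analysis; and time regularity obtained by combining the equation with the natural parabolic scaling.

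For existence I would use a penalization procedure, replacing the min constraint in \eqref{prin} by a smooth equation $P_\varepsilon u_\varepsilon = \beta_\varepsilon(u_\varepsilon - \psi)$ where $\beta_\varepsilon$ approximates $-(\,\cdot\,)^-/\varepsilon$; uniform $L^\infty$ bounds and a uniform modulus of continuity are obtained by comparison with $\psi$ and its translates, and passing to the limit produces a viscosity solution of \eqref{prin}. Uniqueness follows from the standard comparison principle for the translation invariant operator, available because $\pazocal{I}$ is uniformly elliptic with respect to $\mathcal{L}_0$ and the first-order term has constant coefficients. Global Lipschitz continuity in space is then immediate, since an appropriate translate of $u$ corrected by a linear function of $|h|$ is an admissible supersolution whose initial datum exceeds $\psi$. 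For Lipschitz regularity in time, monotonicity $\partial_t u \geq 0$ follows by comparing $u(t,x)$ with $u(t+h,x)$, while the upper bound $\partial_t u \leq C$ comes from evaluating the operator on $\psi$, which is bounded thanks to $\psi \in W^{2,\infty}\cap C^2$ and the mapping property $\pazocal{I}\colon C^{2\sigma+\alpha} \to C^\alpha$ recorded in condition \eqref{K}.

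The heart of the argument is the optimal spatial regularity. I would first establish semiconvexity of $u$ in $x$, with constant controlled by $\|D^2\psi\|_\infty$: the convexity of $\pazocal{I}$, assumed precisely for this purpose, implies that $v(t,x) := \tfrac{1}{2}[u(t,x+h)+u(t,x-h)]$ is a subsolution of the obstacle problem with obstacle $\tfrac{1}{2}[\psi(x+h)+\psi(x-h)]$, so that comparison against $u + C|h|^2$ yields semiconvexity. Together with the Lipschitz bound and the observation that on the contact set $(-\Delta)^s u$ is controlled by the bounded quantity $b\cdot\nabla u + \pazocal{I}u + ru - \partial_t u$, a blow-up and classification argument in the spirit of Caffarelli--Salsa--Silvestre then yields $(-\Delta)^s u \in C^{1-s}_x$ up to the free boundary. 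The hypotheses $s > 1/2$ and $\sigma < s$ are essential here, since they guarantee that $b\cdot\nabla u$ and $\pazocal{I} u$ are of strictly lower order than $(-\Delta)^s u$ and can be absorbed into the right-hand side of the rescaled equations.

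Finally, time regularity would follow from the equation on $\{u>\psi\}$, since the right-hand side $-(-\Delta)^s u + b\cdot\nabla u + \pazocal{I} u + ru$ inherits spatial $C^{1-s}$ regularity, giving $\partial_t u \in C^{1-s}_x$. The natural parabolic scaling $t \sim |x|^{2s}$ upgrades this to Hölder exponent $(1-s)/(2s)$ in time for $(-\Delta)^s u$, while the $0^+$ loss for $\partial_t u$ arises from an interpolation step mixing the behaviour of the solution on the contact and non-contact regions. In my view the main obstacle will be step three: both the semiconvexity estimate and the blow-up classification must remain stable under the three lower-order perturbations, and tracking the precise way in which the conditions $s>1/2$ and $\sigma<s$ enter these arguments is where most of the technical work will go.
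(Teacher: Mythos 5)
Your overall architecture matches the paper's: penalization for existence, a comparison principle for uniqueness, semiconvexity from the convexity of $\pazocal{I}$, spatial Lipschitz bounds from translates, and monotonicity in time. Up to and including the Lipschitz and semiconvexity estimates, your sketch is essentially the argument the paper carries out. However, at the two hardest points the proposal has genuine gaps rather than merely omitted details.

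First, for the optimal spatial regularity you invoke ``a blow-up and classification argument in the spirit of Caffarelli--Salsa--Silvestre,'' but you never say what the blow-up limits are or how they would be classified, and this route faces a real obstruction: the operator $\partial_t+(-\Delta)^s-b\cdot\nabla-\pazocal{I}-r$ is not scale invariant, so rescalings do not converge to a clean model problem without substantial extra work. What is actually needed (and what the paper does) is a two-stage argument through the Caffarelli--Silvestre extension: first a dyadic decay iteration on $\inf_{\Gamma_{4^{-k}}} y^{a}\tilde v_{y}$ producing \emph{some} H\"older exponent $\alpha$ for $\big[(-\Delta)^s u-\pazocal{R}u\big]\chi_{\{u=\psi\}}$, and then an Almgren-type weighted monotonicity formula, combined with the fact that the convex hull of $\{w\ge r^{\alpha+\delta}\}\cap B_r$ misses the free boundary point and with the first eigenvalue $(1-s)(n-1+s)$ of the weighted spherical Laplacian, to iterate the exponent up to $1-s$. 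The drift and the operator $\pazocal{I}$ enter only through the H\"older modulus of $\pazocal{R}u$ (the exponent $\gamma=s-\max\{\sigma,1/2\}>0$, which is where $s>1/2$ and $\sigma<s$ are used); none of this is visible in your sketch, and without the monotonicity formula the claimed exponent $1-s$ does not follow.

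Second, the time regularity. The claim that ``the natural parabolic scaling $t\sim|x|^{2s}$ upgrades'' the spatial $C^{1-s}$ bound to exponent $\frac{1-s}{2s}$ in time is only a heuristic, and a single pass of the mollification argument in fact yields only the exponent $\frac{1-s}{1+s}$, which is strictly smaller than $\frac{1-s}{2s}$. One must bootstrap: assuming $\partial_t u\in C_t^{\alpha}$ one improves to $C_t^{\Phi(\alpha)}$ with $\Phi(\alpha)=(1+\alpha)\frac{1-s}{1+s}$, and the exponent $\frac{1-s}{2s}$ is reached only as the fixed point of $\Phi$ in the limit of infinitely many iterations --- this, not an ``interpolation step mixing contact and non-contact regions,'' is the origin of the $0^{+}$ loss for $\partial_t u$. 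Along the way one must also control the time increments of $\pazocal{I}u$ and $b\cdot\nabla u$ tested against mollifiers, which requires separate optimization in $r$ depending on whether $2\sigma\lessgtr 1$; your proposal does not address this.
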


We were unable to find the existence of solutions for the obstacle problem \eqref{prin} in the literature and we present a proof in \Cref{sect2}. Nevertheless, in the elliptic case, Petrosyan and Pop \cite{ellipticcase} prove existence and regularity results when $\pazocal{I}\equiv 0$, $b \in C^{s}(\mathbb{R}^{n};\mathbb{R}^{n})$, $r \in C^{s}(\mathbb{R}^{n})$ is negative bounded away from zero, and $\psi\in C^{3s}$.

Our regularity in time would be optimal if we did not have $0^{+}$ in the Hölder exponent of $\partial_{t} u$. It is natural to expect that
\[
\partial_t u\in C_{t,x}^{\frac{1-s}{2s},1-s}((0,T]\times\mathbb{R}^n);
\] however, this is unknown even for the fractional heat operator. In fact, the same type of regularity  of solutions has been addressed by Caffarelli and Figalli \cite{CaFi-08} when $b \equiv 0$, $\pazocal{I} \equiv 0$, and $r \equiv 0$. In the setting of \cite{CaFi-08}, the regularity of the free boundary for the obstacle problem has been investigated by Barrios, Figalli, and Ros-Oton \cite{fb}, where they show the free boundary is of class $C^{1,\alpha}$ in space-time. Their techniques, however, heavily depend on the scale invariance of the operator, do not readily extend to the general problem \eqref{prin}, and can be the subject of a future work.

\subsection*{Aknowledgements} Henrique is partially supported by CAPES through a Master's scholarship. Diego is partially supported by CNPq-Brazil through grant 311354/2019-0.

\section{Comparison results and first regularity estimates}\label{sect2}
We first recall the general definition of a viscosity solution for a nonlocal problem $\pazocal{L}u=f$, where $f$ is a bounded continuous function and $\pazocal{L}u= \partial_t u + (-\Delta)^{s} u - b \cdot \nabla u - \pazocal{I}u - ru$:
\begin{definition}\label{auxdefviscosity} An upper semicontinuous function $u$ on $(0,T]\times \mathbb{R}^n$ is a subsolution of $\pazocal{L}u=f$ at $(t_0,x_0)$  if for all functions $\phi\in C^{1,2}(\overline{B_R(t_0,x_0)})$ such that $0=(u-\phi)(t_0,x_0)> (u-\phi)(t,x)$ for all $(t,x)\in B_R(t_0,x_0)\setminus\{(t_0,x_0)\}$ for some $R>0$, the function
\begin{equation}\label{auxiliarsub}
v(t,x)\coloneqq \begin{cases}
\phi(t,x) \text{ in } B_R(t_0,x_0);\\
u(t,x) \text{ at } (0,T]\times \mathbb{R}^n \setminus B_R(t_0,x_0)
\end{cases}
\end{equation}
satisfies $\pazocal{L}v (t_0,x_0)\leq f(t_0,x_0)$.

Analogously, a lower semicontinuous function $u$ on $(0,T]\times \mathbb{R}^n$ is a supersolution of $\pazocal{L}u=f$ at $(t_0,x_0)$  if for all functions $\varphi\in C^{1,2}(\overline{B_R(t_0,x_0)})$ such that $0=(u-\varphi)(t_0,x_0)< (u-\varphi)(t,x)$ for all $(t,x)\in B_R(t_0,x_0)\setminus\{(t_0,x_0)\}$ for some $R>0$, the function
\begin{equation}\label{auxiliarsuper}
v(t,x)\coloneqq \begin{cases}
\varphi(t,x) \text{ in } B_R(t_0,x_0);\\
u(t,x) \text{ at } (0,T]\times \mathbb{R}^n \setminus B_R(t_0,x_0)
\end{cases}
\end{equation}
satisfies $\pazocal{L}v (t_0,x_0)\geq f(t_0,x_0)$.

A solution of $\pazocal{L}u=f$ is a continuous function that is both a subsolution and a supersolution for all $(t,x)\in(0,T]\times\mathbb{R}^n$.
\end{definition}
Observe the definition of the auxiliar function $v$ is necessary due to the nonlocal nature of the operators $(-\Delta)^s$ and $\pazocal{I}$.

 We now give the notion of a continuous viscosity solution of the obstacle problem \eqref{prin}:

\begin{definition}\label{defviscosity} An upper semicontinuous, bounded function $u$ on $(0,T]\times \mathbb{R}^n$ is a subsolution of \eqref{prin} if $\pazocal{L}u(t,x)\leq 0$ in the viscosity sense, for all $(t,x)\in(0,T]\times\mathbb{R}^n$ such that $u(t,x)>\psi(x)$, and $u(0,\cdot)\leq \psi$.

Analogously, a lower semicontinuous, bounded function $u$ on $(0,T]\times \mathbb{R}^n$ is a supersolution of \eqref{prin} if $u(t,\cdot)\geq \psi$ for all $t\in(0,T]$, $\pazocal{L}u(t,x)\geq 0$ in viscosity sense for all $(t,x)\in(0,T]\times\mathbb{R}^n$, and $u(0,\cdot)\geq \psi$.

A solution of \eqref{prin} is a bounded continuous function that is both a subsolution and a supersolution.
\end{definition}
We remark that the definitions of subsolution and supersolution are not symmetric. Moreover, we can relax \Cref{defviscosity} by dropping the hypothesis $u(t,\cdot)\geq \psi$ (but assuming that a supersolution also has an empty semi-jet set, see \cite[Definition 2]{viscositydefinition} for the classical case).

Since the fractional Laplacian is the leading term, we now define a lower order operator $\pazocal{R}$ as 
\[
\pazocal{R}u(t,x)\coloneqq (\pazocal{I}+b\cdot \nabla +r)u(t,x) = \pazocal{I}u(t,x)+b\cdot \nabla u(t,x) +ru(t,x).
\] 
We now prove the existence, uniqueness and regularity of solutions of equation \eqref{eq:appendix} below (see \Cref{supremum}, \Cref{schauder}, and \Cref{regularityspacetime}). We need these tools in order to prove existence, uniqueness and regularity of solutions for a penalized equation, see \eqref{aprox}. Although the techniques are fairly standard, we were unable to find these results elsewhere, and we present details here for the sake of completeness.

\lemma[Uniqueness]\label{supremum} Assume $\psi$, $b$, $r$, and $\pazocal{I}$ satisfy \eqref{psi}, \eqref{defb}, \eqref{defr}, and \eqref{K}, respectively, and let $\alpha,\beta\in (0,1)$. For continuous functions $f\in L^\infty((0,T]\times\mathbb{R}^n)$ and $ u\in L^\infty((0,T]\times\mathbb{R}^n)\cap C^{1,2}_{t,x}((0,T]\times\mathbb{R}^n)$, which satisfy
\begin{equation}\label{eq:appendix}
\begin{split}
\partial_t u+(-\Delta)^su-\pazocal{R}u=f & \ \text{ in } \ (0,T]\times\mathbb{R}^n;\\
u(0,\cdot)=\psi & \quad \text{on } \mathbb{R}^n,
\end{split}
\end{equation} we have
\[
\|u\|_{L^\infty((0,T]\times\mathbb{R}^{n})}\leq C \Big( \|f\|_{L^\infty((0,T]\times\mathbb{R}^n)}+\|\psi\|_{L^\infty(\mathbb{R}^n)}\Big),
\]
where $C=C(r,T)$. In particular, the solution is unique.
\proof
It suffices to prove the first claim for functions $u$ for which
\[
\Big(\partial_t+(-\Delta)^s-M^+_{\mathcal{L}_0}-b\cdot\nabla-r\Big)u\leq f\leq \left(\partial_t+(-\Delta)^s-M^-_{\mathcal{L}_0}-b\cdot\nabla-r\right) u.
\]
The result follows by noticing that
\[\begin{split}
\left(\partial_t+(-\Delta)^s-M^-_{\mathcal{L}_0}-b\cdot\nabla+\gamma-r\right)\left(\pm e^{-\gamma t}u+\|f\|_{L^\infty(\mathbb{R}^n)} \right) & \geq \pm e^{-\gamma t}f+(\gamma-r)\|f\|_{L^\infty(\mathbb{R}^n)}\\
&\geq e^{-\gamma t}\big(\pm f + e^{\gamma t} \|f\|_{L^\infty(\mathbb{R}^n)}\big)\geq 0
\end{split}\]
where $\gamma\coloneqq r+1$. Thus, by the minimum principle,
\[
\pm e^{-\gamma t}u+\|f\|_{L^\infty(\mathbb{R}^n)}\geq -\max_{\{t=0\}\times \mathbb{R}^n}(\pm e^{-\gamma t}u+\|f\|_{L^\infty(\mathbb{R}^n)})^-=-\max_{\mathbb{R}^n}(\pm\psi+\|f\|_{L^\infty(\mathbb{R}^n)})^-,
\]
which gives
\[
\|u\|_{L^\infty((0,T]\times\mathbb{R}^{n})}\leq e^{\gamma T}(\|f\|_{L^\infty(\mathbb{R}^n)}+\max_{\mathbb{R}^n}(\pm\psi+\|f\|_{L^\infty(\mathbb{R}^n)})^-)\leq 2e^{\gamma T}(\|f\|_{L^\infty(\mathbb{R}^n)}+\|\psi\|_{L^\infty(\mathbb{R}^n)}).
\]
Now, if $u$ and $v$ satisfy \eqref{eq:appendix}, then
\begin{equation}\label{uniqueness}
\begin{split}
&(\partial_t+(-\Delta)^s-b\cdot\nabla-r)(u-v)+\pazocal{I}v-\pazocal{I}u=0;\\
&(u-v)(0,\cdot)= 0.
\end{split}
\end{equation}
By the ellipticity of $\pazocal{I}$ (see \eqref{propertyI}), we have
\[\begin{split}
&(\partial_t+(-\Delta)^s-M^-_{\mathcal{L}_0}-b\cdot\nabla+\gamma-r)e^{-\gamma t}(u-v)\geq 0\\
&(\partial_t+(-\Delta)^s-M^+_{\mathcal{L}_0}-b\cdot\nabla+\gamma-r)e^{-\gamma t}(u-v)\leq 0\\
&(u-v)(0,\cdot)= 0.
\end{split}\]
By minimum and maximum principles, respectively, we conclude $u\equiv v$.
\endproof

Next, we need a regularity result for the fractional heat equation. Namely, by \cite[Theorems 2.3 and 3.1]{higherparabolicreg}, if $v$ satisfies
\[
\partial_tv+(-\Delta)^sv=f
\]
with $f\in C^{\alpha,\beta}_{t,x}((0,T];\mathbb{R}^n)$, $\alpha,\beta\in(0,1)$, then
\begin{equation}\label{regalphabeta}
\|\partial_t v\|_{C^{\alpha,\beta}_{t,x}((0,T];\mathbb{R}^n)}+\|(-\Delta)^s v\|_{C^{\alpha,\beta}_{t,x}((0,T];\mathbb{R}^n)}\leq C\Big(1+\|f\|_{C^{\alpha,\beta}_{t,x}((0,T];\mathbb{R}^n)}\Big).
\end{equation}
Moreover, since $\pazocal{R}$ is a lower order operator with respect to $(-\Delta)^s$, we can perform an interpolation inequality. Recall that $\max\{1,2\sigma\}<2s$. Given  a bounded function $u$, by classical H\"{o}lder interpolation inequalities (see, for instance, \cite[Lemma 6.32]{GiTr-01} and/or \cite[Propositions 2.1.8 and 2.1.9]{silvestre}), we have that
\[\begin{split}
\|\pazocal{R}u\|_{C^{\alpha}(\mathbb{R}^n)}\leq \|u\|_{C^{\alpha+\max\{1,2\sigma\}}(\mathbb{R}^n)}&\leq \epsilon \|u\|_{C^{\alpha+2s}(\mathbb{R}^n)}+C_\epsilon\|u\|_{L^\infty(\mathbb{R}^n)}\\
&\leq \epsilon\|(-\Delta)^su\|_{C^{\alpha}(\mathbb{R}^n)}+C_\epsilon\|u\|_{L^\infty(\mathbb{R}^n)};\\
\|\pazocal{R}u\|_{L^\infty(\mathbb{R}^n)}\leq \|u\|_{C^{\max\{1,2\sigma+0^+\}}(\mathbb{R}^n)}&\leq \epsilon \|u\|_{C^{2s-0^+}(\mathbb{R}^n)}+C_\epsilon\|u\|_{L^\infty(\mathbb{R}^n)},
\end{split}\]
for all $\epsilon>0$  and $\alpha\in(0,1)$. We now prove a priori estimates for classical solutions of \eqref{eq:appendix} (see \cite[Lemma 2.6]{ellipticcase} for a proof in the elliptic case).
\lemma[A priori Schauder estimates]\label{schauder} Assume $\psi$, $b$, $r$, and $\pazocal{I}$ as in \eqref{psi}, \eqref{defb}, \eqref{defr}, and \eqref{K}, respectively. Then, there exists a constant $C(n,s,\lambda,\Lambda,\sigma,T,r,b)$ such that for any $f\in C^{\alpha,\beta}_{t,x}$ and $u \in C^{1+\alpha,2}_{t,x}$ bounded functions which satisfy
\[\begin{split}
\partial_t u +(-\Delta)^s u-\pazocal{R}u=f & \quad  \text{on } (0,T]\times\mathbb{R}^n,\\
u(0,\cdot)=\psi & \quad \text{on } \mathbb{R}^n,
\end{split}\]
we have the estimate
\[
\|\partial_t u\|_{C^{\alpha,\beta}((0,T]\times\mathbb{R}^n)}+\|(-\Delta)^s u\|_{C^{\alpha,\beta}((0,T]\times\mathbb{R}^n)}\leq C \Big( \|\psi\|_{C^2(\mathbb{R}^n)}+\|f\|_{C^{\alpha,\beta}((0,T]\times\mathbb{R}^n)}\Big).
\]
In particular, we have $u(t,\cdot)\in C^{2s+\beta}$.
\proof We first notice that $v\coloneqq u-\psi\in C^{1+\alpha,2}_{t,x}$ satisfies
\[\begin{split}
&(\partial_t+(-\Delta)^s-M^+_{\mathcal{L}_0}-b\cdot\nabla-r)v\leq \tilde{f}\leq (\partial_t+(-\Delta)^s-M^-_{\mathcal{L}_0}-b\cdot\nabla-r)v;\\
&v(0,\cdot)=0,
\end{split}\]
where $\tilde{f}\coloneqq f-((-\Delta)^s-\pazocal{R})\psi$. By \eqref{regalphabeta}, there exists a constant $C$ such that
\[
\|\partial_t v\|_{C^{\alpha,\beta}((0,T]\times\mathbb{R}^n)}+\|(-\Delta)^s v\|_{C^{\alpha,\beta}((0,T]\times\mathbb{R}^n)}\leq C(1+\|\partial_tv+(-\Delta)^sv\|_{C^{\alpha,\beta}((0,T]\times\mathbb{R}^n)}).
\]
By \Cref{supremum}, we have $\|v\|_{L^\infty((0,T]\times\mathbb{R}^n)}\leq C\|\tilde{f}\|_{L^\infty((0,T]\times\mathbb{R}^n)}$ for a constant $C=C(T,r)$. Moreover, by interpolation inequalities, for all $\epsilon>0$, there exists $C_\epsilon=C(\epsilon,n,s,\lambda,\Lambda,\sigma,T,r,b)$ such that
\[
(1-\epsilon)\left(\|\partial_t v\|_{C^{\alpha,\beta}((0,T]\times\mathbb{R}^n)}+\|(-\Delta)^s v\|_{C^{\alpha,\beta}((0,T]\times\mathbb{R}^n)}\right)\leq C_\epsilon\|\tilde{f}\|_{C^{\alpha,\beta}((0,T]\times\mathbb{R}^n)},
\]
hence the estimative follows by choosing $\epsilon=1/2$. By the regularity of the fractional Laplacian, see \cite{silvestre}, we have $u(t,\cdot)\in C^{2s+\beta}$.
\endproof
We use the previous results to prove the existence and uniqueness of solutions of \eqref{eq:appendix}.
\lemma\label{regularityspacetime} In the same setting as above, there exists a unique bounded solution $u \in C^{1+\alpha,2s+\beta}_{t,x}$ of \eqref{eq:appendix} for $\alpha\in (0,1)$ and $\beta\in (2-2s,1)$, with the bound
\begin{equation}\label{schauderpsi}
\|\partial_t u\|_{C^{\alpha,\beta}((0,T]\times\mathbb{R}^n)}+\|(-\Delta)^s u\|_{C^{\alpha,\beta}((0,T]\times\mathbb{R}^n)}\leq C\Big(\|f\|_{C^{\alpha,\beta}((0,T]\times\mathbb{R}^n)}+\|\psi\|_{C^2(\mathbb{R}^n)}\Big).
\end{equation}
\proof Uniqueness and boundedness follow from \Cref{supremum}. We first assume that $\psi\in C_c^{\infty}$ and $f\in C^{\infty}$, with compact support in space. We define the operator $\pazocal{L}_0$ as the fractional heat operator, that is, $\pazocal{L}_0\coloneqq \partial_t+(-\Delta)^s$. We claim that a solution of
\begin{equation}\label{toymodel}\begin{split}
&\pazocal{L}_0u=f \text{ on } (0,T]\times\mathbb{R}^n,\\
&u(0,\cdot)=\psi,
\end{split}
\end{equation}
is smooth with compact support in space. Indeed, denoting $\mathcal{F}u$ the Fourier transform of $u$ in space, we have
\[
u(t,x)\coloneqq \mathcal{F}^{-1}\left(e^{-|\xi|^{2s}t}\mathcal{F}\psi\right)+\mathcal{F}^{-1}\left(\int_0^t e^{-|\xi|^{2s}(t-s)}\mathcal{F}f (s, \xi) \, \dd s 
\right).
\]
By the regularity of $\psi$ and $f$, we obtain $u\in C^\infty$, with compact support in space, concluding the claim. By \Cref{supremum} and \Cref{schauder}, we obtain \eqref{schauderpsi} for $\pazocal{L}_0$.

Now, note that functions $f\in C^{\alpha,\beta}_{t,x}$ and $\psi\in C^2$ can be approximated by $\{f_k\}_{k\geq 0}\subset C^\infty$, with compact support in space, and $\{\psi_k\}_{k\geq 0}\subset C_c^\infty$, respectively. More precisely, we have $f_k\longrightarrow f$ and $\psi_k\longrightarrow \psi$ pointwise, and the sequences are uniformly bounded. Define $u_k \in C^{1+\alpha,2s+\beta}_{t,x}$ (vanishing as $|x|\longrightarrow \infty$) as a solution of
\[\begin{split}
&\pazocal{L}_0u_k=f_k \text{ on } (0,T]\times\mathbb{R}^n,\\
&u_k(0,\cdot)=\psi_k.
\end{split}\]
By the Arzelá-Ascoli Theorem, we obtain a subsequence $u_{k_j}\longrightarrow u$ in $ C^{1+\alpha,2s+\beta}_{t,x}$; thus, $u$ is a solution of \eqref{toymodel}. By assumptions \eqref{defb}, \eqref{defr}, and \eqref{K}, the operator $\pazocal{L}: C^{1+\alpha,2s+\beta}_{t,x}\longrightarrow C^{\alpha,\beta}_{t,x}$ is well defined. Now, we proceed by the continuity method: we write $\pazocal{L}_t= \pazocal{L}_0-t\pazocal{R}$. By \Cref{supremum}, $\pazocal{L}$ is an injective operator. Since we have proven that $\pazocal{L}_0$ is a surjective operator, we conclude that $\pazocal{L}_0$ is a bijective operator, and the inverse $\pazocal{L}_0^{-1}$ is well-defined. Hence, 
\[
\pazocal{L}_t u=f \iff u=\pazocal{L}^{-1}_0(f+t\pazocal{R}u)\eqqcolon \pazocal{S}_0u.
\]
If we show that $\pazocal{S}_0$ is a contraction map, we have that $\pazocal{L}$ is bijective and the claim will be proven. Indeed, by \Cref{supremum} and \Cref{schauder}, for any $u,\,v\in C^{1+\alpha,2s+\beta}_{t,x}$ such that $(u-v)(0,\cdot)\equiv 0$, we conclude
\[
\|\pazocal{S}_0u-\pazocal{S}_0v\|_{C^{1+\alpha,2s+\beta}((0,T]\times\mathbb{R}^n)}\leq t\,C\,\|\pazocal{R}u-\pazocal{R}v\|_{C^{\alpha,\beta}((0,T]\times\mathbb{R}^n)}
\]
where $C$ is a universal constant. Now, by the regularity of $u$ and $v$, we have
\begin{equation}\label{holderregofR}
t\,C\,\|\pazocal{R}u-\pazocal{R}v\|_{C^{\alpha,\beta}((0,T]\times\mathbb{R}^n)}\leq t\,C_0\,\|u-v\|_{C^{\alpha,\beta+\max\{1,2\sigma\}}((0,T]\times\mathbb{R}^n)},
\end{equation}
where $C_0$ does not depend on $t$. Since $\max\{1,2\sigma\}<2s$, we obtain $\pazocal{S}_0$ is a contraction map for $t_0<C_0^{-1}$. Hence, $\pazocal{L}_{t_0}$ is bijective, and the lemma follows by iterating the same argument for the map $\pazocal{S}_{t_0}u\coloneqq \pazocal{L}^{-1}_{t_0}(f+(t-t_0)\pazocal{R}u)$.
\endproof

Once the Hölder regularity of solutions of \eqref{eq:appendix} is established, we can prove the existence of solutions to the penalized equation 
\begin{equation}\label{aprox}
\begin{cases}
 u^\epsilon_{t} + (-\Delta)^{s} u^\epsilon - b \cdot \nabla u^\epsilon -\pazocal{I}u^\epsilon - ru^\epsilon= \beta_\epsilon( u^\epsilon - \psi^\epsilon) & \text{ in } (0,T] \times \mathbb{R}^{n}; \\
u^\epsilon(0,x)=\psi^\epsilon(x) & \text{ in } \mathbb{R}^{n}.
\end{cases}
\end{equation}
\lemma\label{regularityofuepsilon} Assume that $\psi$, $b$, $r$, and $\pazocal{I}$ as in \eqref{psi}, \eqref{defb}, \eqref{defr}, and \eqref{K}, respectively. Then, there exists a solution $u^\epsilon\in C^{1+\alpha,2s+\beta}_{t,x}$ to the penalized problem \eqref{aprox}, where $\alpha\in(0,1)$ and $\beta\in(2-2s,2)$.
\proof Let $\psi^\epsilon\coloneqq \eta_\epsilon\ast \psi$, where $\eta_\epsilon$ is the standard mollifier and $\epsilon>0$. 
We construct $u_k \in C^{1+\alpha,2s+\beta}_{t,x}$ iteratively as the unique solution to the equation
\begin{equation}\label{defuk}\begin{split}
&\pazocal{L} u_k= \beta_\epsilon(u_{k-1}-\psi^\epsilon) \quad \text{ on } (0,T]\times \mathbb{R}^n,\\
&u_k(0,\cdot)=\psi^\epsilon,
\end{split}
\end{equation}
where $u_0\equiv 0$. Indeed, for $k=1$, $u_1\in C^{1+\alpha,2s+\beta}_{t,x}$ since $\beta_\epsilon(-\psi^\epsilon)\in C^\infty$. Assuming the regularity holds for $u_{k-1}$, $u_k\in C^{1+\alpha,2s+\beta}_{t,x}$ since $\beta_\epsilon(u_{k-1}-\psi^\epsilon)\in C^{1+\alpha,2s+\beta}_{t,x}$ (by the regularity of $u_{k-1}$ and $\psi^\epsilon$).

By \eqref{schauderpsi} and the regularity above, we have for $k\geq 1$
\begin{equation}\label{mainregularityuk}
\begin{split}
\|\partial_t u_k\|_{C^{\alpha,\beta}((0,T]\times\mathbb{R}^n)}+&\|(-\Delta)^s u_k\|_{C^{\alpha,\beta}((0,T]\times\mathbb{R}^n)} \\
&\leq C\Big(\|\beta_\epsilon(u_{k-1}-\psi^\epsilon)\|_{C^{\alpha,\beta}((0,T]\times\mathbb{R}^n)}+\|\psi^\epsilon\|_{C^2(\mathbb{R}^n)}\Big).
\end{split}
\end{equation}

We now claim that, for $k\geq 1$
\begin{equation}\label{boundgammauk}
\begin{split}
&\|\beta_\epsilon(u_{k-1}-\psi^\epsilon)\|_{L^\infty((0,T]\times\mathbb{R}^n)}\leq C_\epsilon,\\
&\|\beta_\epsilon(u_{k-1}-\psi^\epsilon)\|_{C^{\alpha,\beta}((0,T]\times\mathbb{R}^n)}\leq C_\epsilon(1+\|u_{k-1}\|_{C^{\alpha,\beta}((0,T]\times\mathbb{R}^n)}),
\end{split}
\end{equation}
where $C_\epsilon$ depends on $\epsilon$ (but does not depend on $k$). Indeed, for $k=1$,
\[\begin{split}
&\|\beta_\epsilon(u_{0}-\psi^\epsilon)\|_{L^\infty((0,T]\times\mathbb{R}^n)}\leq e^{\epsilon^{-1}\|\psi^\epsilon\|_{L^\infty(\mathbb{R}^n)}}\leq C_\epsilon;\\
&\|\beta_\epsilon(u_{0}-\psi^\epsilon)\|_{C^{\alpha,\beta}((0,T]\times\mathbb{R}^n)}\le \frac{1}{\epsilon}\|\beta_\epsilon(u_{0}-\psi^\epsilon)\|_{L^\infty((0,T]\times\mathbb{R}^n)}\|\psi^\epsilon\|_{C^{\beta}(\mathbb{R}^n)}\leq C_\epsilon.
\end{split}\]
Now, suppose that \eqref{boundgammauk} holds for $k\geq 2$. Then by \Cref{supremum}, we have
\[
\|\beta_\epsilon(u_{k}-\psi^\epsilon)\|_{L^\infty((0,T]\times\mathbb{R}^n)}\leq e^{\epsilon^{-1}(\|\psi^\epsilon\|_{L^\infty(\mathbb{R}^n)}+\|\beta_\epsilon(u_{k-1}-\psi^\epsilon)\|_{L^\infty((0,T]\times\mathbb{R}^n)})}\leq C_\epsilon;\]
\[\begin{split}
\|\beta_\epsilon(u_{k}-\psi^\epsilon)\|_{C^{\alpha,\beta}((0,T]\times\mathbb{R}^n)}
&\leq \frac{1}{\epsilon}\|\beta_\epsilon(u_{k}-\psi^\epsilon)\|_{L^\infty((0,T]\times\mathbb{R}^n)}\|u_{k}-\psi^\epsilon\|_{C^{\alpha,\beta}((0,T]\times\mathbb{R}^n)}\\
&\leq C_\epsilon(1+\|u_{k}\|_{C^{\alpha,\beta}((0,T]\times\mathbb{R}^n)}).
\end{split}\]
Hence, the claim follows. Combining \eqref{mainregularityuk} and \eqref{boundgammauk}, we have
\[
\|\partial_t u_k\|_{C^{\alpha,\beta}((0,T]\times\mathbb{R}^n)}+\|(-\Delta)^s u_k\|_{C^{\alpha,\beta}((0,T]\times\mathbb{R}^n)}
\leq C_\epsilon(1+\|u_{k-1}\|_{C^{\alpha,\beta}((0,T]\times\mathbb{R}^n)}).
\]
We finally claim that
\begin{equation}\label{bounduk-1}
\|u_{k-1}\|_{C^{\alpha,\beta}((0,T]\times\mathbb{R}^n)}\leq C_\epsilon.
\end{equation}
Once the claim is proved, we have a uniform (with respect to $k$) bound of 
\[
\|\partial_t u_k\|_{C^{\alpha,\beta}((0,T]\times\mathbb{R}^n)}+\|(-\Delta)^s u_k\|_{C^{\alpha,\beta}((0,T]\times\mathbb{R}^n)}.
\]
The claim follows by the regularity of the fractional heat equation with a bounded source term (see \eqref{fbounded}):
\[\begin{split}
\|u_{k-1}\|_{C^{1-0^+}((0,T];L^\infty(\mathbb{R}^n))}+&\|u_{k-1}\|_{L^\infty((0,T];C^{2s-0^+}(\mathbb{R}^n))}\\
&\leq C\Big(\|(\partial_t+(-\Delta)^s) u_{k-1}\|_{L^\infty((0,T]\times\mathbb{R}^n)}+\|u_{k-1}\|_{L^\infty((0,T]\times\mathbb{R}^n)}\Big).
\end{split}\]
By interpolation inequalities (see, for instance, \cite[Lemma 6.32]{GiTr-01}) and \Cref{supremum}, we obtain, for a constant $C$ depending on $n,s,\lambda,\Lambda,\sigma,T,r,b,\|\psi\|_{C^2(\mathbb{R}^n)}$, that
\[
\|u_{k-1}\|_{C^{1-0^+}((0,T];L^\infty(\mathbb{R}^n))}+\|u_{k-1}\|_{L^\infty((0,T];C^{2s-0^+}(\mathbb{R}^n))}\leq C(1+\|\beta_\epsilon(u_{k-1}-\psi^\epsilon)\|_{L^\infty((0,T]\times\mathbb{R}^n)}).
\]
Hence, by \eqref{boundgammauk}, we conclude our claim.

Now, by the uniform bound of $\{u_k\}_{k\geq0}$ in $C^{1+\alpha,2s+\beta}_{t,x}$, we have a convergent subsequence in compact subsets of $(0,T]\times\mathbb{R}^n$ in $C^{1+\alpha,2s+\beta}_{t,x}$ to a function $u^\epsilon \in C^{1+\alpha,2s+\beta}_{t,x}((0,T]\times\mathbb{R}^n)$. Moreover, we have 
\[
\pazocal{L} u_k \rightarrow \pazocal{L} u_\epsilon \quad \text{and} \quad \beta(u_k-\psi^\epsilon)\rightarrow \beta_\epsilon(u^\epsilon-\psi^\epsilon) \quad \text{ as } k\rightarrow \infty.
\]
Hence, we conclude
\[\begin{split}
&\pazocal{L} u^\epsilon=\beta_\epsilon(u^\epsilon-\psi^\epsilon)\quad \text{on } (0,T]\times\mathbb{R}^n,\\
&u^\epsilon(0,\cdot)=\psi^\epsilon. \qedhere
\end{split}\]
\endproof
We now prove a uniform bound of $\beta_\epsilon(u^\epsilon-\psi^\epsilon)$ with respect to $\epsilon$, which combined with \Cref{supremum}, gives a uniform bound of $u^\epsilon$.
\lemma\label{uniformbound} Assume that $\psi$, $b$, $r$, and $\pazocal{I}$ as in \eqref{psi}, \eqref{defb}, \eqref{defr}, and \eqref{K}, respectively. Then, there exists a constant $C = C(n,s,\sigma,\lambda,\Lambda,T,\|\psi\|_{C^2(\mathbb{R}^n)},b,r)>0$ such that
\begin{equation}\label{boundbeta}\begin{split}
&\|\beta_\epsilon(u^\epsilon-\psi^\epsilon)\|_{L^\infty((0,T]\times\mathbb{R}^n)}\leq C\\
&\|u^\epsilon\|_{L^\infty((0,T]\times\mathbb{R}^n)}\leq C.
\end{split}
\end{equation}
\proof
We remark that we only need an upper bound, since $\beta_\epsilon(u^\epsilon-\psi^\epsilon)\geq0$. We assume for $\gamma\geq 0$
\[
\inf_{(0,T]\times \mathbb{R}^n}(e^{-\gamma t} u^\epsilon - \psi^\epsilon)<0,
\] for otherwise $u^\epsilon\geq e^{\gamma t}\psi^\epsilon\geq \psi^\epsilon$, hence $\beta_\epsilon(u^\epsilon-\psi^\epsilon)\leq 1$. Take $\varphi$ a nonnegative smooth function that grows as $|x|^\sigma$ at infinity. Now, we claim that for $\delta>0$ sufficiently small, we have
\begin{equation}\label{negativeminimum}
\min_{(0,T]\times \mathbb{R}^n}\left(e^{-\gamma t} u^\epsilon-\psi^\epsilon+\frac{\delta}{T-t}+\delta \varphi\right)<0,
\end{equation}
and the minimum is a interior point of $(0,T]\times \mathbb{R}^n$. Indeed, since $u^\epsilon$ is bounded\footnote{By \Cref{supremum} and taking the limit $k\rightarrow \infty$ at \eqref{boundgammauk}, we conclude $\|u^\epsilon\|_{L^\infty((0,T]\times \mathbb{R}^n)}\leq C_\epsilon$. However, we do not have a uniform bound with respect to $\epsilon$.}, we may take $\delta$ small enough so that we can consider $(t^\epsilon_\delta,x^\epsilon_\delta)$ the minimizer of $e^{-\gamma t} u^\epsilon-\psi^\epsilon+\frac{\delta}{T-t}+\delta \varphi$. Now, since $\inf_{(0,T]\times \mathbb{R}^n}(e^{-\gamma t} u^\epsilon - \psi^\epsilon)<0$, we may assume \eqref{negativeminimum} (taking $\delta$ smaller if necessary). To prove that the minimizer is at the interior, we remark that the function blows up as $|x|\rightarrow \infty$ and $t\rightarrow T^-$. Moreover, if the minimum were at $t=0$, then
\[
0<\delta\left(\frac{1}{T}+\inf_{\mathbb{R}^n}\varphi\right)=\inf_{(0,T]\times \mathbb{R}^n}\left(e^{-\gamma t} u^\epsilon-\psi^\epsilon+\frac{\delta}{T-t}+\delta \varphi\right)<0.
\] 
Hence, the claim is proven. Thus,
 \[
\partial_tu^\epsilon(t^\epsilon_\delta,x^\epsilon_\delta)-\gamma u^\epsilon(t^\epsilon_\delta,x^\epsilon_\delta)+\frac{\delta e^{\gamma t_\delta^\epsilon}}{(T-t^\epsilon_\delta)^2}=0, \qquad \nabla u^\epsilon(t^\epsilon_\delta,x^\epsilon_\delta)-e^{\gamma t_\delta^\epsilon}\nabla\psi^\epsilon(x^\epsilon_\delta)+e^{\gamma t_\delta^\epsilon}\delta\nabla\varphi(x^\epsilon_\delta)=0,
\]
\[
\text{ and } \quad (-\Delta)^s u^\epsilon(t^\epsilon_\delta,x^\epsilon_\delta)-e^{\gamma t_\delta^\epsilon}(-\Delta)^s\psi^\epsilon(x^\epsilon_\delta)+\delta \;e^{\gamma t_\delta^\epsilon}(-\Delta)^s\varphi(x^\epsilon_\delta)\leq0.
\]
Furthermore, by \eqref{propertyI}, we have
\[
-\pazocal{I}u^\epsilon(t^\epsilon_\delta,x^\epsilon_\delta)\leq -M^-_{\mathcal{L}_0} u^\epsilon(t^\epsilon_\delta,x^\epsilon_\delta)\leq e^{\gamma t_\delta^\epsilon}(\delta M^+_{\mathcal{L}_0}\varphi(x^\epsilon_\delta)-M^-_{\mathcal{L}_0}\psi^\epsilon(x^\epsilon_\delta)).
\]
Hence, choosing $\gamma\coloneqq r$, we have
\[\begin{split}
\beta_\epsilon( u^\epsilon - \psi^\epsilon)(t^\epsilon_\delta,x^\epsilon_\delta) & \leq e^{\gamma t_\delta^\epsilon}\Bigg( -\frac{\delta }{(T-t^\epsilon_\delta)^2}- b\cdot\nabla\psi^\epsilon(x^\epsilon_\delta)+\delta\;b\cdot\nabla\varphi(x^\epsilon_\delta)+(-\Delta)^s\psi^\epsilon(x^\epsilon_\delta)\\
&-\delta \;(-\Delta)^s\varphi(x^\epsilon_\delta)+\delta\; M^+_{\mathcal{L}_0}\varphi(x^\epsilon_\delta)-M^-_{\mathcal{L}_0}\psi^\epsilon(x^\epsilon_\delta)+(\gamma-r)\psi^\epsilon(x^\epsilon_\delta)\\
&-(\gamma-r) \frac{\delta}{(T-t^\epsilon_\delta)^2}-\delta(\gamma-r) \varphi(x^\epsilon_\delta) \Bigg)\le C+O(\delta),
\end{split}\]
where $C(n,s,\sigma,\lambda,\Lambda,T,b,r,\|\psi\|_{C^2(\mathbb{R}^n)})>0$. Since \[(u^\epsilon - \psi^\epsilon)(t^\epsilon_\delta,x^\epsilon_\delta)\longrightarrow \inf_{(0,T]\times\mathbb{R}^n}(u^\epsilon - \psi^\epsilon)\] as $\delta\rightarrow 0$ and $\beta_\epsilon$ is decreasing, we obtain 
\[
\sup_{(0,T]\times\mathbb{R}^n}\beta_\epsilon(u^\epsilon - \psi^\epsilon)  =\lim_{\delta\rightarrow 0}\beta_\epsilon(u^\epsilon - \psi^\epsilon)(t^\epsilon_\delta,x^\epsilon_\delta) \leq C\|\psi\|_{C^2(\mathbb{R}^n)}.
\]
For the second inequality in \eqref{boundbeta}, by \Cref{supremum} and the previous result, we conclude
\[
\|u^\epsilon\|_{L^\infty((0,T]\times\mathbb{R}^n)}\leq C(\|\beta_\epsilon(u^\epsilon-\psi^\epsilon)\|_{L^\infty((0,T]\times\mathbb{R}^n)}+\|\psi\|_{L^\infty(\mathbb{R}^n)})\leq C\|\psi\|_{C^2(\mathbb{R}^n)}. \qedhere
\]
\endproof
We finally prove that $u^\epsilon$, a solution of \eqref{aprox}, converges to $u$, a solution of \eqref{prin}.
\theorem[Approximation by Penalization Method]\label{existenceproof}  Assume that $\psi$, $b$, $r$, and $\pazocal{I}$ as in \eqref{psi}, \eqref{defb}, \eqref{defr}, and \eqref{K}, respectively. Then, there exists a viscosity solution of \eqref{prin} which is an approximation of a solution $u^\epsilon$ of \eqref{aprox}, i.e., $u^\epsilon\longrightarrow u$ as $\epsilon\longrightarrow 0^+$, and $u\in C^{1-0^+}((0,T];L^\infty(\mathbb{R}^n))\cap L^\infty((0,T];C^{2s-0^+}(\mathbb{R}^n))$.
\proof We know by \Cref{regularityofuepsilon} that, for each $\epsilon>0$, $u^\epsilon$ is a $C^{1+\alpha,2s+\beta}_{t,x}$ function. By \eqref{fbounded} and interpolation inequalities (see, for instance, \cite[Lemma 6.32]{GiTr-01}), we have
\[
\|u^\epsilon\|_{C^{1-0^+}((0,T];L^\infty(\mathbb{R}^n))}+\|u^\epsilon\|_{L^\infty((0,T];C^{2s-0^+}(\mathbb{R}^n))}\leq C(\|\beta_\epsilon(u^\epsilon-\psi^\epsilon)\|_{L^\infty((0,T]\times\mathbb{R}^n)}+\|u^\epsilon\|_{L^\infty((0,T]\times\mathbb{R}^n)}).
\]
By \Cref{uniformbound}, we conclude that $u^\epsilon\longrightarrow u$ in both $C^{1-0^+}_t;L^\infty_x$ and
$L^\infty_t;C^{2s-0^+}_x$ norms as $\epsilon\longrightarrow 0^+$, and $u\in C^{1-0^+}((0,T];L^\infty(\mathbb{R}^n))\cap L^\infty((0,T];C^{2s-0^+}(\mathbb{R}^n))$. Moreover,  $\psi^\epsilon\longrightarrow \psi$ in $C^2$ norm. To show that in fact $u$ is a viscosity solution of \eqref{prin}, let $\phi \in C^{1,2}_{t,x}$ such that $u-\phi$ has a strict local  maximum  at $(t_0,x_0)$. Choose $R>0$ such that $0=(u-\phi)(t_0,x_0)> (u-\phi)(t,x)$ for $(t,x)\in B_R(t_0,x_0)\setminus\{(t_0,x_0)\}$ and consider $(t^\epsilon,x^\epsilon)$ the maximum of $u^\epsilon-\phi$ at $K\coloneqq B_{R/2}(t_0,x_0)$. By the compactness of $K$, we have (up to a subsequence) $(t^\epsilon,x^\epsilon)\longrightarrow (s,y)$ as $\epsilon\longrightarrow 0^+$. By the definition of $(t^\epsilon,x^\epsilon)$ and its limit, we have
\[
(u-\phi)(t_0,x_0)\leq (u-\phi)(s,y).
\]
Since $(t_0,x_0)$ is  a strict local maximum, $(s,y)=(t_0,x_0)$. Now, since $u^\epsilon$ solves \eqref{aprox} classically and the definition of $(t^\epsilon,x^\epsilon)$, we have
\[
(\partial_t v^\epsilon+(-\Delta)^s v^\epsilon-\pazocal{I} v^\epsilon-b\cdot\nabla v^\epsilon-ru^\epsilon)(t^\epsilon,x^\epsilon)\leq\beta_\epsilon(u^\epsilon-\psi^\epsilon)(t^\epsilon,x^\epsilon),
\]
where $v^\epsilon$ as in \eqref{auxiliarsub}, replacing $u$ by $u^\epsilon$. By the uniform bound \eqref{boundbeta}, we have that $u(t,\cdot)\geq \psi$ for all $t\in(0,T]$. By letting $\epsilon\longrightarrow 0^+$ at the above inequality, we conclude
\[
(\partial_t v+(-\Delta)^sv-\pazocal{I}v-b\cdot\nabla v-rv)(t_0,x_0)\leq\begin{cases} 1, \quad \text{if } v(t_0,x_0)=\psi(x_0);\\
0, \quad \text{if } v(t_0,x_0)>\psi(x_0).
\end{cases}
\]
Hence,
\[\begin{cases}
\partial_t v+(-\Delta)^sv-\pazocal{I}v-b\cdot\nabla v-r v)(t_0,x_0)\leq 0 & \text{if } u(t_0,x_0)>\psi(x_0); \\
u(0,x)  = \psi(x) & \forall \; x\in \mathbb{R}^{n}.
\end{cases}\]
Since $(t_0,x_0)$ is arbitrary, we conclude that $u$ is a viscosity subsolution. To show that $u$ is also a viscosity supersolution, we remark that by the same ideas as above that for $0=(u-\varphi)(t_0,x_0)< (u-\varphi)(t,x)$ for $(t,x)\in B_R(t_0,x_0)\setminus\{(t_0,x_0)\}$,  we obtain that
\[
(\partial_t v+(-\Delta)^sv-\pazocal{I}v-b\cdot\nabla v-r v)(t_0,x_0)\geq\begin{cases} 1, \quad &\text{if } v(t_0,x_0)=\psi(x_0);\\
0, \quad &\text{if } v(t_0,x_0)>\psi(x_0),
\end{cases}
\]
where $v$ as in \eqref{auxiliarsuper}. Thus,
\[\begin{cases}
\partial_t v+(-\Delta)^sv-\pazocal{I}v-b\cdot\nabla v-r v)(t_0,x_0)\geq 0; \\
u(0,\cdot)  = \psi,
\end{cases}\]
and it follows that $u$ is a viscosity supersolution, and hence $u$ is a viscosity solution of \eqref{prin}.
\endproof

We now want to establish the uniqueness of solutions of \eqref{prin}. In order to do so, we introduce, in the parabolic case, the sup-convolution and inf-convolution and the $\Gamma$-convergence; see \cite[Section 5]{ellipticity} for the elliptic case.
\begin{definition} Given an upper semicontinuous function $u$, the sup-convolution approximation $u^\epsilon$ is given by
\[
u^\epsilon(t,x)=\sup_{(s,y)\in (0,T]\times\mathbb{R}^n} \left\{u(t+s,x+y)-\frac{|(s,y)|^2}{\epsilon} \right\}.
\]
Analogously, if $u$ is lower semicontinuous, its inf-convolution $u_\epsilon$ is given by
\[
u_\epsilon(t,x)=\inf_{(s,y)\in(0,T]\times\mathbb{R}^n}\left\{u(t+s,x+y)+\frac{|(s,y)|^2}{\epsilon}\right\}.
\]
\end{definition}
Observe that $u$ bounded implies $u^\epsilon$ and $u_\epsilon$ bounded.
\begin{definition} A sequence of lower semicontinuous functions $u_k$ $\Gamma$-converges to $u$ in $(0,T]\times\mathbb{R}^n$ if the following two  conditions hold:
\begin{itemize}
\item For every sequence $(t_k,x_k)\longrightarrow (t,x)$, $\liminf_{k\rightarrow \infty} u_k(t_k,x_k)\geq u(t,x)$.
\item For every $(t,x)$, there exists a sequence $(t_k,x_k)\longrightarrow (t,x)$ such that $\limsup_{k\rightarrow \infty} u_k(t_k,x_k)= u(t,x)$.
\end{itemize}
\end{definition}
In the next proposition, we show that we are allowed to change the set of test functions $\phi$ in the \Cref{auxdefviscosity} of a viscosity solution by the set of functions that touch from above (below) and that are punctually $C^{1;1,1}_{t,x}$.
\proposition\label{changetestfunction} Let $u$ be an upper semicontinuous function such that $\pazocal{L}u\leq f$ in the viscosity sense. Let $\phi$ be a bounded function such that $\phi\in C^{1;1,1}_{t,x}$ punctually at $(t,x)$. Assume that $\phi$ touches $u$ from above at $(t,x)$. Then $\pazocal{L}\phi(t,x)$ is defined in the classical sense and $\pazocal{L}\phi(t,x)\leq f(t,x)$.
\proof By the assumed regularity of $\phi$, $\pazocal{L}\phi(t,x)$ is classically defined. Moreover, there exists a polynomial $q$, quadratic in space and linear in time, that touches $\phi$ from above at $(t,x)$. Let
\[
v_r\coloneqq \begin{cases} \phi \quad \text{in } B_r(t,x),\\
u \quad \text{in } (0,T]\times\mathbb{R}^n\setminus B_r(t,x).
\end{cases}
\]
Since $\pazocal{L}u\leq f$ in the viscosity sense, $\pazocal{L}v_r(t,x)\leq f(t,x)$, and $\pazocal{L}v_r(t,x)$ is well-defined. Let
\[
u_r\coloneqq \begin{cases} q \quad \text{in } B_r(t,x),\\
\phi \quad \text{in } (0,T]\times\mathbb{R}^n\setminus B_r(t,x).
\end{cases}
\]
Thus, we have
\[\begin{split}
\pazocal{L}\phi(t,x)&\leq \pazocal{L}u_r(t,x)+(M^+_{\mathcal{L}_0}-(-\Delta)^s)(u_r-\phi)(t,x)\\
&\leq \pazocal{L}u_r(t,x)\\
&\leq \pazocal{L}v_r(t,x)+(M^+_{\mathcal{L}_0}-(-\Delta)^s)(v_r-u_r)(t,x)\\
&\leq f(t,x)+\Lambda\int_{B_r(x)}\frac{(\delta(\phi-q)(x,y,t))^+}{|y|^{n+2\sigma}}\dd y+\int_{B_r(x)}\frac{(\delta(\phi-q)(x,y,t))}{|y|^{n+2s}}\dd y\\
&\leq f(t,x)+\epsilon,
\end{split}\]
for any $\epsilon>0$, since both integrands are bounded by $|y|^{2-2\sigma-n}$. The proposition follows.
\endproof
Of course, an analogue of \Cref{changetestfunction} holds for supersolutions, and its proof is similar. Analogously to \cite[Propositions 5.4 and 5.5]{ellipticity}, in our setting we have the following:
\proposition\label{toolcomparison} If $u$ is bounded and lower-semicontinuous in $(0,T]\times\mathbb{R}^n$, then $u_\epsilon$ $\Gamma$-converges to $u$. Likewise, if $u$ is bounded and upper-semicontinuous in $(0,T]\times\mathbb{R}^n$, then $-u^\epsilon$ $\Gamma$-converges to $-u$. If $u$ satisfies $\pazocal{L}u\leq f$ in the viscosity sense, then $\pazocal{L}u^\epsilon\leq f+d_\epsilon$ in the viscosity sense; if $v$ satisfies $\pazocal{L}v\geq f$ in the viscosity sense, then $\pazocal{L}v_\epsilon\geq f-d_\epsilon$ in the viscosity sense, where $d_\epsilon\longrightarrow 0$ as $\epsilon\longrightarrow 0$ and depends on the modulus of continuity.
\proof
The first claim is just a generalization $u^\epsilon\longrightarrow u$ locally uniformly if $u$ is continuous. For the second claim, suppose that the $f$ has modulus of continuity $\omega$. Let $(t_0,x_0)$ be such that $u^\epsilon(t,x)-\phi(t,x)< u^\epsilon(t_0,x_0)-\phi(t_0,x_0)=0$ for all $(t,x)\in B_R(t_0,x_0)\setminus \{(t_0,x_0)\}$, $\phi\in C^{1,2}_{t,x}$. Define $$\eta(s,y)\coloneqq \phi(s-s_0+t_0,y-y_0+x_0)+\frac{|(s_0-t_0,y_0-x_0)|^2}{\epsilon},$$ where $(s_0,y_0)$ is such that
\[
u^\epsilon(t_0,x_0)=u(s_0,y_0)-\frac{|(s_0-t_0,y_0-x_0)|^2}{\epsilon}.
\]
Then, $u(s,y)-\eta(s,y)< u(s_0,y_0)-\eta(s_0,y_0)=0$ for all $(s,y)\in B_R(s_0,y_0)\setminus \{(s_0,y_0)\}$, and so
\[
\pazocal{L}v^\epsilon(t_0,x_0)=\pazocal{L}v(s_0,y_0)\leq f(s_0,y_0),
\]
where $v$ is as in \eqref{auxiliarsub}, and $v^\epsilon$ as in \eqref{auxiliarsub}, replacing $u$ by $u^\epsilon$. Since $f$ has a modulus of continuity $\omega$, we have $f(s_0,y_0)\leq f(t_0,x_0)+\omega(|(t_0-s_0,x_0-y_0)|)$. Noticing that $u^\epsilon\geq u$, one has 
\[\frac{|(s_0-t_0,y_0-x_0)|^2}{\epsilon}\leq u(s_0,y_0)-u(t_0,x_0)\leq 2\|u\|_{L^\infty((0,T]\times\mathbb{R}^n)},\]
and we conclude
\[
\pazocal{L}v^\epsilon(t_0,x_0)\leq f(t_0,x_0)+d_\epsilon,
\]
where $d_\epsilon\coloneqq \omega(2\|u\|_{L^\infty((0,T]\times\mathbb{R}^n)}\epsilon^{1/2})$. The proof for supersolutions is analogous.
\endproof
The next lemma is a straightforward adaptation of \cite[Lemma 5.8]{ellipticity}, since the main difficulty of the operator $\pazocal{L}$ is the nonlocal part $(-\Delta)^s-\pazocal{I}$. 

\lemma\label{tool2comparison} Let $u$ and $v$ be bounded functions such that $u$ is upper-semicontinuous with $\pazocal{L}u\leq f$ in the viscosity sense, and $v$ is lower-semicontinuous with $\pazocal{L}v\geq g$  in the viscosity sense. Then
\[
\pazocal{L}^+(u-v)\coloneqq(\partial_t+(-\Delta)^s-b\cdot\nabla-r -M^+_{\mathcal{L}_0})(u-v)\leq f-g \text{ in the viscosity sense.}
\]

\proof
By \Cref{toolcomparison} and the stability of viscosity solutions under $\Gamma$-limits (see \cite[Lemma 4.5]{ellipticity}), it is enough to show that $\pazocal{L}^+(u^\epsilon-v_\epsilon)\leq  f-g+2d_\epsilon$ in the viscosity sense for every $\epsilon>0$. Let $\phi\in C^{1,2}_{t,x}$ touching from above $u^\epsilon-v_\epsilon$ at $(t,x)$. Since $u$ and $v$ are bounded, then $u^\epsilon$ and $v_\epsilon$ are also bounded. Since $u^\epsilon-v_\epsilon$ is touched by above at $(t,x)$ by a $C^{1,2}_{t,x}$ function, then both $u^\epsilon$ and $-v_\epsilon$ must be $C^{1,2}_{t,x}$ punctually at $(t,x)$. Moreover, by \eqref{changetestfunction}, we can evaluate $\pazocal{L}u^\epsilon$ and $\pazocal{L}v_\epsilon$ at $(t,x)$ in the classical sense. Thus, by \Cref{toolcomparison},
\[
\pazocal{L}^+(u^\epsilon-v_\epsilon)(t,x)\leq \pazocal{L}u^\epsilon(t,x)-\pazocal{L}v_\epsilon(t,x)\leq f(t,x)-g(t,x)+2d_\epsilon.
\]
Hence, $\pazocal{L}^+\phi(t,x)\leq f(t,x)-g(t,x)+2d_\epsilon$ since $\phi$ touches $v_\epsilon-u^\epsilon$ by above. Thus,
$\pazocal{L}^+(u^\epsilon-v_\epsilon)\leq  f-g+2d_\epsilon$ in the viscosity sense.
\endproof
We now prove a maximum principle for $\pazocal{L}^+ u\leq f$. This is the key result for our comparison principle of \Cref{comparisonprinciple} below.
\lemma\label{maximumviscosity} Let $u$ be a bounded upper-semicontinuous function defined in $(0,T]\times\mathbb{R}^n$ such that, in the viscosity sense, $\pazocal{L}^+ u\leq f$ in an open set $\Omega\subset(0,T]\times\mathbb{R}^n$. Then, there exists a constant $C = C(T)>0$ such that
\[\sup_{\Omega}u\leq C\left(\|f^+\|_{L^\infty((0,T]\times\mathbb{R}^n)}+\sup_{\Omega^c}u\right).\]
\proof For $t\in(0,T]$, set
\[
\phi_M(t)\coloneqq e^{\gamma t}(M+\epsilon+\|f^+\|_{L^\infty((0,T]\times\mathbb{R}^n)}),
\]
where $\gamma\coloneqq r+1$, and $\epsilon>0$. Note that $\pazocal{L}^+\phi_M(t,x)=(\gamma-r)\phi_M(t)>\|f^+\|_{L^\infty((0,T]\times\mathbb{R}^n)}$. Let $M_0$ be the smallest value of $M$ for which $\phi_M\geq u$ in $(0,T]\times\mathbb{R}^n$. We assume by contradiction that $M_0>\sup_{\Omega^c}u$. Then, there exists $(t_0,x_0)\in \Omega$ such that $u(t_0,x_0)=\phi_{M_0}(t_0)$ (by the minimality of $M_0$), and so $\phi$ touches $u$ from above at $(t_0,x_0)$. Since $u$ is a viscosity subsolution at $\Omega$, we would have $\pazocal{L}^+\phi_{M_0}(t_0,x_0)\leq f(t_0,x_0)$, a contradiction. Therefore, for $(t,x)\in(0,T]\times\mathbb{R}^n$, we have
\[\begin{split}
u(t,x)\leq \phi_{M_0}(t)&\leq e^{\gamma T}(M_0+ \epsilon+\|f^+\|_{L^\infty((0,T]\times\mathbb{R}^n)})\\
&\leq e^{\gamma T}\left(\sup_{\Omega^c}u+\epsilon+\|f^+\|_{L^\infty((0,T]\times\mathbb{R}^n)}\right).
\end{split}\]
Letting $\epsilon\longrightarrow 0$, we conclude the proof.
\endproof
We now prove the comparison principle for \eqref{prin}:
\theorem[Comparison Principle]\label{comparisonprinciple} Let $u,v$ be bounded viscosity subsolution and supersolution of \eqref{prin}, respectively. Then $u\leq v$ in $(0,T]\times \mathbb{R}^n$.
\proof
We first notice that $u(0,\cdot)\leq \psi\leq v(0,\cdot)$. For $t>0$, if $u(t,x)\leq\psi(x)$, then $v(t,x)\geq \psi(x)\geq u(t,x)$. Moreover, by \Cref{tool2comparison} we have $\pazocal{L}^+(u-v)\leq 0$ in the viscosity sense at $\{u>\psi\}$. By \Cref{maximumviscosity}, we conclude $u\leq v$.
\endproof
As a direct consequence (combined with \Cref{existenceproof}), we have the following corollary.
\begin{corollary}
[Existence, Uniqueness and Regularity]\label{mainthm1} There exists a unique bounded viscosity solution $u$ of \eqref{prin}. Moreover, $u\in C^{1-0^+,2s-0^+}_{t,x}$ and $u$ is approximated by a solution of the penalized equation \eqref{aprox}.  
\end{corollary}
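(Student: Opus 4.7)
The plan is to assemble this corollary directly from the two main results established in this section, so there is no new technical content to produce. For existence, regularity, and the approximation statement, I would simply invoke \Cref{existenceproof}: it already produces a bounded viscosity solution $u$ of \eqref{prin} obtained as the limit $u^\epsilon\longrightarrow u$ as $\epsilon\longrightarrow 0^+$ of solutions of the penalized problem \eqref{aprox}, and it supplies the regularity
\[
u\in C^{1-0^+}((0,T];L^\infty(\mathbb{R}^n))\cap L^\infty((0,T];C^{2s-0^+}(\mathbb{R}^n)),
\]
which is precisely the $C^{1-0^+,2s-0^+}_{t,x}$ bound claimed.

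For uniqueness, I would take two bounded viscosity solutions $u_1,u_2$ of \eqref{prin}. By \Cref{defviscosity}, each $u_i$ is simultaneously a bounded viscosity subsolution and a bounded viscosity supersolution. Applying \Cref{comparisonprinciple} with subsolution $u_1$ and supersolution $u_2$ gives $u_1\leq u_2$ on $(0,T]\times\mathbb{R}^n$; interchanging the roles yields $u_2\leq u_1$, and hence $u_1\equiv u_2$.

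I do not expect any real obstacle in this final step: the genuinely delicate work — the penalization scheme together with the uniform bound in \Cref{uniformbound}, the sup/inf-convolution machinery and its $\Gamma$-convergence in \Cref{toolcomparison}, the stability lemma \Cref{tool2comparison}, and the maximum principle \Cref{maximumviscosity} used to establish \Cref{comparisonprinciple} — has already been carried out above. The corollary itself amounts to a one-line synthesis of \Cref{existenceproof} and \Cref{comparisonprinciple}, with the only point worth explicitly checking being that the notion of viscosity solution in \Cref{defviscosity} is the one used in both theorems, so the symmetric application of the comparison principle to two solutions is immediate.
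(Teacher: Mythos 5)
Your proposal is correct and matches the paper's own (implicit) argument exactly: the paper presents this corollary as a direct consequence of \Cref{existenceproof} (existence, regularity, and the penalization approximation) together with \Cref{comparisonprinciple} (applied symmetrically to two solutions to force uniqueness). Nothing further is needed.
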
 
Once the Comparison Principle is established, we are able to adapt preliminary regularity properties of solutions analogous to \cite[Lemma 3.2]{CaFi-08}. We implicitly use that $\pazocal{L}$ is translation invariant, since $b$ and $r$ are fixed and $\pazocal{I}$ is assumed to be translation invariant; see assumption \eqref{K}.

\begin{lemma}\label{lem:semiconvexity}
Let $u$ be a solution of \eqref{prin}. Then, for any fixed $t > 0$, $u(t, \cdot)$ is globally Lipschitz and uniformly semiconvex. Moreover, for any fixed $x \in \mathbb{R}^{n}$, the function $t \longmapsto u(t,x)$ non-decreasing.
\end{lemma}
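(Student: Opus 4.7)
I would set $L := \|\nabla\psi\|_{L^\infty(\mathbb{R}^n)}$ and $\gamma := \max\{r, 0\}$, and for each $h\in\mathbb{R}^n$ consider the candidate competitor
\[
w(t,x) := u(t, x+h) - L|h|e^{\gamma t}.
\]
The claim is that $w$ is a viscosity subsolution of \eqref{prin}: the initial condition $w(0,\cdot)=\psi(\cdot+h)-L|h|\leq \psi$ follows from $\psi$ being $L$-Lipschitz, and in the region $\{w>\psi\}$ the chain $u(t,x+h)>\psi(x)+L|h|e^{\gamma t}\geq \psi(x+h)$ forces $(t,x+h)$ into the continuation set, so $\pazocal{L}u(\cdot,\cdot+h)\leq 0$ in the viscosity sense there. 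Since $(-\Delta)^{s}$, $\nabla$ and $\pazocal{I}$ all annihilate space-constants (by the ellipticity bound \eqref{propertyI} applied to a constant), a direct computation then gives $\pazocal{L}w=-L|h|(\gamma-r)e^{\gamma t}\leq 0$. Applying \Cref{comparisonprinciple} to subsolution $w$ and supersolution $u$ yields $u(t,x+h)-u(t,x)\leq Le^{\gamma T}|h|$, and swapping $h\leftrightarrow -h$ completes the Lipschitz bound uniformly in $t\in(0,T]$.

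\textbf{Monotonicity in time.} For fixed $h>0$, translation invariance of $\pazocal{L}$ in time and the $t$-independence of $\psi$ imply that $\tilde u(t,x) := u(t+h, x)$ is a bounded viscosity supersolution of \eqref{prin} on $(0,T-h]\times\mathbb{R}^n$, with initial datum $\tilde u(0,\cdot)=u(h,\cdot)\geq \psi$ since any solution satisfies $u(t,\cdot)\geq \psi$. A single invocation of \Cref{comparisonprinciple} with subsolution $u$ and supersolution $\tilde u$ then gives $u(t,x)\leq u(t+h,x)$ throughout $(0,T-h]\times\mathbb{R}^n$.

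\textbf{Semiconvexity.} This is where the convexity of $\pazocal{I}$ becomes essential. The cleanest route is through the classical penalised solutions $u^\epsilon$ of \eqref{aprox} from \Cref{regularityofuepsilon}. For $h\in\mathbb{R}^n$, set
\[
v^\epsilon(t,x) := \tfrac{1}{2}\bigl(u^\epsilon(t,x+h)+u^\epsilon(t,x-h)\bigr), \qquad w^\epsilon(t,x) := v^\epsilon(t,x) + C|h|^2 e^{\gamma t},
\]
with $C := \|D^2\psi\|_{L^\infty(\mathbb{R}^n)}/2$ and $\gamma := \max\{r,0\}$. Since each $u^\epsilon(\cdot,\cdot\pm h)$ satisfies \eqref{aprox} classically with obstacle $\psi^\epsilon(\cdot\pm h)$, averaging the two equations and invoking both the convexity of $\pazocal{I}$ and the convexity of the penalty $\beta_\epsilon$ yields pointwise
\[
\pazocal{L}v^\epsilon \geq \beta_\epsilon\Bigl(v^\epsilon-\tfrac{\psi^\epsilon(\cdot+h)+\psi^\epsilon(\cdot-h)}{2}\Bigr).
\]
The uniform semiconvexity $\psi^\epsilon(\cdot+h)+\psi^\epsilon(\cdot-h)\geq 2\psi^\epsilon - \|D^2\psi\|_\infty|h|^2$, together with the monotonicity of $\beta_\epsilon$ and the fact that space-constants are annihilated by $(-\Delta)^s$, $\nabla$ and $\pazocal{I}$, then give $\pazocal{L}w^\epsilon \geq \beta_\epsilon(w^\epsilon - \psi^\epsilon)$ and $w^\epsilon(0,\cdot)\geq \psi^\epsilon$. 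Linearising the decreasing function $\beta_\epsilon$ and applying the maximum principle as in \Cref{supremum} produce $u^\epsilon\leq w^\epsilon$; passing to the limit $\epsilon\to 0^+$ via \Cref{existenceproof} yields
\[
u(t,x+h)+u(t,x-h)-2u(t,x) \geq -2Ce^{\gamma T}|h|^2,
\]
which is the claimed uniform semiconvexity.

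\textbf{Main obstacle.} The Lipschitz and monotonicity parts are essentially one-line applications of \Cref{comparisonprinciple} once the right competitor is written down, and the only nontrivial bookkeeping is the exponential buffer $e^{\gamma t}$ needed when $r>0$. The semiconvexity step is the delicate one: correctly chaining the two separate convexity facts (for $\pazocal{I}$ and for $\beta_\epsilon$) forces the averaged function to have its penalty evaluated at the \emph{average} of the shifted obstacles, and it is precisely the semiconvexity defect of $\psi^\epsilon$ that must be absorbed by the additive perturbation $C|h|^2 e^{\gamma t}$; fine-tuning $C$ and $\gamma$ so that $w^\epsilon$ becomes an honest supersolution of the penalised problem with the same obstacle $\psi^\epsilon$ is the calculation that requires the most care.
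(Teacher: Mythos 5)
Your proposal is correct. The Lipschitz and time-monotonicity arguments are essentially the paper's: both reduce to \Cref{comparisonprinciple} applied to a translated competitor (the paper phrases the spatial translate $u(\cdot,\cdot+v)+C|v|$ as a supersolution of a shifted obstacle problem and invokes \Cref{maximumviscosity} plus \Cref{comparisonprinciple}; your variant $u(\cdot,\cdot+h)-L|h|e^{\gamma t}$ checked directly as a subsolution of \eqref{prin} is an equivalent and, if anything, cleaner packaging of the same idea). The semiconvexity step is where you genuinely diverge. The paper stays at the level of the viscosity solution $u$ itself: it forms $\tilde u=\tfrac12\big(u(\cdot,\cdot+v)+u(\cdot,\cdot-v)+C|v|^2\big)$, uses the convexity of $\pazocal{I}$ to assert $\pazocal{L}\tilde u\ge -\tfrac{rC}{2}|v|^2$, and concludes via \Cref{maximumviscosity} and \Cref{comparisonprinciple}; this is shorter but quietly assumes that the average of two viscosity solutions is a viscosity supersolution of the averaged inequality, a point that is delicate for nonlinear nonlocal operators. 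You instead run the convexity computation on the classical penalized solutions $u^\epsilon$ of \eqref{aprox} from \Cref{regularityofuepsilon}, where it is a pointwise identity, close with a linearized maximum principle in the style of \Cref{supremum}, and pass to the limit via \Cref{existenceproof}. This buys rigor at the viscosity-theoretic step the paper glosses over, at the price of one extra structural hypothesis — the convexity of the penalty $\beta_\epsilon$ — which holds for the standard exponential penalty the paper's estimates implicitly use but is nowhere stated as an assumption; you should record it explicitly. Your final semiconvexity constant $\|D^2\psi\|_{L^\infty}e^{\gamma T}$ differs from the paper's but is equally uniform, which is all the later arguments need.
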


\begin{proof} Fix $v\in\mathbb{R}^n$ and define $\tilde{u}(t,x)\coloneqq u(t,x+v)+C|v|$. Hence, $\tilde{u}$ solves
\[
\begin{cases}
\min \{\pazocal{L}\tilde{u}+rC|v|, \ \tilde{u}- \tilde{\psi}\} = 0 & \text{ in } (0,T] \times \mathbb{R}^{n}, \\
\tilde{u}(0,x)  = \tilde{\psi} & \text{ in } \mathbb{R}^{n},
\end{cases}
\]
where $\tilde{\psi}(x)\coloneqq \psi(x+v)+C|v|$. Choosing $C\coloneqq\|\nabla\psi\|_{L^\infty(\mathbb{R}^n)}$, if $u(t,x)=\psi(x)$, then $u(t,x)\leq \tilde{\psi}(x)\leq \tilde{u}(t,x)$. If $u(t,x)>\psi(x)$, then by \Cref{maximumviscosity}, we have $\pazocal{L}^+(u-\tilde{u})(t,x)\leq C|v|$, and by \Cref{comparisonprinciple}, $(u-\tilde{u})(t,x)\leq Cre^{\gamma T}|v|$, hence $u(t,\cdot)$ is globally Lipschitz\footnote{Notice that, by \Cref{existenceproof}, we already know $u(t,\cdot)$ is globally Lipschitz, but we have improved its Lipschitz constant with \Cref{lem:semiconvexity}.}.

Moreover, for any fixed $\eta \ge 0$, the function $\tilde{u}(t,x)\coloneqq u(t+\eta,x)$ solves 
\[
\begin{cases}
\min \{\pazocal{L}\tilde{u}, \ \tilde{u}- \psi\} = 0 & \text{ in } (-\eta,T-\eta] \times \mathbb{R}^{n}, \\
\tilde{u}(0,x)  = u(\eta,x) & \text{ in } \mathbb{R}^{n}.
\end{cases}
\]
 We know $u(t,x)\geq \psi(x)$ so that, in particular, $u(\eta,x) \geq \psi(x)$; therefore, by \Cref{comparisonprinciple},
\[
u(t+\eta,x)\geq u(t,x) \ \text{ for every } \eta,t\geq 0.
\]

Finally, denoting $C\coloneqq 2\,\|D^2\psi\|_{L^\infty(\mathbb{R}^n)}$, for a fixed $v\in \mathbb{R}^n$, we have
\[
\tilde{u}(t,x)\coloneqq\frac{u(t,x+v)+u(t,x-v)+C|v|^2}{2}\geq\frac{\psi(x+v)+\psi(x-v)+C|v|^2}{2}\geq \psi(x).
\]
If $u(t,x)=\psi(x)$, then $u$ is semiconvex. Moreover, since $\pazocal{I}$ is convex, $\tilde{u}$ satisfies
\[
\pazocal{L}\tilde{u}(t,x)\geq\frac{1}{2}(\pazocal{L}u(t,x+v)+\pazocal{L}u(t,x-v)-rC|v|^2)\geq -\frac{rC}{2}|v|^2.
\]
Hence, if $u(t,x)>\psi(x)$, then by \Cref{maximumviscosity}, $\pazocal{L}^+(u-\tilde{u})(t,x)\leq rC/2|v|^2$, and by \Cref{comparisonprinciple}, $(u-\tilde{u})(t,x)\leq Cre^{\gamma T}/2|v|^2$. Since $x,v$ are arbitrary, the $C_0$-semiconvexity of $u(t,\cdot)$ follows, where $C_0\coloneqq \|D^2\psi\|_{L^\infty(\mathbb{R}^n)}(1+re^{\gamma T})$.
\end{proof}

Our next lemma deals with basic estimates of our parabolic operator, which gives a Lipschitz regularity in spacetime (see \Cref{cor:lipschitz}) and a comparison between $(-\Delta)^su$ and $\pazocal{R}u$ at the contact set $\{u(t,\cdot)=\psi\}$ and the open set $\{u(t,\cdot)>\psi\}$ (see \Cref{comparison fractional and R}).

As a direct consequence, we have the following corollary.

\begin{corollary}\label{cor:lipschitz} 
If $u$ solves \eqref{prin}, then $u$ is Lipschitz in space-time with
\[
\|\partial_t u\|_{L^\infty((0,T]\times\mathbb{R}^n)}+\|\nabla u\|_{L^\infty((0,T]\times\mathbb{R}^n)}\le C(s,n,\lambda,\Lambda,r,b,T,\|\psi\|_{C^2(\mathbb{R}^n)}).
\]
\end{corollary}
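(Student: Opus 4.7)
The plan is to treat the space and time derivatives separately. The spatial bound $\|\nabla u\|_{L^\infty} \leq C$ is immediate from \Cref{lem:semiconvexity}, which already provides that $u(t, \cdot)$ is globally Lipschitz with a constant uniform in $t$. For the temporal derivative, the monotonicity in \Cref{lem:semiconvexity} gives $\partial_t u \geq 0$ in the viscosity sense, so only the upper bound remains.

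To obtain that upper bound, I would first control the detachment speed of $u$ from the obstacle near $t = 0$ by means of an explicit barrier, and then propagate the estimate to all times via time-translation. For the initial step, I would try
\[
v(t, x) \coloneqq \psi(x) + C(e^{\gamma t} - 1), \qquad \gamma \coloneqq |r| + 1.
\]
A direct computation, using that $\pazocal{I}$ annihilates spatial constants (apply \eqref{propertyI} to $\psi + c$ and $\psi$, noting that $M^\pm_{\mathcal{L}_0}$ kill constants because they act on second differences $\delta u$), gives
\[
\pazocal{L} v = C(\gamma - r) e^{\gamma t} + r C + \bigl((-\Delta)^s \psi - \pazocal{I}\psi - b \cdot \nabla \psi - r \psi\bigr).
\]
For $C$ large (depending on $\|\psi\|_{C^2}$, $b$, $r$, $\lambda$, $\Lambda$, $s$, $\sigma$), one checks case-by-case on the sign of $r$ that the right-hand side is nonnegative. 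Since also $v \geq \psi$ and $v(0, \cdot) = \psi$, $v$ is a supersolution of \eqref{prin}, and \Cref{comparisonprinciple} yields
\[
u(h, x) - \psi(x) \leq C(e^{\gamma h} - 1) \leq C_1 h \quad \text{for all } h \in [0, T].
\]

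For the propagation step, I would fix $h > 0$ and define $w(s, x) \coloneqq u(s + h, x) - C_1 h\, e^{\gamma s}$ on $(0, T - h] \times \mathbb{R}^n$. By the previous step, $w(0, \cdot) \leq \psi$. At any point where $w > \psi$ we also have $u(s+h, x) > \psi(x)$, whence $\pazocal{L} u(s + h, \cdot) = 0$ in the viscosity sense, so
\[
\pazocal{L} w = -C_1 h (\gamma - r) e^{\gamma s} \leq 0.
\]
Thus $w$ is a subsolution of \eqref{prin} in the sense of \Cref{defviscosity}, and \Cref{comparisonprinciple} gives $u(s + h, x) - u(s, x) \leq C_1 h\, e^{\gamma s} \leq C_1 e^{\gamma T} h$. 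Sending $h \to 0^+$ yields $\partial_t u \leq C_1 e^{\gamma T}$ in the viscosity sense; together with the nonnegativity from monotonicity, this closes the estimate.

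The main obstacle I anticipate is the careful verification of the inequality $\pazocal{L} v \geq 0$ for the barrier, in particular the justification that $\pazocal{I}$ is invariant under the addition of a spatial constant, together with the sign bookkeeping in the compensation between $C\gamma e^{\gamma t}$ and $rC e^{\gamma t}$ when $r$ is negative. Once this is in place, both the initial detachment estimate and its time-translation both follow from clean applications of \Cref{comparisonprinciple}.
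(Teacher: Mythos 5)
Your argument is correct, but it takes a genuinely different route from the paper's. For the time derivative, the paper goes back to the penalized problem \eqref{aprox}: it applies a difference quotient $\partial_t^h$ to the equation, observes that $w^\epsilon = e^{-rt}\partial_t^h u^\epsilon$ satisfies a linear parabolic inequality whose zeroth-order coefficient $\epsilon^{-1}\beta_\epsilon(\xi)$ has the favorable sign, invokes the maximum principle to get $\|\partial_t^h u^\epsilon\|_{L^\infty}\le e^{rT}(C\|\psi\|_{C^2}+\delta)$, and then lets $\epsilon\to 0^+$ and $h\to 0^+$. You instead work entirely at the level of the limiting obstacle problem: the barrier $\psi + C(e^{\gamma t}-1)$ is a classical supersolution (your computation that $\pazocal{I}$ and $M^{\pm}_{\mathcal{L}_0}$ annihilate spatial constants is the right justification, and the sign bookkeeping works because $\gamma - r \ge 1$ forces $C(\gamma-r)e^{\gamma t}+rC\ge C\gamma$), so \Cref{comparisonprinciple} gives the detachment rate $u(h,\cdot)-\psi\le C_1 h$; the time-translation $w(s,x)=u(s+h,x)-C_1he^{\gamma s}$ is then a subsolution in the sense of \Cref{defviscosity} and comparison propagates the bound, while the lower bound $\partial_t u\ge 0$ is the monotonicity already in \Cref{lem:semiconvexity}. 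This is essentially the strategy of \cite[Lemma 3.2]{CaFi-08} transplanted to the present operator. What each buys: your route avoids the penalization machinery entirely (no differentiation of $\beta_\epsilon$, no double limit), at the price of relying on the full comparison principle for \eqref{prin} and on an explicit barrier whose verification needs $(-\Delta)^s\psi$, $\pazocal{I}\psi\in L^\infty$ (available from $\psi\in W^{2,\infty}$); the paper's route is more computational but yields the bound directly in terms of $\|((-\Delta)^s-\pazocal{R})\psi\|_{L^\infty}$ and fits the approximation framework already set up in \Cref{regularityofuepsilon}--\Cref{existenceproof}. Both give the stated constant (yours also depends on $\sigma$ through the bound on $\pazocal{I}\psi$, as does the paper's, though $\sigma$ is omitted from the statement).
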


\begin{proof} The Lipschitz regularity in space is just a restatement of \Cref{lem:semiconvexity}. Now, since $u$ solves \eqref{prin}, by \Cref{mainthm1}, $u$ is the limit of solutions $u^\epsilon$ of \eqref{aprox}. We denote by $\partial_t^h$ the difference quotient with respect to the time variable. Hence, $w^\epsilon(t,x)\coloneqq e^{-r t}\partial^h_t u^\epsilon(t,x)$ solves
\[\begin{cases}
 \partial_tw^\epsilon + (-\Delta)^{s} w^\epsilon - b \cdot \nabla w^\epsilon -  M^+_{\mathcal{L}_0}w^\epsilon + \epsilon^{-1}\beta_\epsilon(\xi)w^\epsilon\leq 0 & \text{ in } (0,T] \times \mathbb{R}^{n},\\
 |w^\epsilon(0,\cdot)|\leq |(b \cdot \nabla  + \pazocal{I} + r-(-\Delta)^{s})\ \psi^\epsilon|+\delta & \text{ in } \mathbb{R}^{n},
\end{cases}\]
where $\xi\in L^\infty((0,T]\times\mathbb{R}^n)$ is nonnegative, and $|\partial_t^h u^\epsilon(0,\cdot)-\partial_t u^\epsilon(0,\cdot)|\leq \delta$ for $h$ small for $\delta>0$. Hence, by the maximum principle, we have
\[
\|\partial^h_t u^\epsilon\|_{L^\infty((0,T]\times\mathbb{R}^n)}\leq e^{r T}(C\|\psi\|_{C^2(\mathbb{R}^n)}+\delta).
\]
Letting $\epsilon\longrightarrow 0^+$ and $h\longrightarrow 0^+$, we conclude the proof.
\end{proof} 

We notice that, since $u\in C^{1-0^+}((0,T];L^\infty(\mathbb{R}^n))\cap L^\infty((0,T];C^{2s-0^+}(\mathbb{R}^n))$ (see \Cref{mainthm1}), we have
\begin{equation}\label{gamma}
\pazocal{R}u\in L^\infty((0,T];C^{\gamma}(\mathbb{R}^n)) \quad \text{with} \quad \gamma\coloneqq s-\max\{\sigma,1/2\}.
\end{equation}
Here, we choose this exponent for simplicity, but the following general regularity holds:
\[
\pazocal{R}u\in L^\infty((0,T];C^{2\gamma-0^+}(\mathbb{R}^n)).
\]

\begin{lemma}
\label{comparison fractional and R} For a solution $u$ of \eqref{prin} and a fixed $t_0>0$, we have
\begin{equation}\label{equalset}
0\leq (-\Delta)^su(t_0, \cdot)-\pazocal{R} u(t_0, \cdot) < + \infty \ \text{ a.e. in } \ \{u(t_0, \cdot)=\psi\};
\end{equation}
\begin{equation}\label{diffset}
(-\Delta)^su(t_0, \cdot)-\pazocal{R} u(t_0, \cdot) \le 0 \ \text{ in } \ \{u(t_0, \cdot)>\psi\}.
\end{equation}
\end{lemma}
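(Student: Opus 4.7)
The plan is to prove the three inequalities -- the upper bound in \eqref{equalset}, the lower bound in \eqref{equalset}, and \eqref{diffset} -- separately, exploiting the monotonicity in time and semiconvexity of $u(t_0,\cdot)$ from \Cref{lem:semiconvexity}, the space-time Lipschitz regularity from \Cref{cor:lipschitz}, and the Hölder boundedness of $\pazocal{R}u$ from \eqref{gamma}.

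The upper bound in \eqref{equalset} is immediate: at any contact point $x_0$, the function $u(t_0,\cdot) - \psi$ is non-negative and attains its global minimum $0$ at $x_0$, so
\[
-(-\Delta)^s(u(t_0,\cdot) - \psi)(x_0) = c_{n,s}\int_{\mathbb{R}^n}\frac{u(t_0, y) - \psi(y)}{|x_0 - y|^{n+2s}}\,\dd y \ge 0,
\]
whence $(-\Delta)^s u(t_0,x_0) \le (-\Delta)^s \psi(x_0) < +\infty$ (since $\psi \in C^2$), and combined with the boundedness of $\pazocal{R}u$ the claim follows. For \eqref{diffset}: at $(t_0,x_0)$ with $u(t_0,x_0) > \psi(x_0)$, the convergence $u^\epsilon \to u$ and $\psi^\epsilon \to \psi$ from \Cref{existenceproof} gives $\beta_\epsilon(u^\epsilon - \psi^\epsilon)(t_0,x_0) \to 0$ as $\epsilon \to 0^+$, so passing to the limit in the penalized equation \eqref{aprox} yields $\partial_t u(t_0,x_0) + (-\Delta)^s u(t_0,x_0) - \pazocal{R}u(t_0,x_0) = 0$; combining with $\partial_t u \ge 0$ from \Cref{lem:semiconvexity} gives \eqref{diffset}.

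The main difficulty is the lower bound in \eqref{equalset}. At a.e.\ contact point $x_0$, the semiconvexity of $u(t_0,\cdot)$ together with Alexandrov's theorem yields pointwise twice differentiability with $\nabla u(t_0,x_0) = \nabla \psi(x_0)$ and $u(t_0, y) - \psi(y) = O(|y - x_0|^2)$; moreover, since $u \ge \psi$ and $u$ is non-decreasing in $t$, the slice $u(\cdot, x_0)$ is constant equal to $\psi(x_0)$ on $[0,t_0]$, so $\partial_t u(t_0,x_0) = 0$ (using the Lipschitz regularity in $t$). I then test the viscosity supersolution property of $u$ at $(t_0,x_0)$ with
\[
\phi_{\epsilon, A}(t,x) \coloneqq \psi(x) - \epsilon|x - x_0|^2 - A(t - t_0)^2,
\]
which touches $u$ strictly from below at $(t_0,x_0)$ (because $u \ge \psi$ and the perturbation is strictly positive elsewhere), obtaining $0 \le \pazocal{L}v_{\epsilon, A, R}(t_0,x_0)$ with $v_{\epsilon, A, R}$ as in \eqref{auxiliarsuper} and $\partial_t v_{\epsilon, A, R}(t_0,x_0) = 0$. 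Letting $\epsilon, A \to 0^+$ and then $R \to 0^+$, the quadratic decay $u(t_0,y) - \psi(y) = O(|y-x_0|^2)$ dominates the kernels $|y|^{-n-2s}$ and $|y|^{-n-2\sigma}$ near $y = x_0$, producing localization errors of order $R^{2-2s}$ and $R^{2-2\sigma}$ in the $(-\Delta)^s$ and $\pazocal{I}$ integrals respectively, both vanishing since $s,\sigma < 1$. In the limit I conclude $(-\Delta)^s u(t_0,x_0) - \pazocal{R}u(t_0,x_0) \ge 0$. The hard part is precisely this localization limit, which relies on combining the Alexandrov-type quadratic decay at contact points with the strict ordering $\max\{1, 2\sigma\} < 2s$ from assumption \eqref{K} to keep the nonlocal integrals absolutely convergent at vanishing scale.
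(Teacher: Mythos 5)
Your proposal is correct, and for the heart of the lemma --- the lower bound in \eqref{equalset} at a \emph{fixed} time $t_0$ --- it takes a genuinely different route from the paper. The paper first derives the identity $\partial_t u+(-\Delta)^su-\pazocal{R}u=\big[(-\Delta)^su-\pazocal{R}u\big]\chi_{\{u=\psi\}}$ almost everywhere in space--time (using $\partial_t u\ge 0$ and $\partial_t u=0$ a.e.\ on the contact set), which gives \eqref{equalset} only for a.e.\ $(t,x)$ in the contact set; it then upgrades this to every fixed $t_0$ via weak $L^2_{\operatorname{loc}}$-continuity of $t\mapsto (-\Delta)^su(t,\cdot)-\pazocal{R}u(t,\cdot)$, the monotonicity in time of the contact set, and an averaging over $[t_0-\epsilon,t_0]\times A$. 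You instead work directly on the slice $t=t_0$: at an Alexandrov point $x_0$ of the semiconvex function $u(t_0,\cdot)$ lying in the contact set, you get the quadratic detachment $0\le u(t_0,y)-\psi(y)=O(|y-x_0|^2)$ and $\nabla u(t_0,x_0)=\nabla\psi(x_0)$, and you test the supersolution property with $\psi(x)-\epsilon|x-x_0|^2-A(t-t_0)^2$, whose time derivative vanishes at $t_0$; the localization errors $O(R^{2-2s})$ and $O(R^{2-2\sigma})$ in $(-\Delta)^s$ and $\pazocal{I}$ indeed vanish, and the ellipticity sandwich \eqref{propertyI} applies because $u(t_0,\cdot)$ is punctually $C^{1,1}$ at Alexandrov points. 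This bypasses the weak-continuity/decreasing-contact-set argument entirely and is arguably cleaner for the fixed-$t_0$ statement; the paper's route, on the other hand, yields the distributional identity \eqref{eqn:parabolic-vanish-open}, which is reused later. Your upper bound in \eqref{equalset} and your treatment of \eqref{diffset} match the paper in substance; for the latter, just note that passing pointwise to the limit in the penalized equation requires locally uniform convergence of $\partial_t u^\epsilon$ and $(-\Delta)^su^\epsilon$ in the open set $\{u>\psi\}$, which follows from interior Schauder estimates since $\beta_\epsilon(u^\epsilon-\psi^\epsilon)$ is locally uniformly small there --- the same fact the paper invokes when it asserts $u$ is smooth inside $\{u>\psi\}$.
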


\begin{proof} 
Combining \Cref{cor:lipschitz} and \Cref{lem:semiconvexity}, we have that $\partial_tu\geq 0$ a.e. Also, $\partial_t u=0$ almost everywhere on the contact set $\{u=\psi\}$ so that
\[
\partial_t u+(-\Delta)^su-\pazocal{R}u=0 \;\text{ in }\; \{u>\psi\} \ \text{ and } \  \partial_t u=0\; \text{a.e. on}\;\{u=\psi\}.
\] This can be rewritten as
\begin{equation}\label{eqn:parabolic-vanish-open}
\partial_t u+(-\Delta)^su-\pazocal{R}u= \left( (-\Delta)^su-\pazocal{R}u \right)\chi_{\{u=\psi\}}.
\end{equation} This can be understood not only in the almost everywhere sense, but also in the distributional sense; incidentally, the right hand side is well defined by \Cref{uniformbound} and \Cref{cor:lipschitz} implies that $(-\Delta)^su-\pazocal{R}u$ is a bounded function. 

Notice that \eqref{eqn:parabolic-vanish-open} implies $\partial_t u+(-\Delta)^su-\pazocal{R}u$ is globally bounded and vanishes in the open set $\{u > \psi\}$, so that we can infer $u$ is smooth inside $\{u>\psi\}$. We are then allowed to write, for a fixed $t_{0} > 0$, 
\[
(-\Delta)^su(t_0, \cdot) - \pazocal{R}u(t_0, \cdot)=-\partial_t u(t_0, \cdot) \le 0  \ \text{ in } \ \{u(t_0, \cdot)>\psi\},
\] which is \eqref{diffset}. 

Next, since $\partial_t u=0$ a.e. on the contact set $\{u=\psi\}$, we have (see \Cref{uniformbound} and \Cref{cor:lipschitz}) that
\begin{equation}\label{staciobound}
0\leq (-\Delta)^su-\pazocal{R}u <\infty.
\end{equation} for almost every $(t,x)\,\in \{u = \psi\}$.

However, we need the same bound to hold for a.e. $x \in \mathbb{R}^n$, for every $t_0 \in (0,T]$. Note that Lipschitz continuity of $u$, see \Cref{cor:lipschitz}, implies that the map $t \longmapsto u(t, \cdot)\in L^2_{\operatorname{loc}}(\mathbb{R}^n)$ is uniformly continuous. In turn, by \eqref{equalset}, this implies weak continuity of the map
\begin{equation}\label{eqn:weakly-continuous}
t \longmapsto (-\Delta)^su(t, \cdot)-\pazocal{R}u(t, \cdot)\in L^2_{\operatorname{loc}}(\mathbb{R}^n).
\end{equation} Now, consider $\epsilon>0$, $A\subset \{u(t_0, \cdot)=\psi\}$ a bounded Borel set, multiply \eqref{staciobound} by $\chi_{[t_0-\epsilon,t_0]}\chi_A$, and integrate to obtain
\[\begin{split}
0 \le \int_{[t_0-\epsilon,t_0] \times A} \Big[ (-\Delta)^su-\pazocal{R}u \Big] \le C \, |A| \, \epsilon,
\end{split}\]
because, by \Cref{lem:semiconvexity}, $\{u(t,\cdot)=\psi\}$ is decreasing in time and thus so is $[t_0-\epsilon,t_0]\times A\subset \{u=\psi\}$. Since the map in \eqref{eqn:weakly-continuous} is weakly continuous, we obtain as $\epsilon \to 0$:
\[
0 \le \int_A \Big[ (-\Delta)^su (t_{0}, \cdot) -\pazocal{R}u(t_{0}, \cdot) \Big] \le C \, |A|,
\] for all bounded Borel set $A\subset \{u(t_0, \cdot)=\psi\}$. This concludes the proof.
\end{proof}

\section{H\"{o}lder-space decay of fractional Laplacian}

\noindent For a fixed $t > 0$, we assume, without loss of generality, that $0\in\partial\{u(t,\cdot)=\psi\}$ and consider the $L_a$-harmonic function $v: \mathbb{R}^n\times \mathbb{R}^+ \longrightarrow \mathbb{R}$ given by\footnote{By $\pazocal{R}u(t,0)$ we mean the evaluation of the function $\pazocal{R}u(t,\cdot)$ at the point $x = 0$.}
\[
v(x,y) \coloneqq u(t,x,y) + \frac{\pazocal{R}u(t,0)}{1-a}y^{1-a},
\]
where $u(t,x,y)$ denotes the harmonic extension of $u(t,x)$ to the upper half space, that is,
\[
\begin{cases}
L_au(t,x,y) \coloneqq \operatorname{div}_{x,y} \big[ y^a\nabla_{x,y}u(t,x,y) \big] =0 \ \text{ for } x \in \mathbb{R}^{n} \ \text{ and }  \ y>0, \\
u(t,x,0)=u(t,x).
\end{cases} 
\] See, for instance, Caffarelli-Silvestre \cite{cafsil} where the authors characterize the fractional Laplacian as\footnote{Actually, we have $\lim_{y\rightarrow 0^+}y^a\partial_yu(t,x,y)=-c_{n,a}(-\Delta)^su(t,x)$, and so we are taking the normalization constant as $c_{n,a}=1$ for simplicity.}
\begin{equation}\label{fractional}
\lim_{y\rightarrow 0^+} y^a u_y(t,x,y)=-(-\Delta)^su(t,x) \ \text{ with } \ a=1-2s.
\end{equation}

By \Cref{lem:semiconvexity}, we have
\[
u(t,x+h,0)+u(t,x-h,0)-2u(t,x,0)\leq-2C_0|h|^2 \ \text{ for every } \ h\in\mathbb{R}^n,
\] so that the maximum principle implies
\[
u(t,x+h,y)+u(t,x-h,y)-2u(t,x,y)\leq-2C_0|h|^2 \ \text{ for every } \ h\in\mathbb{R}^n \ \text{ and } \ y > 0.
\] This means that $u(t,x,y)$ is $C_0$-semiconvex with respect to $x$ for all $y\geq0$ and, in particular,
\[
\partial_y \big( y^au_y(t,x,y) \big) \le nC_0y^a.
\]
Now, consider the function
\[
\tilde{v}(x,y)\coloneqq v(x,y)-\psi(x)
\] and set $\Lambda \coloneqq \{\tilde{v}(x,0)=0\}=\{v(x,0)=\psi(x)\}$.

\begin{lemma}
\label{lem:b1b5} The following properties hold.
\begin{enumerate}[$(a)$]
	\item \label{b1} We have $\tilde{v} \ge 0$ in the set $\mathbb{R}^n\times\mathbb{R}^+ \setminus\Lambda\times\{0\}$;
	\vspace{2mm}
	\item \label{b2} The function $\tilde{v}$ is $2C_0$-semiconvex with respect to $x$ for all $y\geq0$ and
	\[
	\partial_y \big(y^a\tilde{v}_y(x,y)\big) \le 2nC_0y^a;
	\]
	\item \label{b3} For a.e. $x\in\Lambda$, 
	\[
	\lim_{y\rightarrow0^+}y^a\tilde{v}_y(x,y)\leq C_1|x|^\gamma
	\] and, for all $x\in\mathbb{R}^n\setminus\Lambda$, 
	\[
	\lim_{y\rightarrow0^+}y^a\tilde{v}_y(x,y)\geq -C_1|x|^\gamma;
	\]
	
	\item \label{b4} For all $x\in\Lambda$,
	\[
	\tilde{v}(x,y)-\tilde{v}(x,0) \le \frac{nC_0}{1+a}y^2+\frac{C|x|^\gamma}{1-a} y^{1-a};
	\]
\end{enumerate}
\end{lemma}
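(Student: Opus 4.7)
My strategy is to leverage the Caffarelli-Silvestre extension viewpoint and combine it with the facts already established: the semiconvexity of $u(t, \cdot)$ from \Cref{lem:semiconvexity}, the pointwise comparison between $(-\Delta)^s u$ and $\pazocal{R}u$ from \Cref{comparison fractional and R}, and the H\"older regularity of $\pazocal{R}u$ recorded in \eqref{gamma}. Since $v$ is $L_a$-harmonic by construction, $\tilde{v}$ satisfies $L_a \tilde{v} = -y^a \Delta \psi(x)$, and the four claims describe how properties of $u(t, \cdot)$ at the trace $y = 0$ propagate into the upper half-space via this almost-$L_a$-harmonic extension. I would handle the parts in the order (b), (c), (d), (a), since (d) depends on (b) and (c), while (a) is structurally separate.

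For (b), the second difference $\Delta_h^2 u(t, x, y) := u(t, x+h, y) + u(t, x-h, y) - 2 u(t, x, y)$ is $L_a$-harmonic in $(x, y)$ with boundary trace bounded below by $-2 C_0 |h|^2$ (by \Cref{lem:semiconvexity}), so the maximum principle propagates this bound to every $y \ge 0$; adding the $x$-independent correction $\tfrac{\pazocal{R}u(t, 0)}{1 - a} y^{1 - a}$ and subtracting the $\|D^2 \psi\|_{L^\infty} \le C_0$-semiconcave function $\psi$ yields the $2C_0$-semiconvexity of $\tilde{v}(\cdot, y)$. The bound on $\partial_y(y^a \tilde{v}_y)$ follows from the identity $\partial_y(y^a \tilde{v}_y) = -y^a \Delta_x \tilde{v} - y^a \Delta \psi$ (consequence of $L_a \tilde{v} = -y^a \Delta \psi$), together with the semiconvexity bound $\Delta_x \tilde{v} \ge -2nC_0$ and the $W^{2,\infty}$ bound $-\Delta \psi \le n C_0$. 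For (c), the Caffarelli-Silvestre formula \eqref{fractional} yields
\[
\lim_{y \to 0^+} y^a \tilde{v}_y(x, y) = -(-\Delta)^s u(t, x) + \pazocal{R}u(t, 0),
\]
and both bounds are direct consequences of \Cref{comparison fractional and R} together with \eqref{gamma}: on $\Lambda$, $(-\Delta)^s u(t, x) \ge \pazocal{R}u(t, x)$ for a.e. $x$, so the limit is bounded above by $\pazocal{R}u(t, 0) - \pazocal{R}u(t, x) \le C_1 |x|^\gamma$, and the opposite inequality on $\mathbb{R}^n \setminus \Lambda$ yields the lower bound.

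For (d), I integrate $\partial_s(s^a \tilde{v}_y(x, s)) \le 2nC_0 s^a$ on $[0, y]$ to obtain
\[
y^a \tilde{v}_y(x, y) \le \lim_{s \to 0^+} s^a \tilde{v}_y(x, s) + \tfrac{2nC_0}{1 + a} y^{1 + a} \le C_1 |x|^\gamma + \tfrac{2nC_0}{1 + a} y^{1 + a}
\]
for $x \in \Lambda$ by (c); dividing by $y^a$ and integrating on $[0, y]$ once more produces the stated bound. For (a), the boundary trace satisfies $\tilde{v}(\cdot, 0) = u(t, \cdot) - \psi \ge 0$ by the supersolution property (with equality exactly on $\Lambda$); in the interior $y > 0$ I would compare $v$ with the $L_a$-harmonic extension $\psi^{L_a}$ of $\psi$, observing that $v - \psi^{L_a}$ is $L_a$-harmonic with non-negative trace and hence non-negative, which reduces the claim to a sign estimate on $\psi^{L_a} - \psi + \tfrac{\pazocal{R}u(t, 0)}{1 - a} y^{1 - a}$, with the $y^{1-a}$ correction built into $v$ designed precisely to offset the leading-order trace of $\psi^{L_a} - \psi$ as $y \to 0^+$.

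The main obstacle I anticipate is part (a): the $L_a$-maximum principle does not apply to $\tilde{v}$ directly (it is $L_a$-subharmonic only up to the bounded source $-y^a \Delta \psi$), the upper half-space is unbounded, and the sign of the correction $\tfrac{\pazocal{R}u(t, 0)}{1 - a} y^{1 - a}$ is not controlled a priori, so localization in a bounded region or a careful barrier argument exploiting the smoothness of $\psi$ and \Cref{comparison fractional and R} seems unavoidable. Parts (b)--(d), by contrast, are fairly routine once the extension framework and the preceding lemmas are in hand.
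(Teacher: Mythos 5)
Your treatment of \eqref{b2}--\eqref{b4} coincides with the paper's argument: semiconvexity of $u(t,\cdot,y)$ in $x$ is propagated from the trace by the maximum principle, \eqref{b3} follows from $\lim_{y\to0^+}y^a\tilde{v}_y=-(-\Delta)^su(t,x)+\pazocal{R}u(t,0)$ combined with \Cref{comparison fractional and R} and \eqref{gamma}, and \eqref{b4} is obtained by integrating twice in $y$. One small remark on \eqref{b2}: routing the bound on $\partial_y(y^a\tilde{v}_y)$ through the identity $L_a\tilde{v}=-y^a\Delta\psi$ makes you pay for both $\Delta_x\tilde{v}\ge-2nC_0$ and $-\Delta\psi\le nC_0$, which yields $3nC_0y^a$ rather than $2nC_0y^a$; it is cleaner to note that $y^a\tilde{v}_y=y^au_y+\pazocal{R}u(t,0)$ (the $\psi$ term is $y$-independent and the $y^{1-a}$ correction contributes a constant), so the estimate is inherited verbatim from $\partial_y(y^au_y)\le nC_0y^a$, which is established just before the lemma. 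Also, \eqref{b3} only holds for a.e.\ $x\in\Lambda$, so \eqref{b4} is first obtained a.e.\ and then extended to all of $\Lambda$ by continuity of $\tilde{v}$ — a one-line point the paper makes and you omit.

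The real divergence is \eqref{b1}, which you identify as the main obstacle and for which your sketch does not close (you concede the sign of the correction $\tfrac{\pazocal{R}u(t,0)}{1-a}y^{1-a}$ is uncontrolled). The paper intends \eqref{b1} only as the trace statement $\tilde{v}(x,0)=u(t,x)-\psi(x)\ge0$, which indeed "only restates that $u\ge\psi$", and this is all that is used downstream: in \Cref{lem:inf-to-L_infty} the lower bound on $\tilde{v}$ in $\Gamma_r$ is anchored at $\tilde{v}(x,0)\ge0$. The interior positivity $\tilde{v}(x,y)\ge0$ for $y>0$ that you attempt is not only unnecessary but should be expected to fail: near a contact point one has $\tilde{v}(x,y)=\tfrac{1}{1-a}\big(\pazocal{R}u(t,0)-(-\Delta)^su(t,x)\big)y^{1-a}+O(y^2)$, whose leading coefficient is $\le0$ a.e.\ on $\Lambda$ by \eqref{equalset}; consistently, the paper itself only ever asserts the negative lower bounds $\tilde{v}(0,y)\ge-\bar{C}y^{\alpha+2s}$ (used in \Cref{lem:zero-not-hull}) and $\tilde{v}\ge-\tfrac{C\eta^{1-a}}{1-a}r^{\alpha+2s}$ on $\Gamma_r$, which would be vacuous if interior nonnegativity held. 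So for \eqref{b1} you should simply record the boundary inequality and drop the comparison with the $L_a$-harmonic extension of $\psi$.
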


\begin{proof}  The first item only restates that $u(t,x) \ge \psi(x)$. Next, \eqref{b2} follows from the semiconvexity of $v$ and $\psi$: we obtain that $\tilde{v}$ is $2C_0$-semiconvex with respect to $x$ for all $y \ge 0$, which implies
\[
\partial_y(y^a\tilde{v}_y(x,y))\leq2nC_0y^a.
\] In order to show \eqref{b3}, we first use \eqref{equalset} and \eqref{gamma} to conclude that, for a.e. $x\in\Lambda$,
\[
\lim_{y\rightarrow0^+}y^a\tilde{v}_y(x,y)=-(-\Delta)^su(t,x)+\pazocal{R}u(t,0)\leq |\pazocal{R}u(t,x)-\pazocal{R}u(t,0)|\leq C_1|x|^\gamma.
\] Then, we use \eqref{diffset} (and again \eqref{gamma}) to obtain that, for every $x\in\mathbb{R}^n\setminus\Lambda$,
\[
\lim_{y\rightarrow0^+}y^a\tilde{v}_y(x,y)=-(-\Delta)^su(t,x)+\pazocal{R}u(t,0)\geq \pazocal{R}u(t,0)-\pazocal{R}u(t,x)\geq-C_1|x|^\gamma.
\] Now we prove \eqref{b4}. For a.e. $x\in\Lambda$, we have
\[
\begin{split}
\tilde{v}(x,y)-\tilde{v}(x,0) & = \int_0^y\frac{s^a\tilde{v}_y(x,s)}{s^a} \, \dd s 
= \int_0^y \frac{1}{s^a} \bigg( \int_0^s\partial_y(\tau^a\tilde{v}_y(x,\tau)) \, \dd \tau + \lim_{z\rightarrow0^+} z^a\tilde{v}_y(x,z) \bigg) \, \dd s \\ 
     & \le \int_0^y\frac{1}{s^a}\left(\frac{2nC_0s^{a+1}}{a+1}+C|x|^\gamma\right) \, \dd s = \frac{nC_0}{1+a}y^2+\frac{C|x|^\gamma}{1-a} y^{1-a},
\end{split}
\] where the inequality relies in \eqref{b2} and \eqref{b3} above. Moreover, by continuity, the estimate holds for every $x\in\Lambda$.
\end{proof}

Now, let us analyze a first decay property of $y^a\tilde{v}_y$.

\begin{proposition}\label{prop:inf1}
There exists $c  > 0$ and $\mu \in (0,1)$ for which
\begin{equation}\label{prop1}
\inf_{\Gamma_{4^{-k}}}  y^a\tilde{v}_y (x,y) \ge -c \mu^k,
\end{equation} where $\Gamma_r \coloneqq B_r\times[0,\eta r]$ and $\eta \coloneqq \sqrt{\frac{1+a}{2n}}$.
\end{proposition}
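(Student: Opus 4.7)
I would proceed by induction on $k\ge 0$ with a dichotomy in the inductive step. The base case $k=0$ reduces to a global $L^\infty$ bound on $W(x,y):=y^a\tilde v_y(x,y)$ in $\Gamma_1$. Writing $v = u + (\pazocal{R}u(t,0)/(1-a))y^{1-a}$, one has $W = y^a u_y + \pazocal{R}u(t,0)$; the trace $\lim_{y\to 0^+} y^a u_y = -(-\Delta)^s u$ is bounded thanks to \Cref{comparison fractional and R} and \Cref{cor:lipschitz}, while \Cref{lem:b1b5}\eqref{b2} controls the increment of $y^a\tilde v_y$ from $y=0$ up to $y=\eta$. Any $c$ majorizing this bound serves as the base constant.

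The crucial structural fact used in the inductive step is that $W = y^a\tilde v_y$ is $L_{-a}$-harmonic in the open upper half-space. Since $\tilde v_y = v_y$ (as $\psi$ does not depend on $y$) and $v$ is $L_a$-harmonic, a direct computation gives $L_{-a}(y^a v_y) = 0$ for $y > 0$. This is what converts the pointwise trace information from \Cref{lem:b1b5}\eqref{b3} into interior information through a comparison argument.

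Granted this, assume $\inf_{\Gamma_{4^{-k}}} W \ge -c\mu^k$ and set $m_k := \max\{0, -\inf_{\Gamma_{4^{-k}}} W\} \le c\mu^k$. If already $m_k \le c\mu^{k+1}$, the conclusion is immediate on $\Gamma_{4^{-(k+1)}}\subset \Gamma_{4^{-k}}$. Otherwise, the function $F := W + m_k \ge 0$ is $L_{-a}$-harmonic in the interior of $\Gamma_{4^{-k}}$ and its trace satisfies $F(x,0^+) \ge m_k - C_1 |x|^\gamma$ by \Cref{lem:b1b5}\eqref{b3}. For $\mu \ge 4^{-\gamma}$, the bound $m_k - C_1|x|^\gamma \ge m_k/2$ holds on a substantial portion of $B_{4^{-k}}$, and a comparison of $F$ with a suitable $L_{-a}$-(sub)harmonic barrier then yields
\[
F \ge \theta\, m_k \quad \text{on } \Gamma_{4^{-(k+1)}}
\]
for some universal $\theta \in (0,1)$, that is, $W \ge -(1-\theta) m_k$ there. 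Choosing $\mu := \max\{4^{-\gamma},\, 1-\theta\} \in (0,1)$ and $c$ from the base case closes the induction.

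The principal obstacle is the barrier/comparison step: one has to exhibit an explicit $L_{-a}$-(sub)harmonic function on $\Gamma_r$ that dominates $-m_k$ on the top face $B_r\times\{\eta r\}$ and the lateral face $\partial B_r \times [0,\eta r]$ (both of which are handled by the inductive hypothesis), and is controlled by roughly $m_k/2$ on the portion of the bottom face $\{y=0\}$ where $F(\cdot,0^+) \ge m_k/2$. The particular choice $\eta = \sqrt{(1+a)/(2n)}$ is not arbitrary: integrating the one-sided estimate in \Cref{lem:b1b5}\eqref{b2} twice from $0$ to $y = \eta r$ produces a quadratic-in-space term of size exactly $r^2/2$, matching the scales that show up in \Cref{lem:b1b5}\eqref{b4} and keeping the dyadic comparison scheme internally consistent across scales.
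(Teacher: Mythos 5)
Your skeleton (induction, dyadic rescaling, the $L_{-a}$-harmonicity of $y^a\tilde v_y$, and a diminish-of-oscillation step) matches the paper's, and your base case is fine. But the inductive step has a genuine gap exactly at the point you defer to a "barrier/comparison step". You assert that the trace satisfies $F(x,0^+)\ge m_k-C_1|x|^\gamma$ "by \Cref{lem:b1b5}\eqref{b3}", hence $F(\cdot,0^+)\ge m_k/2$ on a substantial portion of the bottom face. That is not what \Cref{lem:b1b5}\eqref{b3} gives: the lower bound $\lim_{y\to0^+}y^a\tilde v_y\ge -C_1|x|^\gamma$ holds only for $x\notin\Lambda$, while on $\Lambda$ one has only the \emph{upper} bound (the trace there equals $-\big[(-\Delta)^su-\pazocal{R}u(t,0)\big]$, which is precisely the quantity whose negativity the proposition is trying to control, and can be as low as $-m_k$). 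So the set where your $F(\cdot,0^+)\ge m_k/2$ is contained in $B_{\rho 4^{-k}}\setminus\Lambda$; since $0\in\partial\Lambda$, the contact set may occupy all but an arbitrarily small fraction of that ball, and no barrier or growth-lemma/weak-Harnack argument can extract a universal $\theta$ from positivity of the trace on a set of unknown (possibly vanishing) density and capacity. On the rest of the bottom you only know $F\ge 0$, which the minimum principle cannot improve. This is not a technicality: producing the quantified improvement is the entire content of the proposition.

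The paper's device for circumventing this is the auxiliary $L_a$-harmonic function $\bar W=\bar V+\tfrac{\|\pazocal{R}u(t,\cdot)\|_{C^\gamma}}{c4^{\gamma k}\mu^k(1-a)}y^{1-a}-\tfrac{L}{c4^{2(1-s)k}\mu^k}\big(|x|^2-\tfrac{n}{1+a}y^2\big)$: its sign conditions at $\{y=0\}$ ($\bar W<0$ on $\Lambda\setminus\{0\}$, $\lim_{y\to0^+}y^a\bar W_y>0$ off $\Lambda$) together with Hopf's lemma force its nonnegative maximum to the top or lateral face of $\Gamma_{1/8}$; the semiconvexity of $\bar V$ then spreads the resulting largeness of $\bar V$ over a \emph{half-ball of fixed size} $\operatorname{HB}_{1/2}(\bar x,\bar y)$, and integrating $\partial_y(y^a\bar V_y)\le Cy^a$ downward converts this into $\lim_{y\to0^+}y^a\bar V_y\ge-\tfrac12$ on that half-ball. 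Only then does Harnack's inequality for the nonnegative $L_{-a}$-harmonic function $y^a\bar V_y+1$ (normalized by the induction hypothesis) yield a universal $\theta<1$ on $B_{5/8}$, which is finally carried up to all of $\Gamma_{1/4}$ by another integration in $y$. None of these ingredients appear in your outline. Two smaller points: you need $\mu\ge 4^{-2+2s}$ as well as $\mu\ge4^{-\gamma}$, to absorb the rescaled semiconvexity errors; and the choice $\eta=\sqrt{(1+a)/(2n)}$ is dictated not only by the double integration of \Cref{lem:b1b5}\eqref{b2} but by the requirement that the $L_a$-harmonic quadratic $|x|^2-\tfrac{n}{1+a}y^2$ keep a sign on the lateral boundary of $\Gamma_r$.
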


\begin{proof}
The result follows by induction. To obtain the case $k=0$, we note
\[
\begin{split}
L_{-a}(y^a\tilde{v}_y) & = \operatorname{div}_{x,y} \left( y^{-a} \nabla_{x,y} (y^a\tilde{v}_y)\right) = \operatorname{div}_{x,y}\left(y^{-a}\nabla_{x,y} ( y^au_y ) \right) \\
& = \Delta_x (u_y) + \partial_y (y^{-a} \partial_y (y^au_y)) 
= \partial_y\left(\Delta_xu + y^{-a}\partial_y(y^au_y)\right) \\ &
= \partial_y(y^{-a}L_au(t,x,y))=0.
\end{split}\]
Then, since 
\[
\lim_{y\rightarrow0^+} y^a \tilde{v}_y(x,y) = - (-\Delta)^s u(t,x) + \pazocal{R}u(t,0)
\] is a bounded function, we obtain that $y^a\tilde{v}_y(x,y)$ remains bounded, for $y>0$, by the maximum principle. This is enough for the case $k = 0$.

Now, assume that \eqref{prop1} holds for some $k \in \mathbb{N}$, where $c $ and $\mu$ are to be chosen later. Set 
\[\tilde{V}(x,y) \coloneqq \frac{4^{2sk}}{c  \mu^{k}} \, \tilde{v} \left(\frac{x}{4^{k}},\frac{y}{4^{k}}\right).
\] The induction hypothesis (recall $a=1-2s$ and $v_{y} = \tilde{v}_{y}$) reads
\begin{equation}\label{eqn:prop-inf1-ii}
\begin{split}
\inf_{\Gamma_1} (y^a\tilde{V}_y) = \inf_{\Gamma_1} (y^a\bar{V}_y) = \frac{1}{c  \mu^{k}} \, \inf_{\Gamma_1} \left[ \frac{y^{a}}{4^{ka}} \, \tilde{v}_y \left( \frac{x}{4^{k}}, \frac{y}{4^{k}} \right) \right] = \frac{1}{c  \mu^{k}} \, \inf_{\Gamma_{4^{-k}}} y^{a} \tilde{v}_y (x, y) \geq -1.
\end{split}
\end{equation} So, in this renormalized notation, it is enough to show that
\[
\inf_{\Gamma_{1/4}} y^{a} \tilde{V} (x,y) > - \mu.
\] In order to do that, consider the auxiliary function
\begin{equation}\label{vbar}
\bar{V}(x,y) \coloneqq \frac{4^{2sk}}{c  \mu^{k}} \,  \bar{v}\left(\frac{x}{4^{k}},\frac{y}{4^{k}}\right) \quad \text{where} \quad  \bar{v}(x,y) \coloneqq v(x,y) - \Big( \psi(0) + \nabla \psi(0) \cdot x \Big).
\end{equation}
Both $\bar{v}$ and $\bar{V}$ are $L_a$-harmonic functions. Also, by using the $C_0$-semiconvexity of $v$, we obtain 
\begin{equation}\label{eqn:prop-inf1-i}
\begin{split}
|\tilde{V}(x,y)-\bar{V}(x,y)| & \leq \frac{4^{2sk}}{c  \mu^{k}} \, \left| \tilde{v}\left(\frac{x}{4^{k}},\frac{y}{4^{k}}\right)-\bar{v}\left(\frac{x}{4^{k}},\frac{y}{4^{k}}\right)\right| \\ & \le \frac{4^{2sk}}{c  \mu^{k}} \, \big|\psi(0) - \psi(4^{-k}x) + \nabla\psi(0) \cdot (4^{-k} x) \big| \\ & \le \frac{C_0 4^{2(s-1)k}}{2c  \mu^{k}} |x|^2.
\end{split}
\end{equation} Moreover, \Cref{lem:b1b5}\eqref{b2} yields
\begin{equation}\label{eqn:prop-inf1-iii}
\partial_y(y^a\tilde{V}_y)=\partial_y(y^a\bar{V}_y)\leq\frac{2nC_0}{c  4^{2(1-s)k}\mu^{k}} \, y^a.
\end{equation} Furthermore, both $\tilde{V}$ and $\bar{V}$ are semiconvex in the set $\Gamma_1$ with constant $\frac{2C_0}{c  4^{2(1-s)k}\mu^{k}}$.

Let us fix $L \gg C_0$ yet to be chosen. As can be checked below, we can assume this constant depends only $n$, $a$, and $C_{0}$. Set
\[
\bar{W}(x,y) \coloneqq \bar{V}(x,y) + \frac{\|\pazocal{R}u(t, \cdot)\|_{C^\gamma(B_1)}}{c  4^{\gamma k} \mu^{k} (1-a)} \, y^{1-a} - \frac{L}{c  4^{2(1-s)k} \mu^{k}} \left( |x|^2 - \frac{n}{1+a} y^2 \right).
\] We have the following properties:
\begin{enumerate}[$(i)$]
	\item By a straightforward computation, $\bar{W}$ is an $L_a$-harmonic function.
	
	\item The semiconvexity of $\psi$ implies that, for every $x\in\Lambda\setminus\{0\}$,
	\[
	\begin{split}
	\bar{W}(x,0) & = \frac{4^{2sk}}{K_{1} \mu^{k}} \Bigg[ v \left( \frac{x}{4^{k}}, 0 \right) - \psi(0) - \nabla \psi(0) \cdot \frac{x}{4^{k}} - L \left| \frac{x}{4^{k}} \right|^{2} \Bigg] \\ 
	&  \le \frac{4^{2sk}}{K_{1} \mu^{k}} \Bigg[ v \left( \frac{x}{4^{k}}, 0 \right) - \psi\left( \frac{x}{4^{k}} \right) - (L - C_{0}) \left| \frac{x}{4^{k}} \right|^{2} \Bigg] < 0.
	\end{split}
	\]
	
	\item By the continuity of $v$,
	\[
	\lim_{(x,y)\rightarrow(0,0)} \bar{W}(x,y) = \frac{4^{2sk}}{c  \mu^{k}} \Big[ v(0,0) - \psi(0) \Big] = 0.
	\]
	\item\label{propiv} By \Cref{lem:b1b5}\eqref{b3}, we have, for $|x|< B_{1/8}\setminus\Lambda$,
	\[
	\lim_{y\rightarrow0^+}y^a\bar{W}_y(x,y)>0.
	\] Indeed, we have
		\[
		\begin{split}
		y^{a} \bar{W}_{y} & = y^{a} \bar{V}_{y} + \frac{\|\pazocal{R}u(t, \cdot)\|_{C^\gamma(B_1)}}{c  4^{\gamma k} \mu^{k}} + \frac{2nL}{c  4^{2(1-s)k} \mu^{k} (1+a)} \, y^{1+a} \\
		& = \frac{1}{c  \mu^{k}} \left[ \left( \frac{y}{4^{k}} \right)^{a} \tilde{v}_y \left( \frac{x}{4^{k}}, \frac{y}{4^{k}} \right) + \frac{\|\pazocal{R}u(t, \cdot)\|_{C^\gamma(B_1)}}{4^{\gamma k}} + \frac{2nL}{4^{2(1-s)k} (1+a)} \, y^{1+a} \right].
		\end{split}
		\] Let $y \to 0^{+}$ and recall that $C_{1}$ is a H\"older constant for $\pazocal{R}u(t, \cdot)$ to infer that
		\[
		\lim_{y \to 0^{+}} y^{a} \bar{W}_{y} \ge \frac{1}{c  4^{\gamma k} \mu^{k}} \left[ - C_{1} |x|^{\gamma}  + \|\pazocal{R}u(t, \cdot)\|_{C^\gamma(B_1)} \right] > 0.
		\] 
\end{enumerate}
In particular, \eqref{propiv} implies that for a fixed $x \in B_{1/8} \setminus \Lambda$, $\bar{W}(x,y)>\bar{W}(x,0)$ for all $(x,y)\in (B_{1/8} \setminus \Lambda)\times(0,\delta)$, once $\delta > 0$ is small enough so that $\bar{W}_{y}(x,y)>0$ for all $y\in[0,\delta)$.

These properties and Hopf's Lemma (see, for instance, \cite[Theorem 3.5]{GiTr-01}) imply that the maximum of $\bar{W}$ is non-negative and attained on $\partial\Gamma_{1/8}\setminus \{y=0\}$. Hence, this maximum is achieved either at a point on the top $\partial \Gamma_{1/8} \cap \{y = \eta/8\}$ of the cylinder or at a point on the side $\partial B_{1/8} \times (0, \eta/8)$. In what follows, we analyze each case separately.

If the maximum is attained on $\partial \Gamma_{1/8}\cap \{y=\eta /8\}$, there exists $x_0 \in B_{1/8}$ for which $\bar{W}(x_{0}, \eta/8) \ge 0$. Thus, we have
\[
\bar{V}(x_0,\eta/8) + A \, \frac{\|\pazocal{R}u(t, \cdot)\|_{C^\gamma(B_1)}}{c  4^{\gamma k} \mu^{k}} \ge - B \, \frac{L}{c  4^{2(1-s)k} \mu^{k}},
\] where $A \coloneqq \frac{\eta^{1-a}}{(1-a)8^{1-a}}$ and $B \coloneqq \frac{n \eta^{2}}{64(a+1)}$. Since $\eta$ depends only of $n$ and $a$, so do the positive constants $A$ and $B$. By the semiconvexity of $\bar{V}$, see \eqref{eqn:prop-inf1-iii}, we can write
\[
\bar{V}(x,\eta/8) \ge \bar{V}(x_{0},\eta/8) + \langle \nabla_{x} \bar{V}(x_{0},\eta/8), x - x_{0} \rangle - \frac{2C_0}{c  4^{2(1-s)k}\mu^{k}} \, |x - x_{0}|^{2},
\] so that, in the half-ball
\[
\operatorname{HB}_{1/2}(x_{0}, \eta/8) \coloneqq \{ z \in B_{1/2} (x_{0}) ; ~ \langle \nabla_{x} \bar{V}(x_{0},\eta/8), z - x_{0} \rangle \ge 0 \},
\] there holds
\begin{equation}\label{eqn:prop-inf1-semiconv}
\bar{V}(x,\eta/8) + A \, \frac{\|\pazocal{R}u(t, \cdot)\|_{C^\gamma(B_1)}}{c  4^{\gamma k} \mu^{k}} \ge  - \frac{BL + C_{0}}{c  4^{2(1-s)k} \mu^{k}} \ge - \frac{2 B L}{c  4^{2(1-s)k} \mu^{k}}.
\end{equation} In the last inequality, we use the fact that $L$ is choosen much larger than $C_{0}$.
%
Now, recall $\bar{V}_y=\tilde{V}_y$; hence, \Cref{lem:b1b5}\eqref{b3} gives 
\begin{equation}\label{lim}
\begin{split}
\displaystyle \lim_{y\rightarrow 0^+}y^a \bar{V}_y(x,y)\leq \frac{C_1}{c  4^{\gamma k}\mu^{k}} \, |x|^\gamma& \ \text{ if } \ \tilde{V}(x,0)=0 \ \text{ and} \\ 
\displaystyle \lim_{y\rightarrow 0^+}y^a \bar{V}_y(x,y)\geq - \frac{C_1}{c  4^{\gamma k}\mu^{k}} \, |x|^\gamma& \ \text{ if } \ \tilde{V}(x,0)>0.
\end{split}
\end{equation} Integrate \eqref{eqn:prop-inf1-iii} with respect to $y$ in the interval $[0,y]$, with $y<\eta/8$ to obtain
\[
\lim_{y \to 0^{+}} y^a\bar{V}_y(x,y) + \frac{2nC_0 \eta^{a + 1}}{c  4^{2(1-s)k}\mu^{k} (a + 1) 8^{a + 1}} \ge \frac{\eta^{a} \bar{V}_{y} (x,y)}{8^{a}}.
\] Integrating the inequality above with respect to $y$ in the interval $[0,\eta/8]$ combined with \eqref{eqn:prop-inf1-semiconv} and \eqref{lim}  yield, for all $x\in \operatorname{HB}_{1/2}(x_0,\eta/8)$, 
\[
\lim_{y\rightarrow 0^+}y^a\bar{V}_y(x,y) + A \frac{\|\pazocal{R}u(t, \cdot)\|_{C^\gamma(B_1)}}{c  4^{\gamma k} \mu^{k}} \ge  - B' \frac{L}{c (4^{2(1-s)}\mu)^{k}}
\] where $A' = \frac{\eta^{a-1}A}{8^{a-1}}$ and $B' = \frac{2B \eta^{a-1}}{8^{a-1}} \! + \!\frac{2n \eta^{a+1}}{(a+1) 8^{a+1}}\!+\! \frac{1}{4}$ are positives constants that depend only on $a$ and $n$. This is again possible because of the choice $L \gg C_{0}$.


On the other hand, suppose the non-negative maximum of $\bar{W}$ is attained on a point $(x_0,y_0) \in \partial B_{1/8} \times (0, \eta/8)$. The definition of $\eta$ implies $0 \le y_{0}^{2} \le \frac{1 + a}{2n 8^{2}} = \frac{1 + a}{2n} |x_{0}|^{2}$. Thus, since $\bar{W} (x_{0}, y_{0}) \ge 0$,
\[
\bar{V}(x_0,y_0) + D' \, \frac{\|\pazocal{R}u(t, \cdot)\|_{C^\gamma(B_1)}}{c  4^{\gamma k} \mu^{k}} \ge \frac{L}{2^{7} c  4^{2(1-s)k} \mu^{k}},
\] where $D' = \frac{\eta^{2(1-a)}}{(1-a)8^{2(1-a)}}$. We can repeat the argument of the previous case to obtain that 
\[
\lim_{y\rightarrow 0^+}y^a\bar{V}_y(x,y) + D'' \, \frac{\|\pazocal{R}u(t, \cdot)\|_{C^\gamma(B_1)}}{c  4^{\gamma k} \mu^{k}} \ge -B''\frac{L}{c (4^{2(1-s)}\mu)^{k}},
\] for all $x\in \operatorname{HB}_{1/2}(x_0,y_0)$, where $D'' = D'+\frac{\eta^{a-1}}{8^{a-1}}$, and $B'' = \frac{2B \eta^{a-1}}{8^{a-1}} \!+\! \frac{1}{4}$. 

In any case, there exist $C > 0$, $D > 0$,  $\bar{y}\in [0,\eta/8]$, and $\bar{x} \in B_{1/8}$ such that, for all $x\in \operatorname{HB}_{1/2}(\bar{x},\bar{y})$,
\[
\lim_{y\rightarrow 0^+} y^a \bar{V}_y (x,y) \ge - D \, \frac{\|\pazocal{R}u(t, \cdot)\|_{C^\gamma(B_1)}}{c  4^{\gamma k} \mu^{k}} - \frac{ C }{c  4^{2(1-s)k} \mu^{k}}.
\] We observe the constants above depend only on $n$, $a$, and $C_{0}$. The choices 
\[
\max\{4^{-\gamma},4^{-2 + 2s}\} \le \mu < 1 \ \text{ and } \ c  > 2\big( C + D \|\pazocal{R}u(t, \cdot)\|_{C^\gamma(B_1)} \big)
\] then provides us with
\begin{equation}\label{geq12}
\lim_{y\rightarrow 0^+}y^a\bar{V}_y(x,y)>-\frac{1}{2}.
\end{equation}
As in case $k=1$, we have that $y^a\bar{V}_y(x,y)$ solves $L_{-a}(y^a\bar{V}_y(x,y))=0$ in $\mathbb{R}^n\times\mathbb{R}^+$. From this, we now show that \eqref{geq12} and \eqref{eqn:prop-inf1-ii} imply that there exists $\theta<1$ such that, for every $x \in B_{1/4}$,
\begin{equation}\label{boundb14}
\left(\frac{\eta}{4}\right)^a\bar{V}_y(x,\eta/4)\geq -\theta.
\end{equation} Indeed, by the minimum principle, we have
\begin{equation}\label{prooftheta}
\inf_{x\in B_{5/8}}\lim_{y\rightarrow 0^+}y^a\bar{V}_y(x,y)\leq \inf_{(x,y)\in\Gamma_{5/8}}y^a\bar{V}_y(x,y)\leq \left(\frac{\eta}{4}\right)^a\bar{V}_y(x,\eta/4)
\end{equation}
for all $x\in B_{1/4}$. Then, \eqref{eqn:prop-inf1-ii} and Harnack's inequality yield
\[
1+\sup_{x\in B_{5/8}}\lim_{y\rightarrow 0^+}y^a\bar{V}_y(x,y)\leq C\left(\inf_{x\in B_{5/8}}\lim_{y\rightarrow 0^+}y^a\bar{V}_y(x,y)+1\right),
\]
for some constant $C>0$ depending only on $s$ and $n$.
 Since $ \operatorname{HB}_{1/2}(\bar{x},\bar{y})\subset B_{5/8}$, by \eqref{geq12}, we have
\[
\inf_{x\in B_{5/8}}\lim_{y\rightarrow 0^+}y^a\bar{V}_y(x,y)\geq \frac{1}{C}\sup_{x\in \operatorname{HB}_{1/2}(\bar{x},\bar{y})}\lim_{y\rightarrow 0^+}y^a\bar{V}_y(x,y)-1+\frac{1}{C}\geq \frac{1}{2C}-1\eqqcolon -\theta.
\] By the above and \eqref{prooftheta}, we conclude \eqref{boundb14}.
 
 Next, integrate \eqref{eqn:prop-inf1-iii} with respect to $y$ in the interval $[y, \eta/4]$ to obtain
\[
\left( \frac{\eta}{4} \right)^{a} \bar{V}_y (x, \eta/4) - y^a\bar{V}_y (x,y) \le \frac{2nC_0}{c  4^{2(1-s)k}\mu^{k} (a + 1)} \, \Big[ (\eta/4)^{a + 1} - y^{a + 1} \Big] \le \frac{\hat{C}}{c },
\] where $\hat{C} = \frac{2nC_0\eta^{a + 1}}{(a + 1)4^{a + 1}}$ is a positive constant that depends only on $n$, $a$, and $C_{0}$. We thus have
\[
y^a\bar{V}_y(x,y) \ge - \theta - \frac{\hat{C}}{c }.
\] First enlarge, if necessary, $c $ so that $\theta + \hat{C}/K_{1} < 1$; then, enlarge $\mu$ (if necessary) so that $\theta + \hat{C}/K_{1} < \mu < 1$. Therefore,
\[
y^a\tilde{V}_y(x,y) = y^a\bar{V}_y(x,y) > -\mu, 
\] for every $x \in B_{1/4}$ and every $y \in [0, \eta/4]$, which is what we wanted.
\end{proof}

Once \Cref{prop:inf1} is established, we show in a standard manner (see, for instance, \cite[Lemma 4.4]{CaFi-08}) how a bound from below of the form $\inf_{\Gamma_r}y^a\tilde{v}_y \ge - C r^\alpha$ provides control of the $L^{\infty}$-norm of $\tilde{v}$ in a smaller cylinder.

\begin{lemma}\label{lem:inf-to-L_infty}
For $C>0$, $\alpha\in(0,1)$, and $r\in(0,1]$ such that $\inf_{\Gamma_r}y^a\tilde{v}_y \ge - C r^\alpha$, there exists $M > 0$ for which
\[
\sup_{\Gamma_{r/8}}|\tilde{v}|\leq Mr^{\alpha+2s}.
\] Moreover, the constant $M$ is independent of $r$ and depends only on $C,\alpha,a$, and $C_0$.
\end{lemma}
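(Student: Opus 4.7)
The plan is to establish $|\tilde{v}| \le Mr^{\alpha+2s}$ on $\Gamma_{r/8}$ by proving the lower and upper bounds separately, since the hypothesis $y^a\tilde{v}_y \ge -Cr^\alpha$ is one-sided and each side of the bound has a different source.

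For the \emph{lower bound}, I would rewrite the hypothesis as $\tilde{v}_y(x,\tau) \ge -Cr^\alpha\tau^{-a}$ and integrate in $\tau$ from $0$ to $y\in[0,\eta r/8]$, combined with the non-negativity $\tilde{v}(x,0) = u(t,x)-\psi(x) \ge 0$, to obtain
\[
\tilde{v}(x,y) \ge -\frac{Cr^\alpha}{1-a}y^{1-a} \ge -\frac{C}{2s}\left(\frac{\eta}{8}\right)^{2s}r^{\alpha+2s},
\]
recalling that $1-a=2s$. The constant depends only on $C$, $a$, and $\eta$.

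For the \emph{upper bound}, the key idea is to exploit $0\in\Lambda$, which follows from the standing assumption $0\in\partial\{u(t,\cdot)=\psi\}$ and forces $\tilde{v}(0,0)=0$. Invoking \Cref{lem:b1b5}\eqref{b4} at $x=0$ makes the $C|x|^\gamma$-term vanish, yielding $\tilde{v}(0,y)\le \frac{nC_0}{1+a}y^2$. Combining this with the $2C_0$-semiconvexity in $x$ from \Cref{lem:b1b5}\eqref{b2} written symmetrically around $x=0$,
\[
\tilde{v}(x,y)+\tilde{v}(-x,y)-2\tilde{v}(0,y) \ge -2C_0|x|^2,
\]
and discarding the non-negative term $\tilde{v}(-x,y) \ge 0$ from \Cref{lem:b1b5}\eqref{b1}, I arrive at
\[
\tilde{v}(x,y) \le 2\tilde{v}(0,y)+2C_0|x|^2 \le \frac{2nC_0}{1+a}y^2 + 2C_0|x|^2 \le C'r^2
\]
for every $(x,y)\in\Gamma_{r/8}$, with $C'$ depending only on $n$, $a$, $C_0$, and $\eta$.

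The \emph{main obstacle} lies in the closing step: matching the upper bound $r^2$ with the target rate $r^{\alpha+2s}$, and keeping $M$ independent of the H\"older constant $C_1$ of $\pazocal{R}u$ and the exponent $\gamma$. The latter is resolved by invoking \eqref{b4} only at the single point $x=0$ where the $|x|^\gamma$-correction vanishes, so that the transverse control comes entirely from the ``free'' semiconvexity estimate \eqref{b2}. For the former, in the intended application to \Cref{prop:inf1} the choice $\mu \ge 4^{-(2-2s)}$ forces $\alpha=-\log_4\mu\le 2-2s$, so $\alpha+2s\le 2$; since $r\le 1$, we get $r^2\le r^{\alpha+2s}$, and combining with the lower bound produces $|\tilde{v}|\le Mr^{\alpha+2s}$ with $M$ depending only on $C,\alpha,a,C_0$ (and the dimension).
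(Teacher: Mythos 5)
Your lower bound is exactly the paper's: integrate the hypothesis in $y$ and use $\tilde v(\cdot,0)\ge 0$. The upper bound, however, contains a sign error that breaks the argument. Semiconvexity bounds the second difference from \emph{below}: \Cref{lem:b1b5}\eqref{b2} gives $\tilde v(x,y)+\tilde v(-x,y)-2\tilde v(0,y)\ge -2C_0|x|^2$, i.e.\ $\tilde v(x,y)\ge 2\tilde v(0,y)-\tilde v(-x,y)-2C_0|x|^2$. Discarding $\tilde v(-x,y)\ge 0$ only weakens this \emph{lower} bound; it does not produce $\tilde v(x,y)\le 2\tilde v(0,y)+2C_0|x|^2$ — that step would require semiconcavity, which is not available. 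The gap is not repairable by convexity alone: the function $h(x)=N(\langle e,x\rangle)^+$ is convex, non-negative, and vanishes at the origin, yet $\sup_{B_{r/8}}h=Nr/8$ is arbitrarily large, so "lower bound everywhere plus upper bound at the single point $x=0$" can never yield an interior upper bound for a semiconvex function.

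The missing ingredient is the Harnack inequality, which is how the paper closes the argument. One argues by contradiction: if $\tilde v(x_0,y_0)\ge Mr^{\alpha+2s}$ at some $(x_0,y_0)\in\Gamma_{r/8}$, integrating the hypothesis upward gives $\tilde v(x_0,\eta r/2)\gtrsim Mr^{\alpha+2s}$. The function $\bar v$ of \eqref{vbar} is $L_a$-harmonic, and the already-proved lower bound (plus $|\bar v-\tilde v|\le C_0r^2$) shows that $\bar v+\frac{Cr^{\alpha+2s}}{1-a}+C_0r^2\ge 0$ on $\Gamma_r$; Harnack's inequality in the ball $B_{\eta r/2}(x_0,\eta r/2)\subset\{y>0\}$ then transports the largeness from $(x_0,\eta r/2)$ to $(0,\eta r/2)$, where \Cref{lem:b1b5}\eqref{b4} at $x=0$ together with $\tilde v(0,0)=0$ forces $\tilde v(0,\eta r/2)\le Cr^2$. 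This yields a bound on $M$ and the contradiction. (Your closing remark about needing $\alpha+2s\le 2$ to compare $r^2$ with $r^{\alpha+2s}$ is a fair observation — the paper's contradiction also implicitly uses it — but it does not rescue the semiconvexity step.)
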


\begin{proof}
We consider only the case where $r > 0$ is small, for $\tilde{v}$ is globally bounded. By \Cref{lem:b1b5}\eqref{b1} and by our assumption, we have, for every $(x,y)\in \Gamma_r$,
\[
\tilde{v}(x,y) \ge \tilde{v}(x,0) - C r^\alpha \int_0^y \tau^{-a} \, \dd \tau \ge - \, \frac{C \eta^{1-a}}{1-a} \, r^{\alpha+2s}.
\] This provides a lower bound on $\tilde{v}$.

Let us assume, by contradiction, that the upper bound does not hold, that is, for any $M > 0$, there exists $(x_{0},y_{0})\in\Gamma_{r/8}$ such that $\tilde{v}(x_{0},y_{0})\geq Mr^{\alpha+2s}$. Our assumption, by integration, yields
\[
\tilde{v}(x_{0}, \eta r/2) \ge \tilde{v}(x_{0},y_{0}) - \frac{C \eta^{2s}}{(1-a) 2^{2s}} \, r^{\alpha + 2s} + \frac{C y_{0}^{2s}}{1-a} \, r^\alpha \ge \left( M - \frac{C \eta^{2s}}{(1-a)2^{2s}} \right) r^{\alpha + 2s}.
\] In particular, for sufficiently large $M > 0$, namely $M \ge \frac{4 C \eta^{2s}}{3 (1-a) 2^{2s}}$, we can write
\[
\tilde{v}(x_{0},\eta r/2)\geq \frac{M}{4} \, r^{\alpha+2s}.
\] Next, denote $\bar{v}$ as in \eqref{vbar} and observe that the semiconvexity of $\psi$ implies $|\bar{v} - \tilde{v}| \le C_0r^2$ in $\Gamma_r$. Then, the lower bound above gives
\[
\bar{v}(x,y) + \frac{C r^{\alpha+2s}}{1-a}+C_0r^2 \ge 0 \ \text{ for every } \ (x,y) \in \Gamma_r.
\] Now, $B_{\eta r/2} (x_{0}, \eta r/2) \subset \Gamma_r$ and $(0,\eta r/2) \in B_{\eta r/4} (x_{0}, \eta r/2)$, so that Harnack inequality, applied in $B_{\eta r/2} (x_{0}, \eta r/2)$, gives 
\[
\frac{M}{4} r^{\alpha+2s} \le \sup_{B_{\eta r/4}} \left[ \bar{v} + \frac{C r^{\alpha+2s}}{1-a} + C_0 r^2 \right] \le c \left(\bar{v}(0,\eta r/2) + \frac{C r^{\alpha+2s}}{1-a}+C_0r^2\right).
\] Hence, there exists $c_0>0$ such that
\[
\tilde{v}(0,\eta r/2) + C_{0} r^{2} \ge \bar{v}(0,\eta r/2) \ge c_0 M r^{\alpha+2s} - \frac{C r^{\alpha+2s}}{1-a} - C_0 r^2.
\] Recall $0\in \Lambda$; then, by \Cref{lem:b1b5}\eqref{b4},
\[
\begin{split}
0 = \tilde{v}(0,0) & \ge \tilde{v} (0, \eta r /2)  - \frac{nC_0 \eta ^{2}}{4(1+a)} r^2 \\ & \ge c_0 M r^{\alpha+2s} - \frac{C r^{\alpha+2s}}{1-a} - \frac{nC_0 \eta ^{2}}{4(1+a)} r^2 - 2 C_0 r^2.
\end{split}
\] In particular, we have a bound for $M$:
\[
M \le \frac{1}{c_0r^{\alpha+2s}} \left( \frac{C r^{\alpha+2s}}{1-a} + \frac{nC_0 \eta ^{2}}{4(1+a)} r^2 + 2C_0 r^2 \right).
\] This is in contradiction to our assumption because the constant $M > 0$ should be arbitrary. 
\end{proof}

We are now in a position to prove a first regularity estimate at a free boundary point.

\begin{theorem}
\label{holderthm} Let $u$ be a solution of \eqref{prin}, and $\psi$, $b$, $r$, and $\pazocal{I}$ as in \eqref{psi}, \eqref{defb}, \eqref{defr}, and \eqref{K}, respectively. Then, there exist $\bar{C}>0$ and $\alpha\in(0,\gamma)$ such that, for every $r \in (0,1)$ and every $x_{0} \in \partial\{u(t, \cdot)=\psi\}$,
\begin{equation}\label{alpha2s}
\sup_{B_r(x_{0})} |u(t, \cdot) - \psi| \le \bar{C} \, r^{\alpha+2s} \ \text{ and }
\end{equation}
\begin{equation}\label{alpha}
\sup_{B_r(x_{0})} \, \Big| \, \big[(-\Delta)^su(t, \cdot) - \pazocal{R}u(t, \cdot) \big] \, \chi_{\{u(t, \cdot) = \psi\}} \, \Big| \le \bar{C} \, r^\alpha.
\end{equation}
\end{theorem}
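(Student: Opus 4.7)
The plan is to combine Proposition \ref{prop:inf1} with Lemma \ref{lem:inf-to-L_infty} to obtain \eqref{alpha2s}, and then deduce \eqref{alpha} by passing to the trace $y \to 0^+$ of $y^a \tilde v_y$. By translation invariance of $\pazocal{L}$, I may assume without loss of generality that $x_0 = 0$, matching the standing setup of the section.

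First I would set $\alpha \coloneqq -\log_4 \mu$, with $\mu$ from Proposition \ref{prop:inf1}; since that proof allows any $\mu \in [\max\{4^{-\gamma}, 4^{-2+2s}\}, 1)$, taking $\mu$ strictly larger than $4^{-\gamma}$ ensures $\alpha \in (0,\gamma)$. For an arbitrary $r \in (0,1)$, I pick the dyadic index $k$ with $4^{-(k+1)} < r \le 4^{-k}$, so that $\Gamma_r \subset \Gamma_{4^{-k}}$ and $\mu^k = (4^{-k})^\alpha \le (4r)^\alpha$. The conclusion of Proposition \ref{prop:inf1} then upgrades to
\[
\inf_{\Gamma_r} y^a \tilde v_y \ge -C r^\alpha
\]
for a constant $C$ depending only on the structural parameters. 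Applying Lemma \ref{lem:inf-to-L_infty} gives $\sup_{\Gamma_{r/8}} |\tilde v| \le M r^{\alpha + 2s}$; since $\tilde v(x,0) = u(t,x) - \psi(x)$, evaluating at $y = 0$ and absorbing the factor $1/8$ into a larger constant yields \eqref{alpha2s}.

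For \eqref{alpha}, I would exploit the Caffarelli--Silvestre identity \eqref{fractional}: since $v(x,y) = u(t,x,y) + \tfrac{\pazocal{R}u(t,0)}{1-a} y^{1-a}$ and $\tilde v_y = v_y$, one has $\lim_{y \to 0^+} y^a \tilde v_y(x,y) = -(-\Delta)^s u(t,x) + \pazocal{R}u(t,0)$ for every $x$ where the fractional Laplacian is defined, which by Lemma \ref{comparison fractional and R} holds for a.e. $x$ on the contact set. Then, for a.e. $x \in B_r \cap \{u(t,\cdot) = \psi\}$,
\[
(-\Delta)^s u(t,x) - \pazocal{R}u(t,x) = -\lim_{y \to 0^+} y^a \tilde v_y(x,y) + \big(\pazocal{R}u(t,0) - \pazocal{R}u(t,x)\big).
\]
The first term on the right is at most $C r^\alpha$ by the dyadic bound above (after passing to the limit $y \to 0^+$ inside $\Gamma_r$), while the second is bounded by $C_1 r^\gamma \le C_1 r^\alpha$ thanks to \eqref{gamma} together with $\alpha < \gamma$ and $r < 1$. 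Combined with the nonnegativity $(-\Delta)^s u - \pazocal{R}u \ge 0$ from \eqref{equalset}, this gives \eqref{alpha}.

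The hard part is already hidden in Proposition \ref{prop:inf1}; what remains here is essentially bookkeeping, with the only subtle point being the compatibility of H\"older exponents, namely arranging $\alpha < \gamma$ so that the boundary trace bound and the H\"older bound on $\pazocal{R}u$ can be matched on the same scale.
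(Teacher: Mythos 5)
Your argument is correct and follows essentially the same route as the paper's proof: Proposition \ref{prop:inf1} plus the dyadic interpolation feeds into Lemma \ref{lem:inf-to-L_infty} for \eqref{alpha2s}, and \eqref{alpha} comes from the same decomposition $(-\Delta)^s u(t,\cdot)-\pazocal{R}u(t,\cdot)=-\lim_{y\to0^+}y^a v_y+\big(\pazocal{R}u(t,0)-\pazocal{R}u(t,\cdot)\big)$ controlled by the trace bound and the $C^\gamma$ regularity of $\pazocal{R}u$, with $\alpha<\gamma$ arranged exactly as in the paper. Your dyadic bookkeeping is in fact slightly cleaner than the paper's.
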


\begin{proof}  
	The estimate in \eqref{alpha2s} is a direct consequence of \Cref{lem:inf-to-L_infty}. In order to prove \eqref{alpha}, we assume, as before, $x_{0} = 0$.  Recall that, by the definition of $\tilde{v}$,
\[
(-\Delta)^s u(t,x') - \pazocal{R}u(t,0) = - \lim_{y\rightarrow 0^+} y^a \tilde{v}_y (x',y) = - \lim_{y\rightarrow 0^+} y^a v_y (x',y)
\] and so
\[
(-\Delta)^s u(t,x') - \pazocal{R}u(t,x') = - \lim_{y\rightarrow 0^+} y^a v_y(x',y) + \pazocal{R}u(t,0) - \pazocal{R} u(t,x').
\]  By \eqref{equalset} and \eqref{gamma} we have that
\[
\sup_{B_r} \, \Big| \, \big[(-\Delta)^su(t, \cdot) - \pazocal{R}u(t, \cdot) \big] \, \chi_{\{u(t, \cdot) = \psi\}}\Big|\leq -\inf_{B_r}\lim_{y\rightarrow 0^+} y^a v_y(\cdot,y) + C_{1}r^\gamma.
\]
Now, if $1/4<r<1$, then by \Cref{prop:inf1} for $k=0$ we have that
\[
-\inf_{B_r}\lim_{y\rightarrow 0^+} y^a v_y(x',y)\leq c \leq 4c  r;
\]
on the other hand, if $r\leq 1/4$, by taking $\beta$ such that $\beta\leq \log_4\mu^{-1}$ combined with \Cref{prop:inf1}, we obtain
\[
-\inf_{B_r}\lim_{y\rightarrow 0^+} y^a v_y(x',y)\leq -\inf_{B_{1/4}}\lim_{y\rightarrow 0^+} y^a v_y(x',y)\leq c \mu \leq c \mu^k\leq c  4^{-k\beta}.
\]
Hence, choosing $k$ large enough so that $4^{-k}<r$ gives \eqref{alpha} for $\alpha=\min\{\beta,\gamma\}$.
\end{proof}

\begin{corollary}\label{cor:holderthm}
In the same setting of \Cref{holderthm}, there exist $\bar{C}'>0$ and $\alpha \in (0,\gamma)$ such that
\[ 
\Big\| \, \big[(-\Delta)^su(t, \cdot) - \pazocal{R}u(t, \cdot) \big] \, \chi_{\{u(t, \cdot) = \psi\}} \, \Big\|_{C^\alpha(\mathbb{R}^n)}\leq \bar{C}',
\]
that is,
\[
\big[(-\Delta)^su(t, \cdot) - \pazocal{R}u(t, \cdot) \big] \, \chi_{\{u(t, \cdot) = \psi\}} \in C^\alpha(\mathbb{R}^n)\cap L^\infty(\mathbb{R}^n).
\]
\end{corollary}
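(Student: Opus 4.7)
The $L^\infty$ bound is immediate: both $(-\Delta)^s u$ and $\pazocal{R}u$ are globally bounded by the results of Section 2, so the content of the corollary lies in the uniform H\"older estimate. Set $K \coloneqq \{u(t, \cdot) = \psi\}$ (closed by continuity of $u - \psi$) and $F \coloneqq [(-\Delta)^s u(t,\cdot) - \pazocal{R}u(t,\cdot)]\chi_K$. Letting $r \to 0$ in \Cref{holderthm} gives $F \equiv 0$ on $\partial K$. My plan is to fix $x, y \in \mathbb{R}^n$ with $d \coloneqq |x - y| \in (0,1)$ (for $d \geq 1$ the boundedness of $F$ suffices) and proceed by a case analysis based on the relative position of $x, y$ with respect to $K$ and $\partial K$.

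The easy cases only require \Cref{holderthm}. If both $x, y \in K^c$, then $F(x) = F(y) = 0$. If exactly one of them, say $x$, lies in $K$ and $y \in K^c$, the segment $[x,y]$ crosses $\partial K$ at some $z$, so $x \in B_d(z)$ and \Cref{holderthm} gives $|F(x)| \leq \bar{C}d^\alpha$ while $F(y) = 0$. If both lie in $K$ and at least one of them, say $x$, satisfies $\operatorname{dist}(x, \partial K) \leq 4d$, then there is $z \in \partial K$ with $|x - z|, |y - z| \leq 5d$, so both points are in $B_{5d}(z)$ and \Cref{holderthm} yields $|F(x) - F(y)| \leq 2\bar{C}(5d)^\alpha$.

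The real work is the deep interior case: $x, y \in K$ with $r \coloneqq \min\{\operatorname{dist}(x, \partial K), \operatorname{dist}(y, \partial K)\} > 4d$. Then $u \equiv \psi$ in $B_r(x)$ and in $B_r(y)$, which yields the decomposition
\[
F(x) = (-\Delta)^s \psi(x) - \pazocal{R}u(t,x) - I(x), \quad I(x) \coloneqq c_{n,s}\int_{K^c} \frac{(u - \psi)(t,z)}{|x-z|^{n+2s}}\, dz.
\]
From $\psi \in W^{2,\infty}$ and $s > 1/2$, one gets $(-\Delta)^s \psi \in C^{2-2s}$; \eqref{gamma} gives $\pazocal{R}u \in C^{\gamma}$. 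These two terms contribute the desired H\"older control. For $I$, differentiating under the integral sign and using $\operatorname{dist}(x, K^c) \geq r$ together with the uniform bound on $u - \psi$ from \Cref{uniformbound} yields $|\nabla I| \lesssim r^{-(2s+1)}$, hence
\[
|I(x) - I(y)| \lesssim \frac{d}{r^{2s+1}}.
\]

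The main obstacle is that this last estimate is not a priori controlled by $d^\alpha$ uniformly in $r$. I would close the argument by an additional dichotomy on $r$, parametrized by a small exponent $\alpha' > 0$ to be fixed. When $r \geq d^{(1-\alpha')/(2s+1)}$, the displayed bound gives $|I(x) - I(y)| \leq d^{\alpha'}$ directly. When $r < d^{(1-\alpha')/(2s+1)}$, I discard the decomposition and use instead the pointwise decay $|F(x)|, |F(y)| \leq \bar{C}(2r)^\alpha$ obtained by applying \Cref{holderthm} on $B_{2r}(z_0)$ for $z_0 \in \partial K$ realizing the distance $r$ to $x$; this yields $|F(x) - F(y)| \lesssim r^\alpha \leq d^{\alpha(1-\alpha')/(2s+1)}$. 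Choosing $\alpha' \leq \min\{\alpha, \gamma, 2-2s, \alpha/(\alpha + 2s + 1)\}$ forces both branches to produce a bound of order $d^{\alpha'}$, giving a uniform $C^{\alpha'}$-bound with $\alpha' \in (0, \gamma)$. The price of the interpolation is precisely this reduction of the H\"older exponent from $\alpha$ to $\alpha'$, which is nonetheless consistent with the statement.
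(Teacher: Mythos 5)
Your proof is correct, and the case analysis outside the deep interior (both points outside $\Lambda$, one in and one out, or both within a multiple of $|x-y|$ of $\partial\Lambda$) matches the paper's second case, which relies only on \eqref{alpha}. The genuine difference is in the deep-interior case. The paper keeps the full strength of \Cref{holderthm}: writing $f=u(t,\cdot)-\psi$, it expresses the difference of the fractional Laplacians as a single integral of $f$ against the difference of kernels over the complement of $B_{4|x_1-x_2|}(x_1)\cap B_{4|x_1-x_2|}(x_2)$, and controls that integral using the quantitative decay $\sup_{B_\rho(\bar x)}|f|\le\bar C\rho^{\alpha+2s}$ from \eqref{alpha2s}; the resulting bound $C\big[\bar C\int_{|x_1-x_2|}^{1}\tau^{\alpha-2}\,\dd\tau+M\big]|x_1-x_2|$ preserves the exponent $\alpha$ of \Cref{holderthm}. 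You instead use only the crude bound $\|u-\psi\|_{L^\infty}\le C$, obtain the gradient estimate $|\nabla I|\lesssim r^{-(2s+1)}$, and then compensate by interpolating against the pointwise decay $|F|\lesssim r^\alpha$ near the free boundary via a dichotomy in $r=\operatorname{dist}(\cdot,\partial\Lambda)$. This is simpler (no weighted tail integral of the kernel difference) but costs you the exponent: you end with $\alpha'\le\alpha/(\alpha+2s+1)$ rather than $\alpha$. Since the corollary only asserts the existence of some exponent in $(0,\gamma)$, and the bootstrap of \Cref{prop:optimal-decay} works starting from any positive exponent, this loss is harmless here; it would matter only if one wanted to track the precise starting exponent of the iteration. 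The remaining gaps are cosmetic (radii such as $2r$ or $5d$ possibly exceeding $1$, handled by the $L^\infty$ bound exactly as you handle $d\ge1$).
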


\begin{proof} Let $\alpha$ obtained in \Cref{holderthm}. Over the set $\Lambda = \{u(t, \cdot) = \psi\}$, recall that the function $(-\Delta)^s u(t, \cdot) - \pazocal{R}u(t, \cdot)$ is bounded, by \eqref{equalset}. It is then enough to show that, for $|x_1-x_2| \le 1/4$ with $x_1,x_2 \in \Lambda$, 
\[
\Big|(-\Delta)^su(t,x_1)-(-\Delta)^su(t,x_2)-\pazocal{R}u(t,x_1)+\pazocal{R}u(t,x_2) \Big| \le C |x_{1} - x_{2}|^{\alpha}.
\] Given $x \in \Lambda$, let $d(x, \partial \Lambda)$ denote the distance from $x$ to $\partial \Lambda$. We then analyze two possible situations.
\begin{itemize}
\item Suppose first that 
\[
|x_1-x_2| \le \frac{1}{4} \max \big\{ d(x_{1}, \partial \Lambda), d(x_{2}, \partial \Lambda) \big\}.
\] 
By \Cref{holderthm}, we have, for any $r \in (0,1)$,
\[
\sup_{B_r(x_i)}|u(t, \cdot)-\psi| \le \bar{C} r^{\alpha+2s}.
\] In particular, $u(t, \cdot) = \psi$ in the set $S\coloneqq B_{4|x_1-x_2|}(x_1)\cap B_{4|x_1-x_2|}(x_2)$. Also, we trivially have 
\[
|u(t, \cdot) - \psi| \le M  \coloneqq \| u(t, \cdot) - \psi \|_{L^\infty(\mathbb{R}^n)} 
\] outside the set $B_1(x_1) \supset B_{1/2}(x_2)$, and then
\[\begin{split}
|(-\Delta)^s f(x_1)-(-\Delta)^sf(x_2)-\pazocal{R}f(x_1)
+\pazocal{R}f(x_2)|\leq C_1|x_1-x_2|^\gamma\\+\int_{\mathbb{R}^n\setminus S}|f(x')|\left|\frac{1}{|x'-x_1|^{n+2s}}-\frac{1}{|x'-x_2|^{n+2s}}\right| \, \dd x'\\
\leq C_1|x_1-x_2|^\gamma+C\left[\bar{C}\int_{|x_1-x_2|}^1 \tau^{\alpha'-2} \, \dd \tau +M\right]|x_1-x_2|\leq C|x_1-x_2|^\alpha,
\end{split}\]
where $f\coloneqq u(t,\cdot)-\psi$. Because $\|(-\Delta)^s\psi\|_{C_x^{1-s}(\mathbb{R}^n)}$ is bounded, this gives the result.

\item If, on the other hand,
\[
|x_1-x_2| \ge \frac{1}{4} \max \big\{ d(x_{1}, \partial \Lambda), d(x_{2}, \partial \Lambda) \big\},
\] we take $\bar{x}_1, \bar{x}_2 \in \partial\Lambda$  for which $|x_1-\bar{x_1}| = d(x_{1}, \partial \Lambda)$ and $|x_2-\bar{x_2}| = d(x_{2}, \partial \Lambda)$. Therefore, by \Cref{holderthm}, we have
\[\begin{split}
|(-\Delta)^s f(x_1)-(-\Delta)^sf(x_2)-\pazocal{R}f(x_1)
+\pazocal{R}f(x_2)|\leq C_1|x_1-x_2|^\gamma\\
+\sup_{B_{4|x_1-x_2|}(\bar{x}_1)}|(-\Delta)^s f|+\sup_{B_{4|x_1-x_2|}(\bar{x}_2)}|(-\Delta)^s f|\leq \bar{C}'|x_1-x_2|^\alpha.
\end{split}\]
\end{itemize}
\end{proof} 

\section{Monotonicity formula and optimal regularity in space}

We recall a regularity property provided by the fractional heat operator (see, for instance, \cite[Appendix A]{CaFi-08}); namely, if $v$ satisfies 
\[
\partial_t v + (-\Delta)^s v = f
\] with $f\in L^\infty((0,T];C^{\beta}(\mathbb{R}^n))$ for some $\beta\in(0,1)$, then
\begin{equation}\label{regularityheateq}
\|\partial_t v\|_{L^\infty((0,T];C^{\beta-0^+}(\mathbb{R}^n))}+\|(-\Delta)^sv\|_{L^\infty((0,T];C^{\beta-0^+}(\mathbb{R}^n))} \le C \left(1+ \|f\|_{L^\infty((0,T];C^{\beta}(\mathbb{R}^n))}\right).
\end{equation} Incidentally, we have shown in \Cref{cor:holderthm} and \eqref{gamma} that
\[
\partial_t u + (-\Delta)^s u = \big[(-\Delta)^s u - \pazocal{R}u \big] \chi_{\{u=\psi\}} + \pazocal{R} u \in L^\infty\big((0,T];C^\alpha(\mathbb{R}^n)\big)
\] so that \eqref{regularityheateq} holds for our solution $u$. Hence, $(-\Delta)^su\in C^{\alpha-0^+}$, and since $u$ is bounded \Cref{uniformbound}, by \cite[Proposition 2.1.8]{silvestre}, we have
\[
u\in L^\infty \big( (0,T]; C^{2s+\alpha-0^+}(\mathbb{R}^n) \big).
\] 
Moreover, the lower order term has the regularity
\begin{equation}\label{initialregr}
\pazocal{R}u\in L^\infty \big( (0,T]; C^{\alpha+\gamma}(\mathbb{R}^n) \big).
\end{equation}

Next, we consider $0 \in \partial\{u(t, \cdot) = \psi\}$. Moreover, let $w:\mathbb{R}^n\times \mathbb{R}^+ \rightarrow \mathbb{R}$ be the function which solves, with fixed $t > 0$, 
\begin{equation}\label{eqn:w}
\begin{cases}
L_{-a} w = 0 & \text{ in } \mathbb{R}^{n}\times \mathbb{R}^+,\\
w(x,0) = \big[(-\Delta)^su (t,x) - \pazocal{R} u (t,0)\big] \chi_{\{u(t, \cdot) = \psi\}} (x) & \text{ for } x \in \mathbb{R}^{n}.
\end{cases}
\end{equation}
By the boundedness obtained in \Cref{comparison fractional and R}, the maximum principle for $w$, and the regularity $C^\alpha(\mathbb{R}^n)$ of $w(x,0)$ (given by \Cref{cor:holderthm}), we have 
\[
\sup_{|x|^2+y^2\leq r^2}w(x,y)\leq Cr^\alpha,
\]
for a uniform constant $C>0$. Our goal is to obtain the estimative above with $1-s$ replacing $\alpha$. In particular, without loss of generality, we assume that $\alpha<1-s$.

\

We begin with the following lemma, which is the analogue of \cite[Lemma 4.5]{CaFi-08}.

\begin{lemma}\label{lem:zero-not-hull}
Let $\bar{C}>0$ and $\alpha$ be as in \Cref{holderthm} and set 
\[
\delta = \delta (\alpha, s) = \frac{1}{4}\left(\frac{\alpha}{\alpha+2s}-\frac{\alpha}{2}\right).
\] Then, there exists $r_0 > 0$, depending on $\alpha,s,\bar{C},$ and $C_0$, such that $\operatorname{co} (\Omega \cap B_{r})$ does not contain the origin for any $r \in (0, r_0)$, where
\[
\Omega \coloneqq \{ x \in \mathbb{R}^{n} ; ~ w(x,0)\geq r^{\alpha+\delta }\}
\] and $\operatorname{co} A$ stands for the convex hull of the set $A$.
\end{lemma}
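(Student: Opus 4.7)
The plan is a contradiction argument in the spirit of \cite[Lemma 4.5]{CaFi-08}, built on Carath\'eodory's theorem, the $C_0$-semiconvexity from \Cref{lem:b1b5}\eqref{b2}, the pointwise upper bound in \Cref{lem:b1b5}\eqref{b4}, and an $L_a$-harmonic barrier comparison analogous to the one used in the proof of \Cref{prop:inf1}.

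Suppose, for contradiction, that $0 \in \operatorname{co}(\Omega \cap B_r)$ for some $r < r_0$, where $r_0$ will be chosen at the end. By Carath\'eodory's theorem one writes $0 = \sum_{i=1}^{n+1} \lambda_i x_i$ with $\lambda_i \ge 0$, $\sum_i \lambda_i = 1$, and $x_i \in \Omega \cap B_r$. Since $\Omega \subset \Lambda$, we have $\tilde v(x_i, 0) = 0$, and the definition of $\Omega$ combined with the a.e.-identity $-\lim_{y \to 0^+} y^a \tilde v_y(x,y) = w(x,0)$ on $\Lambda$ from \Cref{lem:b1b5}\eqref{b3} yields the Neumann lower bound
\[
-\lim_{y \to 0^+} y^a\, \tilde v_y(x_i, y) \;\ge\; r^{\alpha + \delta}.
\]

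The first main step converts the convex combination into an upper bound for $\tilde v(0,y)$. The $C_0$-semiconvexity of $v(\cdot, y)$ for every fixed $y \ge 0$ (inherited from $u(t, \cdot)$ via the maximum principle applied to the $L_a$-harmonic extension) together with a second-order Taylor expansion of $\psi$ at the origin produces
\[
\tilde v(0, y) \;\le\; \sum_i \lambda_i\, \tilde v(x_i, y) + C\, r^2 \qquad \text{for every } y \ge 0,
\]
with $C$ depending on $C_0$ and $\|D^2 \psi\|_\infty$. Applying \Cref{lem:b1b5}\eqref{b4} at each $x_i \in \Lambda$ (using $\tilde v(x_i, 0) = 0$ and $|x_i| \le r$) then gives, uniformly in $y \in [0, \eta r]$,
\[
\tilde v(0, y) \;\le\; \frac{n C_0}{1 + a}\, y^2 \;+\; \frac{C\, r^\gamma}{1 - a}\, y^{1 - a} \;+\; C\, r^2.
\]

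The second and main step is to derive a competing \emph{lower} bound $\tilde v(0, y) \ge c\, r^{\alpha + \delta}\, y^{1 - a}$ from the pointwise Neumann information at the $x_i$'s. This is achieved by working with $\bar v(x, y) := v(x, y) - \psi(0) - \nabla \psi(0) \cdot x$, which is $L_a$-harmonic, and comparing it inside $\Gamma_r$ against a Hopf-type competitor of the form
\[
\bar v(x, y) - A\, r^{\alpha + \delta}\, \tfrac{y^{1 - a}}{1 - a} + B\,\bigl(|x|^2 - \tfrac{n}{1 + a}\, y^2\bigr),
\]
with suitable positive constants $A, B$ depending on $C_0$ and $\|\pazocal R u(t, \cdot)\|_{C^\gamma(B_1)}$, exactly in the spirit of the barrier $\bar W$ from the proof of \Cref{prop:inf1}. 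The hypothesis $0 \in \operatorname{co}(\Omega \cap B_r)$ is crucial here: it prevents the Neumann data from concentrating in a thin cone away from the origin, and a Hopf-type argument propagates the pointwise information at the $x_i$'s to a uniform interior estimate at $(0, y)$.

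Comparing the two bounds at $y = \eta r$ produces $c\, r^{\alpha + \delta + 2s} \le C\bigl(r^2 + r^{\gamma + 2s}\bigr)$. The prescription
\[
\delta \;=\; \tfrac{1}{4}\Bigl(\tfrac{\alpha}{\alpha + 2s} - \tfrac{\alpha}{2}\Bigr) \;=\; \tfrac{\alpha\,(2 - \alpha - 2s)}{8\,(\alpha + 2s)} \;>\; 0
\]
is engineered so that $\alpha + \delta + 2s < \min\{2,\, \gamma + 2s\}$ strictly, which forces this inequality to fail for every $r$ smaller than a threshold $r_0 = r_0(\alpha, s, \bar C, C_0)$, and the contradiction follows. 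The principal obstacle is the barrier step, since translating pointwise Neumann data at the finitely many $x_i$ into an $L^\infty$ lower bound at $(0, y)$ requires a delicate $L_a$-harmonic competitor encoding both the convex-hull geometry and the lower-order contribution coming from $\pazocal R u$, which enters only through the boundary value $w(x, 0) = \bigl[(-\Delta)^s u(t, x) - \pazocal R u(t, 0)\bigr] \chi_\Lambda(x)$ and is controlled by $\|\pazocal R u(t, \cdot)\|_{C^\gamma(B_1)}$.
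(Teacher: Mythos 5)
Your proposal has the right skeleton (contradiction, semiconvexity to transfer information from the hull to the origin, a choice of $\delta$ that makes exponents clash), but the two ``competing bounds'' you set up are both problematic, and the second one is where the argument genuinely breaks. The membership $x_i\in\Omega$ encodes the \emph{Neumann} information $\lim_{y\to0^+}y^a\tilde v_y(x_i,y)=-w(x_i,0)\le -r^{\alpha+\delta}$, which, integrated against the bound $\partial_y(y^a\tilde v_y)\le 2nC_0y^a$ of \Cref{lem:b1b5}\eqref{b2}, forces $\tilde v$ to \emph{decrease} above each $x_i$: $\tilde v(x_i,y)\le -\tfrac{r^{\alpha+\delta}}{2s}y^{2s}+\tfrac{nC_0}{1+a}y^2$. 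In your first step you instead invoke \Cref{lem:b1b5}\eqref{b4}, which only gives the generic (positive) bound $Cr^\gamma y^{1-a}$ and discards the defining property of $\Omega$ entirely. You then try to recover it in the second step by extracting a \emph{positive} lower bound $\tilde v(0,y)\ge c\,r^{\alpha+\delta}y^{1-a}$ from the same Neumann data via a Hopf-type barrier. No maximum-principle mechanism can produce this: strongly negative Neumann data at the $x_i$ pushes $\tilde v$ \emph{down} above those points, and there is nothing (note $\tilde v(0,0)=0$ and $\tilde v\ge0$ only on $\{y=0\}$) forcing $\tilde v(0,\cdot)$ to grow like $y^{2s}$ with a constant proportional to $r^{\alpha+\delta}$. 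This step is not a ``principal obstacle'' to be overcome with a delicate competitor; it is false as stated.

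The correct mechanism is simpler and needs no barrier. Keep your semiconvexity step, $\tilde v(0,y)\le\sup_{x\in\operatorname{co}(\Omega\cap B_r)}\tilde v(x,y)+C_0r^2$, but feed into it the negative upper bound $\tilde v(x_i,y)\le -\tfrac{r^{\alpha+\delta}}{2s}y^{2s}+\tfrac{nC_0}{1+a}y^2$ coming from $\Omega$-membership. The competing bound at the origin is then the \emph{lower} bound $\tilde v(0,y)\ge-\bar C y^{\alpha+2s}$ supplied by \Cref{holderthm} (this is where $\bar C$ enters the lemma; it appears nowhere in your argument, which is a sign the key ingredient is missing). The resulting inequality $\bar Cy^{\alpha+2s}+\tfrac{nC_0}{1+a}y^2+C_0r^2\ge\tfrac{r^{\alpha+\delta}y^{2s}}{2s}$ is then contradicted for small $r$ by coupling $y^\alpha=r^{\alpha+2\delta}$, which is the relation the specific formula for $\delta$ is engineered for; your exponent condition $\alpha+\delta+2s<\min\{2,\gamma+2s\}$ is not the one that $\delta$ guarantees (in particular $\alpha+\delta<\gamma$ need not hold when $\alpha$ is close to $\gamma$).
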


\begin{proof} 
	Let $x \in \Omega$ and assume, by contradiction, that $0 \in \operatorname{co} (\Omega \cap B_r)$. By the definition of $w$, we must have $u(t,x) = \psi (x)$, or equivalently, $\tilde{v} (x,0) = 0$. Note that, for $x \in \Omega$,
	\[
	\lim_{y\rightarrow 0^+}y^a\tilde{v}_{y}(x,y)=-(-\Delta)^su(t,x)+\pazocal{R}u(t,0)=-w(x,0)\leq -r^{\alpha+\delta }.
	\] Moreover, by \Cref{lem:b1b5}\eqref{b2},
\[
\begin{split}
\tilde{v}(x,y) & = \int_0^y \tilde{v}_{y} (x,\tau) \, \dd \tau = \int_0^y \frac{1}{\tau^{a}} \left( \lim_{\rho \to 0^{+}} \rho^{a} \tilde{v}_{y} (x,\rho) + \int_{0}^{\tau} \rho^{a} \tilde{v}_{y} (x,\rho) \, \dd \rho \right) \, \dd \tau \\
    & \le - \frac{r^{\alpha+\delta }y^{2s}}{2s} + \int_0^y \frac{2 n C_0}{\tau^a} \int_0^\tau \rho^a \, \dd \rho  \, \dd \tau \\
    & = - \frac{r^{\alpha+\delta }y^{2s}}{2s} + \frac{2 n C_0}{1+a} \int_0^y \tau \, \dd \tau \\
    & = - \frac{r^{\alpha+\delta }y^{2s}}{2s} + \frac{n C_0 y^{2}}{1+a}.
\end{split} 
\] Now, by \Cref{holderthm}, we know $\tilde{v}(0,y) \ge - \bar{C} y^{\alpha+2s}$. Also, by the semiconvexity of $\tilde{v}$, given by \Cref{lem:b1b5}\eqref{b2}, we have 
\[
\tilde{v}(0,y) + \nabla_{x}\tilde{v}(0,y)\cdot x \le \tilde{v}(x,y) + C_{0} r^{2}.
\] Thus, since $0 \in \operatorname{co} (\Omega \cap B_r)$,
\[
\sup_{x\in  \operatorname{co} (\Omega \cap B_r)}\nabla_{x}\tilde{v}(0,y) \cdot x\geq -|\nabla_{x}\tilde{v}(0,y)| \inf_{\operatorname{co} (\Omega \cap B_r)}|x|=0
\] and we have
\[
\tilde{v}(0,y) \le \sup_{x\in\operatorname{co} (\Omega \cap B_r)} \tilde{v}(x,y)+C_0r^2.
\] Putting all these together, we have, for any $r,y \in (0,1)$,
\begin{equation}\label{contrad}
\bar{C} y^{\alpha+2s} + \frac{n C_0 }{1+a} y^{2} + C_0 r^2 \geq \frac{r^{\alpha+\delta }y^{2s}}{2s}.
\end{equation} In order to get a contradiction, we relate $y$ and $r$ by the formula $y^{\alpha} = r^{\alpha + 2\delta }$, so that \eqref{contrad} implies
\[
\bar{C}r^{\delta}+\frac{nC_0}{1+a}r^{4\delta \alpha^{-1} + \delta +\gamma} + C_0 r^{\delta +\gamma} \ge \frac{1}{2s},
\] where $\gamma \coloneqq 2 -  \alpha^{-1} (\alpha+2s)(\alpha + 2\delta)$ which is positive by the definition of $\delta$. Now, the left hand side goes to zero as $r \to 0$ and we have a contradiction for small values of $r$.
\end{proof} 

We remark that $\delta<\gamma$, since $2s>1$ and $\alpha<\gamma$. The next two technical lemmas are key ingredients to prove the monotonicity formula of \Cref{lem:monotonicity}.

\begin{lemma}\label{lem:antes-monoton}
There exists $C > 0$ such that, for every $r \ge 0$,
	\[
	\limsup_{y \rightarrow 0^{+}} \int_{B_r} \frac{y^{-a} \, \partial_y \big(w(x,y)^2\big)}{(|x|^2 + y^2)^{(n-1-a)/2}} \, \dd x \ge -C r^{\alpha+1+a}.
	\] Moreover, 
	\[
	\lim_{y\rightarrow 0^{+}} \int_{B_r} \partial_y \left( \big(|x|^2 + y^2\big)^{-(n-1-a)/2} \right) y^{-a} w (x,y)^2 \, \dd x = 0.
	\]
\end{lemma}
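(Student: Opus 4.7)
The plan is to handle the two assertions separately, since the second one is essentially a direct computation while the first requires integrating in $y$ to transfer the derivative off $w^2$.

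For the second equation, I will first compute explicitly that
\[
\partial_y\Big[(|x|^2+y^2)^{-(n-1-a)/2}\Big] = -(n-1-a)\,y\,(|x|^2+y^2)^{-(n+1-a)/2},
\]
so the integrand is $-(n-1-a)\,y^{1-a}(|x|^2+y^2)^{-(n+1-a)/2}w(x,y)^2$. The next step is to establish the boundary estimate $|w(x,y)|^2 \le C(|x|^2+y^2)^{\alpha}$ in a neighborhood of the origin. This follows from three facts: (a) $w(\cdot,0)\in C^{\alpha}(\mathbb{R}^n)$ by \Cref{cor:holderthm}; (b) $w(0,0)=0$, because $0\in\partial\Lambda$ lies in the closure of $\Lambda^c$, where $w(\cdot,0)\equiv 0$, and $w(\cdot,0)$ is continuous; (c) the Poisson kernel for the $L_{-a}$-extension maps $C^{\alpha}$ data into a function that is $C^{\alpha}$ up to the boundary. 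Substituting this estimate and rescaling $x=y\xi$ turns the integral into $C y^{2\alpha}\int_{|\xi|<r/y}(1+|\xi|^2)^{\alpha-(n+1-a)/2}\,d\xi$; the $\xi$-integral is bounded uniformly in $y$ because the tail decays like $|\xi|^{2\alpha+a-2}$ and integrability at infinity reduces to $\alpha<s$, which follows from $\alpha<1-s<s$ (using $s>1/2$). Hence the integral is $O(y^{2\alpha})$ and vanishes as $y\to 0^+$.

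For the first claim, the identity I will use is
\[
y^{-a}G\,\partial_y(w^2) = \partial_y\bigl(y^{-a}Gw^2\bigr) + |a|\,y^{-a-1}Gw^2 - y^{-a}(\partial_y G)\,w^2,
\]
valid because $a<0$, where $G(x,y)=(|x|^2+y^2)^{-(n-1-a)/2}$. Integrating from $0$ to $\delta$ in $y$ and using that $y^{-a}G(x,y)w^2(x,y)\to 0$ as $y\to 0^+$ (the factor $y^{-a}=y^{2s-1}$ already vanishes since $s>1/2$), I get
\[
\int_0^{\delta}\!\! I(y)\,dy = \int_{B_r}\!\!\delta^{-a}G(x,\delta)w^2(x,\delta)\,dx - |a|\!\int_{B_r}\!\!\int_0^{\delta}\!\!y^{-a-1}Gw^2\,dy\,dx - \int_{B_r}\!\!\int_0^{\delta}\!\!y^{-a}(\partial_y G)w^2\,dy\,dx.
\]
The first and third terms on the right are nonnegative (the latter because $\partial_y G<0$ and carries a minus sign). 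Combined with a mean value argument on $[0,\delta]$, it therefore suffices to show that
\[
|a|\int_{B_r}\int_0^{\delta}y^{-a-1}G(x,y)w^2(x,y)\,dy\,dx \le C\,\delta\,r^{\alpha+1+a}
\]
for a range of $\delta$ that reaches arbitrarily small values, which would yield a sequence $y_k\to 0^+$ along which $I(y_k)\ge -Cr^{\alpha+1+a}$ and hence the desired $\limsup$ bound.

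The main obstacle is precisely the estimate above: the weight $y^{-a-1}=y^{2s-2}$ is not integrable at $y=0$ (since $s<1$), so the naive pointwise estimate $w^2\le C(|x|^2+y^2)^{\alpha}$ via the change of variable $y=|x|t$ produces a bound of order $\delta^{2\alpha+1}+\delta^{2s-1}r^{2\alpha+2-2s}$, whose second summand is worse than the target $\delta\,r^{\alpha+1+a}$ for small $\delta$. To recover the sharp scaling I plan to use refined information about $w(\cdot,0)$: namely, \Cref{lem:zero-not-hull} ensures that the set where $w(\cdot,0)$ is close to zero constrains $w$ geometrically near the origin, and the explicit boundary trace $w(x,0)=[(-\Delta)^s u(t,x)-\pazocal{R}u(t,0)]\chi_{\Lambda}(x)$ together with the sign estimates \eqref{equalset}–\eqref{diffset} let one dominate $w(x,y)^2$ by a quantity that pairs correctly against the singular weight after an integration-by-parts in $y$ reduces $y^{-a-1}$ to $y^{-a}$. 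Carrying out this sharper bound (likely with a dyadic decomposition of the cylinder $B_r\times (0,\delta)$ into the regions $\{|x|\lesssim y\}$ and $\{|x|\gtrsim y\}$ and exploiting the $L_{-a}$-harmonicity of $w$ on each) is the technical heart of the proof.
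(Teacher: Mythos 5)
Your treatment of the second assertion is correct and coincides with the paper's: compute $\partial_y$ of the kernel, use $w(x,y)^2\le C(|x|^2+y^2)^{\alpha}$ (from $w(\cdot,0)\in C^{\alpha}$, $w(0,0)=0$, and boundary regularity of the extension), and conclude the integral is $O(y^{2\alpha})$. The first and main assertion, however, is not proven, and the route you outline cannot be completed as described. The obstruction you yourself identify is structural, not technical: in your integrated identity the discarded boundary term $\delta^{-1}\int_{B_r}\delta^{-a}G(x,\delta)w(x,\delta)^2\,\dd x$ and the term you try to absorb, $\delta^{-1}|a|\int_{B_r}\int_0^{\delta}y^{-a-1}Gw^2$, are \emph{both} of order $\delta^{2s-2}r^{2\alpha+1+a}$, which diverges as $\delta\to0^+$ since $2s-2<0$ (incidentally, $y^{-a-1}=y^{2s-2}$ \emph{is} integrable at $y=0$ precisely because $s>1/2$, contrary to your claim). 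These two divergent contributions must cancel against each other; once the positive one is thrown away as "merely nonnegative," no dyadic decomposition or refinement of the bound on the negative one can produce an estimate uniform in $\delta$. There is also a sign slip: since $a<0$, the middle term of your pointwise identity should read $-|a|\,y^{-a-1}Gw^2$ (your integrated formula is consistent with the corrected identity).

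The missing ingredient, which is the actual content of the paper's proof, is a one-sided monotonicity estimate for $w$ in $y$. Combining $w\ge0$ (maximum principle from $w(\cdot,0)\ge0$), the comparison $w\ge -y^{a}v_y$ (maximum principle, using \Cref{comparison fractional and R}), and $\partial_y(y^{a}v_y)\le nC_0y^{a}$ from \Cref{lem:b1b5}, one obtains
\[
w(x,y)\;\ge\;w(x,0)-\tfrac{nC_0}{1+a}\,y^{1+a}\qquad\text{for all }x\in\mathbb{R}^n,\ y>0,
\]
hence $w(x,y)^2-w(x,0)^2\ge -Ky^{1+a}(r+y)^{\alpha}$ on $B_r$. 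After the change of variables $\tau\sim y^{1+a}$, which turns $y^{-a}\partial_y$ into $\partial_\tau$, one averages in $\tau$ over $[0,\epsilon]$, applies Fubini, uses that $\partial_\tau$ of the kernel is nonpositive, and the displayed lower bound on the increment of $w^2$ yields exactly the admissible error $-Cr^{\alpha+1+a}$. This direct comparison of $w(x,y)^2$ with $w(x,0)^2$ is what replaces the cancellation your decomposition would otherwise have to track, and without it the first claim remains unproved.
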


\begin{proof}
To show the first estimate, we begin by noticing the following properties:
\begin{enumerate}[$(i)$]
\item From \Cref{comparison fractional and R}, we have $w(x,0)=0$ for $x\in\mathbb{R}^n\setminus \Lambda$ and $w(x,0)\geq0$ for $x\in\Lambda$. Hence, by the maximum principle $w(x,y)\geq 0$, that is, $w(x,y) \geq w(x,0)$ for all $x\in\mathbb{R}^n\setminus \Lambda$ and $y>0$.
\item From \Cref{lem:b1b5}, we have 
\[
y^av_y(x,y)\leq \lim_{\tau \rightarrow 0^+} \tau^a v_y(x,\tau) + \frac{nC_0}{1+a} y^{1+a},
\]
with the limit well-defined since $-(-\Delta)^su(t,x)+\pazocal{R}u(t,0)$ is Hölder continuous on $\Lambda$ and smooth outside (by \Cref{comparison fractional and R}).
\item The function $y^av_y$ is a  solution of
\[\begin{cases}
L_{-a}(y^av_y)=0;\\
\lim\limits_{y\rightarrow 0^+}y^av_y(x,y)=-(-\Delta)^su(t,x)+\pazocal{R}u(t,0).
\end{cases}\]
Moreover, from \Cref{comparison fractional and R}, we have that $w(x,0)\geq (-\Delta)^su(t,x)-\pazocal{R}u(t,0)$ and then, by the maximum principle, $w(x,y) \geq -y^av_y(x,y)$ on $\mathbb{R}^n\times\mathbb{R}^+$. Since 
\[
w(x,0)=-\lim_{y\rightarrow 0^+}y^av_y(x,y) \ \text{ in } \ \Lambda,
\] the previous item implies that, for all $x\in\Lambda$ and $y>0$,
\begin{equation}\label{eqn:w-em-Lambda}
w(x,y)\geq w(x,0)-\frac{nC_0}{1+a}y^{1+a}.
\end{equation}
\end{enumerate} 
From  $(i)$ and $(iii)$, we have that \eqref{eqn:w-em-Lambda} actually holds for all $x\in\mathbb{R}^n$ and $y>0$. Furthermore, since $w$ is non-negative and $C^\alpha_x$, we conclude
\[
w(x,y)^2 - w(x,0)^2 \ge - \frac{nC_0}{1+a} y^{1+a} [w(x,y)+w(x,0)] \ge - K y^{1+a}(r+y)^\alpha,
\] for all $x\in B_r$, $y>0$, and a uniform constant $K>0$.

We now use the change of variable $\tau(y) \coloneqq \big(\frac{y}{1+a}\big)^{1+a}$ and define $\tilde{w}(x,\tau)\coloneqq w(x,y)$. Then, the above inequality can be rewritten as
\begin{equation}\label{tildew}
\tilde{w}(x,y)^2 - \tilde{w}(x,0)^2 \ge -K' \tau(r + \tau^{1/(1+a)})^\alpha,
\end{equation} for all $x\in B_r$, $y>0$, and a uniform constant $K'>0$. Using that $y^{-a} \partial_y \big(w(x,y)^2 \big) = \partial_{\tau} \big(\tilde{w}(x,\tau)^2 \big)$, we have that
\[
\begin{split}
\limsup_{y\rightarrow 0^{+}} \int_{B_r} \frac{y^{-a} \partial_y \big(w(x,y)^2 \big)}{(|x|^2+y^2)^{(n-1-a)/2}} \, \dd x = \limsup_{s\rightarrow 0^{+}} \int_{B_r} \frac{\partial_{\tau} \big(\tilde{w}(x,\tau)^2 \big)}{(|x|^2 + (1+a)^2 \tau^{2/(1+a)})^{(n-1-a)/2}} \, \dd x .
\end{split}
\] To estimate the right hand side above, we consider the average with respect to $\tau \in [0,\epsilon]$ and we use Fubini's Theorem to obtain
\[\begin{split}
I_\epsilon & \coloneqq \frac{1}{\epsilon} \int_0^\epsilon\int_{B_r}  \frac{\partial_{\tau} \big(\tilde{w}(x,\tau)^2 \big)}{(|x|^2+(1+a)^2 \tau^{2/(1+a)})^{(n-1-a)/2}} \, \dd x \, \dd \tau \\
  & = \frac{1}{\epsilon}  \int_{B_r} \left( \frac{\tilde{w}(x,\epsilon)^2}{(|x|^2+(1+a)^2\epsilon^{2/(1+a)})^{(n-1-a)/2}}-\frac{\tilde{w}(x,0)^2}{|x|^{n-1-a}} \right) \, \dd x \\
  & \quad  - \frac{1}{\epsilon} \int_0^\epsilon \int_{B_r} \tilde{w}(x,\tau)^2  \frac{\dd}{\dd \tau} \left(|x|^2+(1+a)^2 \tau^{2/(1+a)}\right)^{-(n-1-a)/2} \, \dd x \, \dd \tau.
\end{split}
\] Observe that
\[
\frac{\dd}{\dd \tau} \left(|x|^2+(1+a)^2 \tau^{2/(1+a)}\right)^{-(n-1-a)/2} \le 0.
\] Hence, by \eqref{tildew} and the fact that $w(\cdot,0)=\tilde{w}(\cdot,0) \in C_x^\alpha$, we have 
\[\begin{split}
I_\epsilon & \ge \frac{1}{\epsilon} \int_{B_r} \left( \frac{\tilde{w}(x,0)^2 - K' \epsilon (r+\epsilon^{1/(1+a)})^\alpha}{(|x|^2 + (1+a)^2 \epsilon^{2/(1+a)})^{(n-1-a)/2}} - \frac{\tilde{w}(x,0)^2}{|x|^{n-1-a}} \right) \, \dd x \\
           & \geq \int_{B_r}\frac{-K'(r+\epsilon^{1/(1+a)})^\alpha}{|x|^{n-1-a}} \, \dd x  + \frac{C}{\epsilon} \int_{B_r} \left[ \frac{|x|^{2\alpha}}{(|x|^2 + (1+a)^2\epsilon^{2/(1+a)})^{(n-1-a)/2}} - \frac{|x|^{2\alpha}}{|x|^{n-1-a}}\right] \, \dd x \\
           & \eqqcolon I_{1\epsilon}+I_{2\epsilon}.
\end{split}\] We have
\[\lim_{\epsilon\rightarrow 0}I_{1\epsilon} = - \frac{K' n \omega_{n}}{a + 1} r^{\alpha+1+a} = -K'C_{n,a}r^{\alpha+1+a}.
\] For the second term $I_{2\epsilon}$, we split the integral over $B_{\epsilon^\beta}$ and over $B_r\setminus B_{\epsilon^\beta}$, denoting these by $I^1_{2\epsilon}$ and $I^2_{2\epsilon}$, respectively, and the exponent $\beta > 0$ is yet to be chosen. On the one hand, to estimate $I_{2\epsilon}^{1}$, we choose $\beta \in \big( \frac{1}{2\alpha+a+1}, \frac{1}{a+1} \big)$, and we have that
\[
\lim_{\epsilon \rightarrow 0} I^1_{2\epsilon} \ge - \lim_{\epsilon \rightarrow 0} \frac{C}{\epsilon} \int_{B_{\epsilon^\beta}} \frac{|x|^{2\alpha}}{|x|^{n-1-a}} \, \dd x = - \frac{C n \omega_{n}}{2 \alpha + a + 1} \, \lim_{\epsilon\rightarrow 0}  \epsilon^{\beta(2\alpha+a+1)-1} = 0.
\] On the other hand, for all $|x| \ge \epsilon^\beta$, we have $\epsilon^{2/(1+a)} \le |x|^2$, so that
\[
\left(|x|^2+(1+a)^2\epsilon^{2/(1+a)}\right)^{(n-1-a)/2}\le  C\left(|x|^{n-1-a}+C\epsilon^{2/(1+a)}|x|^{n-3-a}\right)
\] and the term $I_{2\epsilon}^{2}$ can be estimated as
\[\begin{split}
I^2_{2\epsilon} & \ge \frac{C}{\epsilon} \int_{\epsilon^\beta}^r \left[ \frac{\rho^{n-1+2\alpha}}{\rho^{n-1-a} + C\epsilon^{2/(1+a)} \rho^{n-3-a}} - \frac{\rho^{n-1+2\alpha}}{\rho^{n-1-a}}\right] \, \dd \rho \\
 & = - \frac{C}{\epsilon} \int_{\epsilon^\beta}^r \rho^{2\alpha+a} \frac{\epsilon^{2/(1+a)}}{\rho^2+C\epsilon^{2/(1+a)}} \, \dd \rho \\ 
 & \ge -\frac{C\epsilon^{2/(1+a)}}{\epsilon}\int_{\epsilon^\beta}^r\rho^{2\alpha+a-2} \, \dd \rho \\
 & \ge -C_r\epsilon^{2/(1+a)-1} \left[1+\epsilon^{\beta(2\alpha+a-1)}\right].
\end{split}\]
Recall that $2 > 1 + a$, and so we only need to consider the case $2 \alpha +a-1<0$, since otherwise we clearly have $\lim_{\epsilon\rightarrow 0} I^2_{2\epsilon}\ge 0$. Moreover, since $\beta<1/(1+a)$, we have
\[
\frac{2}{1+a}-1+\beta(2\alpha+a-1) \ge \frac{2\alpha}{1+a} > 0,
\]
which also gives $\lim_{\epsilon\rightarrow 0}I^2_{2\epsilon}\ge 0$, so that $\lim_{\epsilon\rightarrow 0}I_{2\epsilon}\ge0$. Hence, we conclude that
\[
\liminf_{\epsilon\rightarrow 0}I_\epsilon\geq -K'C_{n,a}r^{\alpha+1+a}.
\] From this, we deduce
\[\begin{split}
\limsup_{\epsilon\rightarrow 0} \int_{B_r} \frac{\partial_\tau \big(\tilde{w}(x,\epsilon)^2\big)}{(|x|^2 + (1+a)^2 \epsilon^{2/(1+a)})^{(n-1-a)/2}} \, \dd x 
\geq \liminf_{\epsilon\rightarrow 0}I_\epsilon\geq -K'C_{n,a}r^{\alpha+1+a},
\end{split}\] which is what we wanted.

To show the second claim of the lemma, we observe that, by the $C_x^\alpha$-regularity of $w$, we have
\[\begin{split}
\left| \int_{B_r} \partial_y \left( \big(|x|^2 + y^2\big)^{-(n-1-a)/2} \right) y^{-a} w (x,y)^2 \, \dd x \right| & \leq \int_{B_r}\frac{y^{1-a}}{(|x|^2+y^2)^{(n+1-a)/2-\alpha}}\, \, \dd x \\
 & \leq Cy^{1-a}\int_0^r\frac{\rho^{n-1}}{(\rho^2+y^2)^{(n+1-a)/2-\alpha}} \, \dd \rho \\
 & \leq Cy^{1-a}\int_0^r\frac{\rho^{n-1}}{(\rho+y)^{n+1-a-2\alpha}} \, \dd \rho  \\ 
 & \leq C_r\frac{y^{1-a}}{y^{1-a-2\alpha}}=C_ry^{2\alpha},
\end{split}\]
which gives
\[
\lim_{y\rightarrow 0^+} \int_{B_r} \partial_y \left( \big(|x|^2 + y^2\big)^{-(n-1-a)/2} \right) y^{-a} w (x,y)^2 \, \dd x = 0. \qedhere
\]
\end{proof}

The next lemma is the result \cite[Lemma 4.7]{CaFi-08} on the first eigenvalue of a weighted Laplacian on the half-sphere. The result applies to our modified function $w$ as proved below.

Let us denote by $\mathbb{S}^{n} \subset \mathbb{R}^{n+1}$ the $n$-dimensional sphere, and set
\[
\mathbb{S}^n_+ \coloneqq \mathbb{S}^n \cap \{x_{n+1} \ge 0\}.
\] Let us also denote
\[
\mathcal{H}^{1/2}_{\, 0} \coloneqq \Big\{ h\in H^{1/2}\big(\partial(\mathbb{S}^n_+)\big); ~ h = 0 \text{ on } \partial(\mathbb{S}^n_+) \cap \{x_{n+1} = 0\} \cap \{x_n\geq 0\} \Big\}.
\] In other words, $h \in \mathcal{H}^{1/2}_{\, 0}$ when it is Sobolev in the boundary $\partial \mathbb{S}^n_+ \simeq \mathbb{S}^{n-1}$  of the upper sphere, and it vanishes on the upper part of the $(n-1)$-dimensional sphere $\partial(\mathbb{S}^n_+) \cap \{x_{n+1} = 0\}$.

\begin{lemma} \cite[Lemma 4.7]{CaFi-08} \label{lem:eigenvalue}
We have
\[
\inf_{h \in \mathcal{H}^{1/2}_{\, 0}} \frac{\displaystyle \int_{\mathbb{S}^n_+}|\nabla_\theta h|^2y^{-a}\, \dd \sigma }{\displaystyle \int_{\mathbb{S}^n_+}h^2y^{-a}\, \dd \sigma }=(1-s)(n-1+s),
\] where $\nabla_\theta$ is the derivative with respect to the angular variables.
\end{lemma}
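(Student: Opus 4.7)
The strategy is to exhibit a positive eigenfunction with the claimed eigenvalue and then invoke positivity to conclude it is the smallest; this is exactly \cite[Lemma 4.7]{CaFi-08}, which could be cited directly.

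First I would seek a positive, $(1-s)$-homogeneous, $L_{-a}$-harmonic function $U_0 \colon \mathbb{R}^{n+1}_+ \longrightarrow \mathbb{R}$ (where $\mathbb{R}^{n+1}_+ = \{y > 0\}$) that vanishes on $\{y = 0,\, x_n \ge 0\}$ and satisfies the natural boundary condition $\lim_{y\to 0^+} y^{-a} \partial_y U_0 = 0$ on $\{y = 0,\, x_n < 0\}$. In the two-dimensional case $(n = 1)$, one can take the explicit formula $U_0(x_n, y) = (\sqrt{x_n^2 + y^2} - x_n)^{1-s}$: writing $U_0 = \rho^{1-s} \phi(\omega)$ in polar coordinates $(x_n, y) = \rho(\cos\omega, \sin\omega)$, so that $\phi(\omega) = (1 - \cos\omega)^{1-s}$, the equation $L_{-a} U_0 = 0$ reduces to the ODE
\[
\sin\omega\, \phi'' - (1 - 2s)\cos\omega\, \phi' + s(1-s)\sin\omega\, \phi = 0,
\]
which is readily verified.

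Once $U_0$ is in hand, the eigenvalue is identified by the general separation-of-variables formula: for any $U(z) = r^\kappa h(z/|z|)$ with $r = |z|$, the equation $L_{-a} U = 0$ on $\mathbb{R}^{n+1}_+$ is equivalent to
\[
-\operatorname{div}_{\mathbb{S}^n}\bigl( y^{-a} \nabla_{\mathbb{S}^n} h \bigr) = \kappa(\kappa + n - 1 - a)\, y^{-a}\, h \quad \text{on } \mathbb{S}^n_+.
\]
(Equivalently, computing $\int_{B_1^+} |\nabla U|^2 y^{-a}$ with $B_1^+ = B_1 \cap \{y > 0\}$ and integrating by parts yields the same identity, with no contribution from the flat piece $B_1 \cap \{y = 0\}$ since $y^{-a}$ vanishes there for $-a > 0$.) Plugging $\kappa = 1-s$ and $a = 1-2s$, the associated spherical eigenvalue is $(1-s)(n-1+s)$. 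Taking $h^\star := U_0|_{\mathbb{S}^n_+}$ then provides an element of $\mathcal{H}^{1/2}_{\,0}$ with Rayleigh quotient exactly $(1-s)(n-1+s)$.

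To conclude that this is in fact the infimum, I would use that $h^\star > 0$ in the interior of $\mathbb{S}^n_+$ (since $U_0 > 0$ in $\mathbb{R}^{n+1}_+$) together with a Krein-Rutman / Perron-Frobenius argument applied to the compact self-adjoint resolvent of the weighted Laplacian: any eigenfunction associated to a smaller eigenvalue could be chosen non-negative, which would contradict orthogonality in $L^2(\mathbb{S}^n_+, y^{-a}\,\dd\sigma)$ with the strictly positive $h^\star$. The main obstacle is the construction of $U_0$ for $n \ge 2$: the naive analogue $(|z| - z_n)^{1-s}$ fails to be $L_{-a}$-harmonic in $\mathbb{R}^{n+1}$ when $n \ge 2$, so either one must exhibit $U_0$ via an integral representation (e.g., a Poisson-type convolution of the two-dimensional solution against the transverse variables $(x_1, \ldots, x_{n-1})$), or one must argue abstractly via Martin boundary theory together with an Almgren-type frequency monotonicity formula to identify the correct homogeneity degree of the minimal positive solution.
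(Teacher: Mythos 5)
Your overall strategy --- exhibit an explicit $(1-s)$-homogeneous, positive, $L_{-a}$-harmonic function vanishing on $\{y=0,\ x_n\ge 0\}$, read off the spherical eigenvalue from the homogeneity, and use positivity to identify it as the first eigenvalue --- is exactly the one in the paper and in \cite[Lemma 4.7]{CaFi-08}. Your eigenvalue identification via $\kappa(\kappa+n-1-a)$ with $\kappa=1-s$, $a=1-2s$ is correct (the paper instead evaluates $L_{-a}\bar H=0$ at the north pole in spherical coordinates, which yields the same number), and your positivity/orthogonality argument for minimality is a legitimate substitute for the paper's citation of \cite[Proposition 5.4]{SEE!!}.

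The gap is your claim that constructing $U_0$ for $n\ge 2$ is ``the main obstacle,'' requiring a Poisson-type convolution or Martin boundary theory; recall the paper works precisely in dimension $n\ge 2$, so leaving this open leaves the lemma unproved in the relevant setting. In fact there is no obstacle: the two-dimensional profile extended constantly in the transverse variables,
\[
\bar H(x,y)=\bigl(\sqrt{x_n^2+y^2}-x_n\bigr)^{1-s},
\]
is $L_{-a}$-harmonic in $\mathbb{R}^{n}\times\mathbb{R}^+$ for every $n$. Since $\bar H$ depends only on $(x_n,y)$, one has $\operatorname{div}_{x,y}(y^{-a}\nabla_{x,y}\bar H)=\partial_{x_n}(y^{-a}\partial_{x_n}\bar H)+\partial_y(y^{-a}\partial_y\bar H)$, i.e.\ the two-dimensional weighted operator applied to the two-dimensional solution, so the verification you carried out for $n=1$ already suffices in all dimensions. (The function you correctly reject, $(|z|-z_n)^{1-s}$ with $|z|$ the full $(n+1)$-dimensional norm, is simply not the candidate used here.) Moreover $\bar H$ is homogeneous of degree $1-s$ under the joint scaling $(x,y)\mapsto\lambda(x,y)$, vanishes exactly on $\{y=0,\ x_n\ge 0\}$, and is positive elsewhere, so its restriction to $\mathbb{S}^n_+$ is admissible. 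The paper verifies $L_{-a}\bar H=0$ by writing $\bar H=G^{1+a}$ with $G=(\sqrt{x_n^2+y^2}-x_n)^{1/2}$ the harmonic imaginary part of $z\mapsto z^{1/2}$ and using $|\nabla_{x,y}G|^2=GG_y/y$, which is equivalent to your ODE computation. With this one observation your argument closes, with no need for Almgren monotonicity or Martin boundary theory.
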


\begin{proof} For convenience of the reader, we reproduce the proof by Caffarelli and Figalli. Let 
\[
\bar{H}(x,y)\coloneqq(\sqrt{x_n^2+y^2}-x_n)^{1-s}
\] and denote by $\bar{h}(\theta)$ its restriction to $\mathbb{S}^n_+$, which gives $\bar{H}=r^{1-s}\bar{h}(\theta)$. As shown in \cite[Proposition 5.4]{SEE!!}, $\bar{h}$ is the first eingenfunction related to the minimization problem above. If $\lambda_1$ is the correspoding eigenvalue, our goal is to show that $\lambda_{1} = - (1-s)(n-1+s)$.

First, we claim that $\bar{H}$ satisfies $L_{-a} \bar{H} = 0$ for $y > 0$. Indeed, the function
\[
G(x_{n}, y) \coloneqq (\sqrt{x_n^2+y^2}-x_n)^{1/2}
\] is harmonic in $y > 0$ as the imaginary part of $z \longmapsto z^{1/2}$. Since $\bar{H} = G^{1+a}$, direct computation yields
\[
L_{-a} \bar{H} = L_{-a} G^{1+a} = (1+a) y^{-a} G^{a} \Delta_{x,y} G + (1+a) a y^{-a}  G^{a-1} \left( |\nabla_{x,y} G|^{2} - \frac{G G_{y}}{y} \right) = 0.
\]
Next, since $\bar{h}$ is an eigenfunction, we have $\operatorname{div}_\theta (y^{-a} \nabla_\theta \bar{h}) = \lambda_1 \bar{h}$. In particular,
\[
\Delta_\theta\bar{h}(0,1)=\lambda_1\bar{h}(0,1).
\] Moreover, in spherical coordinates,
\[
0=L_{-a}\bar{H}=\Delta_r\bar{H}+\frac{n}{r}\bar{H}_r+\frac{1}{r^2}\Delta_\theta\bar{H}-\frac{a}{y}\bar{H}_y,
\] and we obtain
\[\begin{split}
0 & = \Delta_r\bar{H}(0,1)+n\bar{H}_r(0,1)+\Delta_\theta\bar{H}(0,1)-a\bar{H}_y(0,1) \\
  & = - (1-s)s\bar{h}(0,1)+(1-s)(n-a)\bar{h}(0,1)+\Delta_\theta\bar{h}(0,1).
\end{split}\]
Therefore,
\[
\lambda_1\bar{h}(0,1)=\Delta_\theta\bar{h}(0,1)=-(1-s)(n-1+s)\bar{h}(0,1).  \qedhere
\]
\end{proof} 

We now prove the monotonicity formula: the result and its proof are found in \cite[Lemma 4.8]{CaFi-08}. For the convenience of the reader, we reproduce the proof. 

\begin{lemma}[Monotonicity Formula]\label{lem:monotonicity}
Let $w$ be given by \eqref{eqn:w} and denote
\[
B_r^+ \coloneqq \{ z=(x,y)\in \mathbb{R}^n \times \mathbb{R}^+ ; ~ |z|<r \}.
\] For $r \in (0,1]$, define
\[
\varphi(r)\coloneqq \frac{1}{r^{2(1-s)}}\int_{B_r^+}\frac{|\nabla_{z} w(z)|^2y^{-a}}{|z|^{n-1-a}}\, \dd z. 
\] Then, there exists $C>0$ such that, for all $r \in (0,1]$,
\[
\varphi(r)\leq C \left( 1 + r^{2\alpha + \delta - a - 1} \right).
\]
\end{lemma}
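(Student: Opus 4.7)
The plan is to adapt the monotonicity formula argument of \cite[Lemma 4.8]{CaFi-08} to our setting. Writing $\varphi(r) = r^{2s-2} I(r)$ with $I(r) \coloneqq \int_{B_r^+} |\nabla w|^2 y^{-a} |z|^{a+1-n}\, dz$, direct differentiation in polar coordinates yields
\[
r \varphi'(r) = -2(1-s)\, \varphi(r) + \frac{1}{r^{2-2s}}\int_{\partial B_r^+ \cap \{y>0\}} \frac{|\nabla w|^2 y^{-a}}{r^{n-1-a}}\, d\mathcal{H}^n,
\]
so the problem reduces to bounding the spherical integral from below in terms of $\varphi(r)$ up to admissible errors.

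The next step is a Rellich-type identity: integrate by parts in $B_r^+$ against the vector field $y^{-a} w \nabla w / |z|^{n-1-a}$, using $L_{-a} w = 0$ in $\{y > 0\}$. The flat-boundary contribution produces a limit of the form $\lim_{y \to 0^+} \int_{B_r} y^{-a}\partial_y(w^2)/|x|^{n-1-a}\, dx$, and the derivative of the weight $|z|^{a+1-n}$ produces $\int_{B_r} \partial_y(|z|^{-(n-1-a)}) y^{-a} w^2$; these are exactly the two integrals controlled in \Cref{lem:antes-monoton}, which furnishes a remainder of order $r^{\alpha + 1 + a}$. After rearrangement, I obtain
\[
I(r) = \int_{\partial B_r^+ \cap \{y>0\}} \frac{y^{-a} w\, w_\nu}{r^{n-1-a}}\, d\mathcal{H}^n + O(r^{\alpha + 1 + a}),
\]
which expresses $I(r)$ as a single spherical quantity modulo a controlled error.

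Third, I rescale by setting $\tilde{w}(\theta) \coloneqq r^{s-1} w(r\theta)$ for $\theta \in \mathbb{S}^n_+$ and decompose $|\nabla w|^2 = w_\rho^2 + \rho^{-2}|\nabla_\theta w|^2$. Combining the identity of Step~2 with the formula for $r\varphi'(r)$ via Cauchy--Schwarz on the boundary term $w\, w_\rho$ yields
\[
r\varphi'(r) \ge \frac{c}{r^{2-2s}}\int_{\mathbb{S}^n_+} \Bigl( |\nabla_\theta \tilde{w}|^2 - (1-s)(n-1+s)\, \tilde{w}^2 \Bigr) y_\theta^{-a}\, d\sigma - C r^{2\alpha + \delta - a - 1}.
\]
If $\tilde{w}(\cdot, 0) \in \mathcal{H}^{1/2}_0$, then \Cref{lem:eigenvalue} would force the spherical integral to be non-negative. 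I cannot assert this directly, but \Cref{lem:zero-not-hull} (with the specific $\delta$ chosen there) asserts that $\operatorname{co}(\{\tilde{w}(\cdot, 0) \ge r^{\alpha + \delta + s - 1}\} \cap \partial \mathbb{S}^n_+)$ misses the origin. After a rotation sending this set into the half-equator $\{x_n \ge 0\}$, the function $(\tilde{w} - r^{\alpha + \delta + s - 1})_+$ is an admissible competitor in $\mathcal{H}^{1/2}_0$, and applying \Cref{lem:eigenvalue} to it together with the global H\"older bound of \Cref{cor:holderthm} recovers non-negativity of the spherical integral up to a further error of the same order $r^{2\alpha + \delta - a - 1}$.

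Assembling the pieces yields the differential inequality $r\varphi'(r) \ge - C r^{2\alpha + \delta - a - 1}$; integrating from $r$ to $1$ and using $\varphi(1) \le C$ (which follows from the uniform $L^\infty$-bound on $w(\cdot, 0)$ from \Cref{cor:holderthm} together with a standard Caccioppoli estimate for $L_{-a}$-harmonic functions on $B_1^+$) produces the stated bound $\varphi(r) \le C(1 + r^{2\alpha + \delta - a - 1})$. The main obstacle is the third step: converting the purely geometric information of \Cref{lem:zero-not-hull} into a quantitatively admissible test function in $\mathcal{H}^{1/2}_0$, in such a way that the loss in the eigenvalue estimate is exactly integrable against the boundary remainder $r^{\alpha + 1 + a}$ from Step~2. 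The value of $\delta$ prescribed in \Cref{lem:zero-not-hull} has been calibrated precisely so that both sources of error carry the same exponent $2\alpha + \delta - a - 1$.
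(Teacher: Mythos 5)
Your proposal follows essentially the same route as the paper: differentiate $\varphi$, use the Rellich/Pohozaev-type identity for $L_{-a}$-harmonic $w$ with the flat-boundary terms controlled by \Cref{lem:antes-monoton}, apply Cauchy--Schwarz and the eigenvalue bound of \Cref{lem:eigenvalue} to the truncation made admissible by \Cref{lem:zero-not-hull}, and integrate the resulting differential inequality (the paper carries this out rigorously via the truncated quantities $\varphi_\epsilon$ and a limiting argument). The only imprecision is your justification of $\varphi(1)\le C$: a ``standard Caccioppoli estimate'' does not by itself control the singular weight $|z|^{a+1-n}$ at the origin, but your own Step~2 identity at $r=1$ (which is how the paper proceeds, with a cutoff and two integrations by parts) supplies exactly this bound.
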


\begin{proof} 
Set 
\[
\varphi_\epsilon(r) \coloneqq \frac{1}{r^{2(1-s)}}\int_{B_r^+\cap \{y>\epsilon\}}\frac{|\nabla_{z} w(z)|^2y^{-a}}{|z|^{n-1-a}}\, \dd z.
\] By the Monotone Convergence Theorem, we can bound $\varphi$ by $\liminf_{\epsilon\rightarrow 0}\varphi_\epsilon$. Moreover, we note that $\varphi(r)$ is bounded by $\varphi(1)$. Hence, we only need to bound $\liminf_{\epsilon\rightarrow 0}\varphi_\epsilon(1)$. Let $\chi: \mathbb{R}^n\rightarrow [0,1]$ be a smooth compactly supported function with $\chi \equiv 1$ in $B_1\subset \mathbb{R}^n$. Thus, 
\[
\varphi_\epsilon(r) \leq \int_\epsilon^1\int_{\mathbb{R}^n}\frac{|\nabla_{z} w(z)|^2y^{-a}}{|z|^{n-1-a}} \chi(x) \, \dd x \, \dd y.
\]
The definition of $w$ in \eqref{eqn:w} gives $L_{-a} w = 0$ and so we have $L_{-a}(w^2)=2|\nabla_{z} w|^2y^{-a}$. Then, integration by parts gives
\[
\begin{split}
\varphi_\epsilon(r) & \leq -\int_\epsilon^1\int_{\mathbb{R}^n} \nabla_{z} (w^2) \cdot \nabla_{z} \left( \frac{1}{2|z|^{n-1-a}}\right)y^{-a}\chi(x) \, \dd x \, \dd y\\
& \quad -\int_\epsilon^1\int_{\mathbb{R}^n}\nabla_x (w^2)\cdot \nabla_x\chi(x) \frac{y^{-a}}{2|z|^{n-1-a}} \, \dd x \, \dd y+\left.\int_{\mathbb{R}^n}\partial_y(w^2)\frac{y^{-a}}{2|z|^{n-1-a}} \, \dd x \right\vert_{y=\epsilon}^{y=1}.
\end{split}
\] Using that $L_{-a} |z|^{-n+1+a} = C \delta_{(0,0)}$, we can integrate by parts once more to obtain
\[
\begin{split}
\varphi_\epsilon(r) & \leq \int_\epsilon^1\int_{\mathbb{R}^n} w^2 \Delta_x\chi(x) \frac{y^{-a}}{2|z|^{n-1-a}} \, \dd x \, \dd y + \int_\epsilon^1\int_{\mathbb{R}^n} w^2 \nabla_x\chi(x)\cdot\nabla_x\left(\frac{1}{|z|^{n-1-a}}\right)y^{-a} \, \dd x \, \dd y \\
       & \quad - \left.\int_{\mathbb{R}^n} w^2 \partial_y\left(\frac{1}{2|z|^{n-1-a}}\right)y^{-a}\chi(x) \, \dd x \right\vert_{y=\epsilon}^{y=1}
+\left.\int_{\mathbb{R}^n} \partial_y(w^2)\frac{y^{-a}\chi(x)}{2|z|^{n-1-a}} \, \dd x \right\vert_{y=\epsilon}^{y=1},
\end{split}
\]
since $(0,0)\notin [\epsilon,1]\times \mathbb{R}^n$. Recall that $\chi\equiv 1$ in $B_1$, that $w$ is of class $C^\alpha_x$, and that $\operatorname{supp} \chi \subseteq B_R$ for some $R>0$, so that
\[\begin{split}
 \int_\epsilon^1 \int_{\mathbb{R}^n} \left( w^2 \Delta_x\chi(x) \frac{y^{-a}}{2|z|^{n-1-a}}+ w^2 \nabla_x\chi(x)\cdot\nabla_x\left(\frac{1}{|z|^{n-1-a}}\right)y^{-a} \right) \, \dd x \, \dd y\\
 \leq C \int_\epsilon^1  y^{-a} \int_1^R \left( |r^2+y^2|^{\alpha+a/2}+|r^2+y^2|^{\alpha+(1+a)/2} \right) \, \dd r \, \dd y < + \infty.
\end{split}\]
Moreover, since $w$ is smooth for $y>0$, we obtain
\[
-\left.\int_{\mathbb{R}^n} w^2 \partial_y\left(\frac{1}{2|z|^{n-1-a}}\right)y^{-a}\chi(x) \, \dd x \right\vert_{y=1}
+\left.\int_{\mathbb{R}^n} \partial_y(w^2)\frac{y^{-a}\chi(x)}{2|z|^{n-1-a}} \, \dd x \right\vert_{y=1}< + \infty.
\]
Using \Cref{lem:antes-monoton}, we conclude that 
\[
\varphi(r)\leq \liminf_{\epsilon\rightarrow 0}\varphi_\epsilon(1) \le C.
\] Hence, we have that $\varphi_\epsilon(r) \longrightarrow \varphi(r)$ locally uniformly in $(0,1]$.
This shows, in particular, that $\varphi(r)$ is well-defined. Now, take $\epsilon<r$ and use again that $L_{-a} |z|^{-n+1+a} = C \delta_{(0,0)}$ to obtain
\[
\begin{split}
\varphi'_\epsilon(r) & = - \frac{1-s}{r^{3-2s}} \int_{B_r^+\cap\{y>\epsilon\}} \frac{L_{-a}(w^2)}{|z|^{n-1-a}} \, \dd z + \frac{1}{r^n} \int_{\partial B_r^+\cap\{y>\epsilon\}} |\nabla_{z} w(z)|^2 y^{-a} \, \dd \sigma \\
     & = - \frac{2(1-s)}{r^{1+2(1-s)}} \int_{\partial(B_r^+\cap\{y>\epsilon\})}w\nabla_{z} w \cdot \nu \frac{y^{-a}}{|z|^{n-1-a}} \, \dd \sigma \\
     & \, \quad + \frac{1-s}{r^{1+2(1-s)}} \int_{B_r^+\cap\{y>\epsilon\}} \nabla_{z}   (w^2) \cdot \nabla_{z}   \left(\frac{1}{|z|^{n-1-a}}\right) y^{-a} \, \dd z \\   
     & \, \quad + \frac{1}{r^n}\int_{\partial B_r^+\cap\{y>\epsilon\}}|\nabla_{z} w(z)|^2y^{-a}\, \dd \sigma \eqqcolon A_\epsilon+B_\epsilon+C_\epsilon.
\end{split}
\] We estimate each of these three terms. By \Cref{lem:antes-monoton} and the Cauchy-Schwarz's inequality, 
\[
\begin{split}
\lim_{\epsilon\rightarrow 0^+}A_\epsilon & = - \frac{1-s}{r^{n+1}}\lim_{\epsilon\rightarrow 0^+}\int_{\partial B_r^+\cap\{y>\epsilon\}}(w^2)_ry^{-a}\, \dd \sigma + \frac{1-s}{r^{1+2(1-s)}}\lim_{\epsilon\rightarrow 0^+}\int_{B_r^+\cap\{y=\epsilon\}}(w^2)_y\frac{y^{-a}}{|z|^{n-1-a}}\, \dd \sigma 
 \\
 &
\geq -\frac{(1-s)^2}{r^{n+2}}\int_{\partial B_{r,+}}w^2 y^{-a}\, \dd \sigma -\frac{1}{r^n}\int_{\partial B_{r,+}}(w_r)^2 y^{-a}\, \dd \sigma -Cr^{\alpha-1} \\
&
\geq-\frac{(1-s)^2}{r^{n+2}}\int_{\partial B_{r,+}}w^2 y^{-a}\, \dd \sigma -\frac{1}{r^n}\int_{\partial B_{r,+}}|\nabla_{z} w(z)|^2y^{-a}\, \dd \sigma 
 \\
 & \, \quad +\frac{1}{r^{n+2}}\int_{\partial B_{r,+}}|\nabla_\theta w|^2 y^{-a}\, \dd \sigma -Cr^{\alpha-1}.
\end{split}
\] Also,
\[ 
\begin{split}
\lim_{\epsilon\rightarrow 0^+}B_\epsilon & = - \frac{(1-s)(n-1-a)}{r^{n+2}} \lim_{\epsilon\rightarrow 0^+} \int_{\partial B_r^+\cap\{y>\epsilon\}}w^2y^{-a}\, \dd \sigma \\
& \, \quad  - \frac{1-s}{r^{1+2(1-s)}}\lim_{\epsilon\rightarrow 0^+} \int_{B_r^+\cap\{y=\epsilon\}} w^2 \partial_y \left(\frac{1}{|z|^{n-1-a}}\right)y^{-a}\, \dd \sigma \\ 
     & = - \frac{(1-s)(n-1-a)}{r^{n+2}}\int_{\partial B_{r,+}}w^2y^{-a}\, \dd \sigma.
\end{split}
\]
Hence, using that $\varphi_\epsilon$ converges uniformly as $\epsilon\rightarrow 0^+$, we have that the distributional derivative $D_r\varphi$ satisfies
\[\begin{split}
D_r\varphi\geq \frac{1}{r^{n+2}}\int_{\partial B_{r,+}}|\nabla_\theta w|^2 y^{-a}\, \dd \sigma -Cr^{\alpha-1}
+\frac{\lambda_1}{r^{n+2}}\int_{\partial B_{r,+}}w^2y^{-a}\, \dd \sigma ,
\end{split}\]
where $\lambda_1$ as in (the proof of) \Cref{lem:eigenvalue}. Consider $\bar{W}\coloneqq (w-r^{\alpha+\delta})^-$. By \Cref{lem:zero-not-hull}, $\bar{W}$ is admissible for the eigenvalue problem in \Cref{lem:eigenvalue}. We compute 
\[
|\nabla_\theta \bar{W}|^2\leq|\nabla_\theta w|^2 \ \text{ and } \ (w-\bar{W})^2 + 2\bar{W}(w-\bar{W}) + \bar{W}^2=w^2,
\] to conclude that
\[\begin{split}
D_r\varphi\geq \frac{\lambda_1}{r^{n+2}}\int_{\partial B_{r,+}} \big[(w-\bar{W})^2+2\bar{W}(w-\bar{W})\big] \, y^{-a}\, \dd \sigma -Cr^{\alpha-1}
\geq-Cr^{2\alpha+\delta-a-2}-Cr^{\alpha-1},
\end{split}\]
since $|\bar{W}|\leq|w|\leq Cr^\alpha$ and $|w-\bar{W}|\leq r^{\alpha+\delta}$. Therefore, integration in the interval $[r,1]$ yields
\[
\varphi(r)\leq \varphi(1)+Cr^{2\alpha+\delta-a-1}+C\leq C(1+r^{2\alpha+\delta-a-1}) 
\]
for all $r \in (0, 1]$, since $1+a>0$ and $\varphi(1)$ is universally bounded.
\end{proof}

Now, we we are able to obtain the optimal modulus of continuity of $w$. In particular, we obtain an improved regularity and the optimal regularity of the lower order and free boundary terms, respectively.

\begin{proposition}\label{prop:optimal-decay}
Let $u$ be the solution of \eqref{prin}. Then,
\[
[(-\Delta)^su-\pazocal{R}u]\chi_{\{u=\psi\}}\in L^\infty((0,T];C^{1-s}(\mathbb{R}^n))\quad \text{and} \quad \pazocal{R}u \in L^\infty((0,T];C^{1-s+\gamma}(\mathbb{R}^n)).
\]
\end{proposition}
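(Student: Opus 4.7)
Our plan is to bootstrap the exponent $\alpha$ produced by \Cref{holderthm} up to the optimal value $1-s$ using the monotonicity formula of \Cref{lem:monotonicity}, and then to feed this optimal decay into the fractional heat regularity \eqref{regularityheateq} to obtain the claimed estimate on $\pazocal{R}u$.

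Fix $t \in (0,T]$ and a free boundary point $x_0 \in \partial\{u(t,\cdot) = \psi\}$, which we may assume is the origin, and work with the function $w$ defined in \eqref{eqn:w}. Given any current exponent $\alpha$ for which $\sup_{B_r}|w(\cdot,0)| \le Cr^{\alpha}$ at free boundary points, \Cref{lem:monotonicity} yields $\varphi(r) \le C(1 + r^{2\alpha + \delta - a - 1})$ for $r \in (0,1]$, and we recall that $1 + a = 2(1-s)$. A standard Morrey--Campanato estimate for the weighted operator $L_{-a}$ (the weight $y^{-a}$ belongs to the Muckenhoupt class $A_2$ since $|a| < 1$) translates any bound of the form $\varphi(r) \le Mr^{2\mu - 2(1-s)}$ into a pointwise H\"older estimate $\sup_{B_r^+}|w| \le C r^{\mu}$. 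Consequently, if $\alpha + \delta/2 \ge 1-s$, then the constant term dominates in the bound for $\varphi$ and we obtain $\sup_{B_r^+}|w| \le Cr^{1-s}$; otherwise we obtain $\sup_{B_r^+}|w| \le Cr^{\alpha + \delta/2}$, a strict improvement by $\delta/2$.

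Since $\delta = \delta(\alpha)$ stays bounded below as long as $\alpha \le 1-s$, finitely many iterations terminate in the first alternative, producing the optimal decay $\sup_{B_r^+}|w| \le Cr^{1-s}$ at every free boundary point, with a constant uniform in $t$ and $x_0$ (all the ingredients---$C_0$ from \Cref{lem:semiconvexity}, the H\"older constant of $\pazocal{R}u$, and the bounds of \Cref{cor:lipschitz}---are uniform in these parameters). Patching this pointwise estimate with the smoothness of $w(\cdot,0)$ in the interior of $\Lambda$ and in $\mathbb{R}^n \setminus \Lambda$ (where $w(\cdot,0)$ vanishes identically) exactly as in the proof of \Cref{cor:holderthm} gives the first claim.

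For the second claim, write
\[
\partial_t u + (-\Delta)^s u = \bigl[(-\Delta)^s u - \pazocal{R} u\bigr]\chi_{\{u = \psi\}} + \pazocal{R} u,
\]
and observe that the right-hand side now belongs to $L^\infty\bigl((0,T]; C^{1-s}(\mathbb{R}^n)\bigr)$ by the first claim together with \eqref{initialregr}. Hence \eqref{regularityheateq} yields $(-\Delta)^s u \in L^\infty((0,T]; C^{1-s-0^+}(\mathbb{R}^n))$ and, combined with the $L^\infty$-bound on $u$, $u \in L^\infty((0,T]; C^{1+s-0^+}(\mathbb{R}^n))$. Expanding $\pazocal{R} = \pazocal{I} + b\cdot\nabla + r$ and using that $\pazocal{I} : C^{2\sigma+\beta} \to C^\beta$, a case analysis on $\sigma \le 1/2$ versus $\sigma > 1/2$ shows that the worst-case H\"older exponent of $\pazocal{R}u$ equals $1 + s - \max\{1,2\sigma\}$, which exceeds $1 - s + \gamma = 1 - \max\{\sigma,1/2\}$ by the positive gap $s - \max\{\sigma,1/2\} = \gamma > 0$; this margin absorbs the $0^+$-loss and yields the second claim. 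The main obstacle is the Campanato-type embedding for $L_{-a}$-harmonic functions used above, where one must control the correction terms coming from the weight $y^{-a}$ and check that the constants stay uniform as $\alpha$ is updated through the iteration.
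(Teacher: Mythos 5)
Your overall architecture---iterate the monotonicity formula of \Cref{lem:monotonicity} to push the decay exponent of $w$ at free boundary points up to $1-s$, then feed the result into \eqref{regularityheateq}---is the paper's. Your replacement of the paper's truncation--convex-hull--Poincar\'e step by a weighted Morrey--Campanato embedding anchored at $w(0,0)=0$ is a plausible variant of that single step (the measure $y^{-a}\,\dd z$ is doubling and admits a Poincar\'e inequality by \cite{poincare}, and the telescoping of averages works since $w$ is continuous up to $\{y=0\}$ with $w(0,0)=0$), but note that Campanato theory only controls $L^{2}(y^{-a}\,\dd z)$ averages of $w$ over balls centered at the origin; to reach the pointwise bound $\sup_{B_{r}^{+}}|w|\le Cr^{\mu}$ you still need the local boundedness of $L_{-a}$-subsolutions, which is exactly the mean-value step $\sup_{B^+_{r/2}}\tilde w^2\le Cr^{-(n+1-a)}\int_{B_r^+}\tilde w^2y^{-a}$ that the paper performs.

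The genuine gap is that you decouple the two bootstraps. The iteration on the decay of $w$ cannot be run in isolation: every ingredient feeding the monotonicity formula at a new exponent $\alpha'$---\Cref{lem:b1b5}\eqref{b3}, \Cref{lem:zero-not-hull} (through \Cref{holderthm}), the H\"older bound of \Cref{cor:holderthm} for $w(\cdot,0)$, and the error terms $C_{1}|x|^{\gamma}$ in \Cref{lem:antes-monoton} and \Cref{lem:monotonicity}---is capped by the current spatial H\"older exponent of $\pazocal{R}u(t,\cdot)$. That exponent starts at $\gamma=s-\max\{\sigma,1/2\}$, which is strictly smaller than $1-s$ whenever $s<3/4$ (and also whenever $\sigma$ is close to $s$), so your iteration stalls near $\gamma$ instead of reaching $1-s$. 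The paper avoids this by interleaving: after each improvement $\beta_{\alpha}=\min\{1-s,\alpha+\delta/2\}$ of $w$, it runs the heat-equation estimate and \cite[Proposition 2.1.8]{silvestre} to upgrade $\pazocal{R}u\in L^{\infty}_{t}C^{\beta_{\alpha}+\gamma}_{x}$ \emph{before} returning to the monotonicity formula; since $\delta<\gamma$, the new exponent of $w$ never outruns the newly available regularity of $\pazocal{R}u$, and it is precisely this upgrade that raises the cap at each step. Relatedly, your derivation of the second claim asserts that \eqref{initialregr} already places the right-hand side of the equation in $L^{\infty}_{t}C^{1-s}_{x}$; but \eqref{initialregr} only gives $\pazocal{R}u\in L^{\infty}_{t}C^{\alpha+\gamma}_{x}$, and $\alpha+\gamma<2\gamma\le 2s-1<1-s$ whenever $s<2/3$, so even granting the first claim you would still need to iterate the heat-equation step finitely many times to reach $C^{1-s+\gamma}$.
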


\begin{proof} 
Let $\eta_\epsilon$ be a mollifier, define $w_\epsilon \coloneqq \eta_\epsilon \ast w$, and observe $L_{-a} w_\epsilon = (L_{-a} w) \ast \eta_\epsilon = 0$. Moreover,
\[
w_\epsilon(x,y)-w_\epsilon(x,0)\geq -\frac{nC_0}{1+a}y^{1+a}.
\] Set $\bar{W}_\epsilon\coloneqq (w_\epsilon-r^{\alpha+\delta})^+$, which satisfies  $L_{-a} \bar{W}_\epsilon \le 0$ in the set $\{y>0\}$ and 
\[
\bar{W}_\epsilon(x,y)-\bar{W}_\epsilon(x,0)\geq -\frac{nC_0}{1+a}y^{1+a}.
\] We now consider, for $(x,y) \in \mathbb{R}^n\times\mathbb{R}$,
\[
\tilde{w}_\epsilon(x,y)\coloneqq \bar{W}_\epsilon(x,|y|)+\left(1+\frac{nC_0}{1+a}\right)|y|^{1+a}.
\] Note that $L_{-a} \tilde{w}_\epsilon \le 0$ in the set $\{y \neq 0\}$, and \[
\tilde{w}_\epsilon(x,y)-\tilde{w}_\epsilon(x,0)\geq|y|^{1+a}.
\] Since $\tilde{w}_\epsilon$ is smooth in $x$, we conclude $\tilde{w}_\epsilon$ is a subsolution for $L_{-a}$ in the whole $\mathbb{R}^n\times\mathbb{R}$. Then, let $\epsilon\rightarrow 0$ so that
\[
\tilde{w}(x,y)\coloneqq (w(x,|y|)-r^{\alpha+\delta})^++\left(1+\frac{nC_0}{1+a}\right)|y|^{1+a}
\] is a subsolution globally. By \Cref{lem:zero-not-hull}, the convex hull of the set where $\tilde{w} (\cdot,0) \ge 0$ does not contain the origin and it is thus contained in ``some half'' of $B_r$. In particular, $\tilde{w} (\cdot,0) \equiv 0$ in a set which is bigger than the other half of $B_{r}$. So, by a weighted Poincaré inequality (see \cite[Theorem 1.5]{poincare}) and the definition of $\varphi$ (see \Cref{lem:monotonicity}), we obtain, for all $r \in (0, 1]$,
\[
\begin{split}
\int_{ B^+_r}\tilde{w} (z)^2y^{-a} \, \dd z & \leq Cr^2 \int_{ B^+_r}|\nabla_{z}\tilde{w} (z)|^2y^{-a} \, \dd z \\ 
    & \leq Cr^2\left[\int_{ B^+_r}|\nabla_{z} w (z)|^2y^{-a} \, \dd z +r^{n+1+a}\right] \\
    & \leq Cr^{n+2} \big( \varphi(r)+r^{1+a} \big) \\
    & \leq Cr^{n+2} \big( 1+\varphi(r) \big),
\end{split}
\] since $|\nabla_{z}\tilde{w}|^2 \leq |\nabla_{z} w|^2+C|y|^{2a}$. Then, since $\tilde{w}$ is $L_{-a}$-subharmonic, we use \Cref{lem:monotonicity} and get
\[
\sup_{B^+_{r/2}} \tilde{w}^2 \le \frac{C}{r^{n+1-a}} \int_{ B^+_r} \tilde{w}(z)^2 y^{-a} \, \dd z \leq C \big(r^{1+a}+r^{2\alpha+\delta}\big).
\]

Hence, for all $r \in (0, 1]$,
\begin{equation}\label{iteratemonotonicity}
\sup_{B_r} w \le C \Big( \sup_{B^+_{r/2}} \tilde{w} + r^{\alpha + \delta} + r^{1+a} \Big) \le C \big( r^{1-s}+r^{\alpha+\delta/2} \big).
\end{equation} We conclude by the same argument as \Cref{cor:holderthm} that  
\[
\|w\|_{C_x^{\beta_\alpha}(\mathbb{R}^n)}\leq C, \quad \text{ where } \quad \beta_\alpha \coloneqq \min \{1-s, \alpha + \delta/2\}.
\] 
As remarked previously, $\delta<\gamma$, thus $\beta_\alpha<\gamma$. Hence, by \eqref{initialregr}, we have
\[
\big[(-\Delta)^su(t, \cdot)-\pazocal{R}u(t, \cdot)\big] \chi_{\{u(t, \cdot)=\psi\}}\in C^{\beta_\alpha} (\mathbb{R}^n).
\] Therefore, we have
\[
\partial_t u+(-\Delta)^su = \big[(-\Delta)^su-\pazocal{R}u \big] \chi_{\{u =\psi\}}+\pazocal{R}u\in L^\infty((0,T];C^{\beta_\alpha}(\mathbb{R}^n)).
\]
Hence, by \eqref{regularityheateq}, $(-\Delta)^su(t,\cdot)\in C^{\beta_\alpha-0^+}(\mathbb{R}^n)$, and by \cite[Proposition 2.1.8]{silvestre}, we have $u(t,\cdot)\in C^{\beta_\alpha+2s-0^+}(\mathbb{R}^n)$, thus 
\begin{equation}\label{regularityR}
\pazocal{R}u \in L^\infty((0,T];C^{\beta_\alpha+\gamma}(\mathbb{R}^n)).
\end{equation}

By the definition of $\delta$ (see \Cref{lem:zero-not-hull}), given $\alpha_0>0$, there exists $\delta_0>0$ such that $\delta\geq \delta_0>0$ for all $\alpha'\in[\alpha_0,1-s]$. If $\beta_\alpha=1-s$, the proposition follows. Otherwise, we apply the monotonicity formula  (as in \eqref{iteratemonotonicity}) $k$ times and the argument above to obtain 
\[
\sup_{B_r} w \le C \big( r^{1-s}+r^{\alpha+k\,\delta_0/2} \big), \quad \pazocal{R}u \in L^\infty((0,T];C^{\min \{1-s, \alpha +k\, \delta_0/2\}+\gamma}(\mathbb{R}^n)). 
\]
Choosing $k$ large enough and using the same argument as \Cref{cor:holderthm}, the proposition follows.
\end{proof}
\section{Almost optimal regularity in time}
We note that \Cref{prop:optimal-decay} implies that 
\[
\partial_t u+(-\Delta)^su = \big[(-\Delta)^su-\pazocal{R}u\big] \chi_{\{u=\psi\}}+\pazocal{R}u \in L^\infty((0,T];C^{1-s}(\mathbb{R}^n)),
\]
which gives (see \eqref{regularityheateq})
\begin{equation}\label{regtx}
\partial_t u \ \text{ and } \ (-\Delta)^su \ \in \ L^\infty((0,T];C^{1-s-0^+}(\mathbb{R}^n)).
\end{equation} From this, we are able to show the first step of the iteration procedure that eventually grants us the optimal regularity of the solution. We remark that by \eqref{gamma} and \Cref{prop:optimal-decay}, we have
\[
\pazocal{R}u\in L^\infty((0,T];C^{1-s+\gamma}(\mathbb{R}^n)).
\]

\begin{lemma}\label{lem:iterate-first}
We have $\big[(-\Delta)^su-\pazocal{R}u\big] \chi_{\{u=\psi\}}\in C_{t,x}^{\frac{1-s}{1+s}-0^+,1-s}((0,T]\times \mathbb{R}^n)$.
\end{lemma}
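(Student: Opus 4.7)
The plan is to prove the time Hölder regularity of $g(t,x) := [(-\Delta)^s u(t,x) - \pazocal{R}u(t,x)]\chi_{\{u(t,\cdot)=\psi\}}(x)$, whose spatial $C^{1-s}$ regularity is given by \Cref{prop:optimal-decay}, by transferring estimates on the time increment $\tilde\eta(y) := u(t_2, y) - u(t_1, y)$ into bounds on the nonlocal pieces of $g(t_1,\cdot) - g(t_2,\cdot)$. Fix $0 < t_1 < t_2 \le T$ and set $\tau := t_2 - t_1$. From the monotonicity $\partial_t u \ge 0$ of \Cref{lem:semiconvexity} we have $\tilde\eta \ge 0$ with $\tilde\eta \equiv 0$ on $\{u(t_2,\cdot)=\psi\}$; from \Cref{cor:lipschitz} we have $\|\tilde\eta\|_{L^\infty} \le C\tau$; and from \Cref{holderthm} iterated with the sharp exponent $\alpha = 1-s$ supplied by \Cref{prop:optimal-decay},
\[
\tilde\eta(y) \le u(t_2, y) - \psi(y) \le \bar{C}\, d(t_2, y)^{1+s}, \qquad d(t_2, y) := \operatorname{dist}(y, \{u(t_2,\cdot)=\psi\}).
\]
Combined, $\tilde\eta(y) \le C\,\min\{\tau,\, d(t_2, y)^{1+s}\}$ for all $y \in \mathbb{R}^n$.

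The key estimate is valid when $x \in \{u(t_2,\cdot)=\psi\}$: since $\tilde\eta(x) = 0$, $d(t_2, y) \le |y-x|$, and $\tilde\eta \ge 0$,
\[
|(-\Delta)^s u(t_1, x) - (-\Delta)^s u(t_2, x)| = c_{n,s}\int_{\mathbb{R}^n}\frac{\tilde\eta(y)}{|y-x|^{n+2s}}\,\dd y \le C\int_{\mathbb{R}^n}\frac{\min\{\tau,\,|z|^{1+s}\}}{|z|^{n+2s}}\,\dd z,
\]
and splitting this integral at the balancing radius $|z| = \tau^{1/(1+s)}$ and integrating yields the bound $C\tau^{(1-s)/(1+s)}$. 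An analogous dyadic split using the Pucci-type extremal operators $M^\pm_{\mathcal{L}_0}$ of order $2\sigma < 2s$ controls $|\pazocal{I}u(t_1, x) - \pazocal{I}u(t_2, x)|$ by $C\tau^{(1+s-2\sigma)/(1+s)} \le C\tau^{(1-s)/(1+s)}$. The linear contributions $b\cdot \nabla \tilde\eta(x) + r\,\tilde\eta(x)$ vanish at $x$: the second because $\tilde\eta(x) = 0$, and the first because $\tilde\eta$ attains its minimum $0$ at $x$ with the inherited $C^{1,\,2s-1-0^+}$ spatial regularity, forcing $\nabla\tilde\eta(x) = 0$. Hence $|g(t_1, x) - g(t_2, x)| \le C\tau^{(1-s)/(1+s)}$ whenever $x \in \{u(t_2,\cdot)=\psi\}$.

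For the remaining configurations, the case $x \notin \{u(t_1,\cdot)=\psi\}$ is trivial since then $g(t_1, x) = g(t_2, x) = 0$ by monotonicity of the contact set. Thus we may assume $x \in \{u(t_1,\cdot)=\psi\}\setminus\{u(t_2,\cdot)=\psi\}$, so that $g(t_2, x) = 0$, and define the transition time $t^{*} := \sup\{t \in [t_1, t_2] : u(t, x) = \psi(x)\} \in [t_1, t_2)$. Then $u(t^{*}, x) = \psi(x)$ by continuity of $u$ in $t$, and for $t > t^{*}$ the equation on $\{u > \psi\}$ gives $\partial_t u(t, x) = \pazocal{R}u(t, x) - (-\Delta)^s u(t, x)$. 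Passing to the limit $t \to (t^{*})^+$ — justified by the pointwise-in-$t$ continuity at $(t^{*}, x)$ of $(-\Delta)^s u$ and $\pazocal{R}u$, obtained via dominated convergence in the nonlocal integrals using the uniform bound $u \in L^\infty_t C^{2s-0^+}_x$ — we obtain $\partial_t u(t^{*}, x) = -g(t^{*}, x)$. Combined with $\partial_t u \ge 0$ and the nonnegativity $g \ge 0$ on the contact set (\Cref{comparison fractional and R}), this forces $g(t^{*}, x) = 0$. Applying the key estimate of the previous paragraph to the pair $(t_1, t^{*})$, with $t^{*} - t_1 \le \tau$, closes the argument:
\[
|g(t_1, x)| = |g(t_1, x) - g(t^{*}, x)| \le C(t^{*}-t_1)^{(1-s)/(1+s)} \le C\tau^{(1-s)/(1+s)}.
\]
The main technical obstacle is this justification of the transition-time passage to the limit, in particular verifying the pointwise-in-$t$ continuity of the nonlocal pieces at $(t^{*}, x)$; the customary $0^+$ loss in the final exponent originates from handling the nonlocal operators at the borderline spatial regularity $u(t, \cdot) \in C^{2s-0^+}$.
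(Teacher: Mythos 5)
Your proof is correct, but the core estimate is obtained by a genuinely different mechanism than the paper's. The paper reduces to contact points essentially as you do (via the transition time, invoking \Cref{comparison fractional and R} with less justification than you supply), but then estimates $(-\Delta)^su(t,x)-(-\Delta)^su(s,x)$ and $\pazocal{R}u(t,x)-\pazocal{R}u(s,x)$ by mollifying in space: it inserts $\phi_r(x-\cdot)$, controls the two fixed-time terms by the spatial $C^{1-s}$ regularity of $(-\Delta)^su$ and $\pazocal{R}u$ (cost $r^{1-s-0^+}$), moves the nonlocal operators onto $\phi_r$ in the middle term, and bounds that by $\|u(t,\cdot)-u(s,\cdot)\|_{L^\infty}\|(-\Delta)^s\phi_r\|_{L^1}\lesssim (t-s)r^{-2s}$ using only the Lipschitz-in-time bound of \Cref{cor:lipschitz}; optimizing $r=(t-s)^{1/(1+s)}$ gives the exponent, with a case analysis on $\sigma$ for the $\pazocal{R}$ part. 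You instead exploit the obstacle structure pointwise: $\tilde\eta=u(t_2,\cdot)-u(t_1,\cdot)\ge 0$ vanishes at the contact point together with its gradient and is bounded by $C\min\{\tau,\,d(t_2,\cdot)^{1+s}\}$, so the nonlocal operators applied to $\tilde\eta$ at $x$ are controlled by a single integral split at the balancing radius. Your route is more direct, dispenses with the $\sigma$-case analysis, and makes transparent where $(1-s)/(1+s)$ comes from; the paper's mollification argument is softer (it needs no sign information), is valid at every $x$, and is the template reused verbatim in the bootstrap of \Cref{improvedreg}.

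One dependency you should make explicit: the growth bound $u(t_2,y)-\psi(y)\le C\,d(t_2,y)^{1+s}$ is not literally \Cref{holderthm}, whose exponent is only $\alpha+2s$ with $\alpha<\gamma$. It does follow from \Cref{prop:optimal-decay}: either rerun \Cref{lem:inf-to-L_infty} with the improved decay $\sup_{B_r}w\le Cr^{1-s}$, or, more cheaply, use $u\in L^\infty((0,T];C^{1+s-0^+}(\mathbb{R}^n))$ together with $u(t_2,\cdot)-\psi\ge0$ and $\nabla\big(u(t_2,\cdot)-\psi\big)=0$ on the contact set, which gives $d^{1+s-0^+}$ --- enough here, since the target exponent already carries a $0^+$ loss. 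With either version the computation closes and yields $\frac{1-s}{1+s}-0^+$.
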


\begin{proof}
We need to estimate
\[
\big[(-\Delta)^su-\pazocal{R}u\big](t,x) \chi_{\{u(t,\cdot)=\psi\}}-\big[(-\Delta)^su-\pazocal{R}u\big](s,x) \chi_{\{u(s,\cdot)=\psi\}}.
\]
We notice that we only need to consider $x\in \{u(\tau,\cdot)=\psi\}$ for some $\tau$ (otherwise the expression vanishes). Let $0<s<t\leq T$. By \Cref{lem:semiconvexity}, $\{u(t, \cdot)=\psi\}\subseteq \{u(s, \cdot)=\psi\}$, we can assume, without loss of generality, that $x\in \{u(s, \cdot)=\psi\}$. If $x\in \{u(s, \cdot)=\psi\} \setminus \{u(t, \cdot) = \psi\}$, by \Cref{comparison fractional and R}, the left hand side below vanishes and we can find $\tau \in (s,t)$ such that $x\in\partial\{u(\tau, \cdot)=\psi\}$
\[
\big[(-\Delta)^su-\pazocal{R}u\big](\tau,x)\chi_{\{u(\tau, \cdot)=\psi\}} = \big[(-\Delta)^su-\pazocal{R}u\big](t,x)\chi_{\{u(t, \cdot)=\psi\}}. 
\] Then, we can estimate the free boundary part replacing $t$ with $\tau$. Hence, we need only consider $x\in \{u(t, \cdot)=\psi\}$. In other words, we only need to estimate both terms
\begin{equation}\label{tobeestimated}
\big|(-\Delta)^su(t,x)-(-\Delta)^su(s,x)\big|, 
\end{equation}
\begin{equation}\label{tobeestimated1}
\big|\pazocal{R}u(t,x)-\pazocal{R}u(s,x)\big|. 
\end{equation}

By the same strategy as in \cite[Lemma 4.12]{CaFi-08}, we bound the \eqref{tobeestimated} by
\[
\begin{split}
 &\left| \int_{\mathbb{R}^n} \big[(-\Delta)^su(t,x)- (-\Delta)^su(t,z)\big]\phi_r(x-z) \, \dd z \right| \\ 
   &+ \left|\int_{\mathbb{R}^n} \big[(-\Delta)^su(t,z)- (-\Delta)^su(s,z) \big] \phi_r(x-z) \, \dd z \, \right| \\
   &+ \left| \int_{\mathbb{R}^n} \big[(-\Delta)^su(s,x)- (-\Delta)^su(s,z)\big] \phi_r(x-z) \, \dd z \right|,
\end{split}
\] 
where $\phi$ is a normalized smooth cutoff function supported in $B_1$, and $\phi_r(x)\coloneqq r^{-n}\phi(x/r)$. Then, the first and third terms can be controlled by $Cr^{1-s-0^+}$. For the second term, we integrate by parts $(-\Delta)^s$ and recall the Lipschitz-in-time regularity of $u$ (see \Cref{cor:lipschitz}), so that
\[
\big|(-\Delta)^su(t,x)-(-\Delta)^su(s,x)\big|\leq C\left(r^{1-s-0^+}+\frac{(t-s)}{r^{2s}}\right).
\]
The choice $r\coloneqq (t-s)^{1/(1+s)}$ thus yields
\[
\big|(-\Delta)^su(t,x)-(-\Delta)^su(s,x)\big| \leq C(t-s)^{\frac{1-s}{1+s}-0^+}.
\]
Analogously, we bound \eqref{tobeestimated1} by
\[
\begin{split}
 &\left| \int_{\mathbb{R}^n} \big[\pazocal{R}u(t,x)- \pazocal{R}u(t,z)\big]\phi_r(x-z) \, \dd z \right| \\ 
   &+ \left|\int_{\mathbb{R}^n} \big[\pazocal{R}u(t,z)- \pazocal{R}u(s,z) \big] \phi_r(x-z) \, \dd z \, \right| \\
   &+ \left| \int_{\mathbb{R}^n} \big[\pazocal{R}u(s,x)- \pazocal{R}u(s,z)\big] \phi_r(x-z) \, \dd z. \right|,
\end{split}
\]
By the space regularity of $\pazocal{R}u$ (see \eqref{regularityR}), the first and third terms can be controlled by $C r^{1-s+\gamma}$. By \Cref{cor:lipschitz} and performing an integration by parts, we can bound the second term by
\[
C\left((t-s)+\frac{(t-s)}{r}+\left|\int_{\mathbb{R}^n} M^+_{\mathcal{L}_0}(u(t,z)-u(s,z)) \phi_r(x-z) \, \dd z \, \right|\right).
\]
Recalling that $M^+_{\mathcal{L}_0}v\coloneqq \sup_{L\in \mathcal{L}_0}L v$, we have that for all $\epsilon>0$, $M^+_{\mathcal{L}_0}v-\epsilon\leq L v$. Since $\|\phi_r\|_{L^1(B_r)}=1$ and the fact that $L$ is an integrable by parts operator, we obtain
\[
\left|\int_{\mathbb{R}^n} M^+_{\mathcal{L}_0}(u(t,z)-u(s,z)) \phi_r(x-z) \, \dd z \, \right|\leq \int_{\mathbb{R}^n}|u(t,z)-u(s,z)|| M^+_{\mathcal{L}_0}\phi_r(x-z)|\, \dd z+\epsilon.
\]
Now, by the explicit form of Pucci operator (see \eqref{K}) and the Lipschitz in time regularity of $u$, we can bound the integral above by $\frac{(t-s)}{r^{2\sigma}}$ after letting $\epsilon\longrightarrow 0^+$. Thus,
\[
\big|\pazocal{R}u(t,x)-\pazocal{R}u(s,x)\big|\leq C\left(r^{1-s+\gamma}+(t-s)+\frac{(t-s)}{r}+\frac{(t-s)}{r^{2\sigma}}\right).
\]
Assume without loss of generality that $r\leq 1$ (otherwise, we have Lipschitz regularity in time). Now, if $2\sigma\leq 1$, we have $\gamma=s-1/2$, and so
\[
\big|\pazocal{R}u(t,x)-\pazocal{R}u(s,x) \big| \le C\left(r^{1/2}+(t-s)+\frac{(t-s)}{r}\right),
\] and so the choice $r\coloneqq(t-s)^{2/3}$ gives
\[
\big|\pazocal{R}u(t,x)-\pazocal{R}u(s,x)\big| \le C(t-s)^{1/3}.
\] 
Now, if $2\sigma> 1$, we have $\gamma=s-\sigma$ and so
\[
\big|\pazocal{R}u(t,x)-\pazocal{R}u(s,x) \big| \le C\left(r^{1-\sigma}+(t-s)+\frac{(t-s)}{r^{2\sigma}}\right),
\]
and so the choice $r\coloneqq (t-s)^{1/(1+\sigma)}$ gives
\[
\big|\pazocal{R}u(t,x)-\pazocal{R}u(s,x)\big| \le C(t-s)^{\frac{1-\sigma}{1+\sigma}}.
\] 
 Finally, observe that since $s>1/2$,
\begin{equation}\label{exp}
\frac{1}{3}>\frac{1-s}{1+s} \ \ \text{ and } \ \ \frac{1-\sigma}{1+\sigma}> \frac{1-s}{1+s}. \qedhere
\end{equation}
\end{proof}

Thus, by \eqref{regalphabeta}, \Cref{lem:iterate-first} and interpolation inequalities (see, for instance, \cite[Lemma 6.32]{GiTr-01}), we obtain
\begin{equation}\label{firstregintime}
\partial_t u \ \text{ and } \ (-\Delta)^su \ \in \ C_{t,x}^{\frac{1-s}{1+s}-0^+,1-s}((0,T]\times\mathbb{R}^n).
\end{equation}
The next lemma is the key ingredient to create a bootstrap in \Cref{maintheorem}.
\begin{lemma}\label{improvedreg} 
Let $\alpha\in (0, \frac{1-s}{2s})$ and assume
\[
\partial_t u \in C_{t,x}^{\alpha,1-s}((0,T] \times \mathbb{R}^n) \ \text{ and } \ (-\Delta)^s u \in L^\infty((0,T];C^{1-s}(\mathbb{R}^n)).
\] Then, with a uniform bound,
\[
\big[(-\Delta)^su-\pazocal{R}u\big] \chi_{\{u=\psi\}} \ \in \ C_{t,x}^{(1+\alpha) \frac{1-s}{1+s},1-s} ((0,T]\times \mathbb{R}^n).
\]
\end{lemma}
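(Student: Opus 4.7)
The plan is to refine the mollification argument of \Cref{lem:iterate-first}, now exploiting the improved time-H\"older regularity $\partial_t u \in C_{t}^{\alpha}$. The overall architecture stays the same: exactly as in the previous lemma, monotonicity of the contact set (\Cref{lem:semiconvexity}) reduces the analysis to estimating
\[
\big|(-\Delta)^s u(t,x) - (-\Delta)^s u(s,x)\big| \quad \text{and} \quad \big|\pazocal{R}u(t,x) - \pazocal{R}u(s,x)\big|
\]
at a point $x \in \{u(t,\cdot)=\psi\} \subseteq \{u(s,\cdot)=\psi\}$ (the mixed case $x \in \{u(s,\cdot)=\psi\}\setminus\{u(t,\cdot)=\psi\}$ is absorbed by inserting an intermediate time $\tau^{*}$ on the free boundary, as in \Cref{lem:iterate-first}).

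For the principal part, I would mollify with a bump $\phi_{r}$ of width $r$ and split $(-\Delta)^{s} u(t,x) - (-\Delta)^{s} u(s,x) = I_{1} + I_{2} + I_{3}$, where $I_{1}$ and $I_{3}$ are bounded by $C r^{1-s}$ using the spatial hypothesis $(-\Delta)^{s} u \in L^{\infty}((0,T]; C^{1-s}(\mathbb{R}^n))$, and the middle term (after integration by parts) is
\[
I_{2} = \int_{\mathbb{R}^{n}} \big[u(t,z) - u(s,z)\big] (-\Delta)^{s} \phi_{r}(x-z)\, \dd z.
\]
The new idea is to write $u(t,z) - u(s,z) = \int_{s}^{t} \partial_{\tau} u(\tau, z)\, \dd \tau$ and then decompose
\[
\partial_{\tau} u(\tau, z) = \partial_{t} u(t, z) + \big[\partial_{\tau} u(\tau, z) - \partial_{t} u(t, z)\big],
\]
so that, by the $C_{t}^{\alpha}$ assumption, the remainder term contributes to $I_{2}$ at most $C(t-s)^{1+\alpha} \|(-\Delta)^{s} \phi_{r}\|_{L^{1}} \leq C(t-s)^{1+\alpha} r^{-2s}$. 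For the principal piece $(t-s) \int \partial_{t} u(t,z) (-\Delta)^{s} \phi_{r}(x-z)\, \dd z$, use that $\int (-\Delta)^{s} \phi_{r} = 0$ to subtract $\partial_{t} u(t,x)$ and apply the spatial bound $|\partial_{t} u(t,z) - \partial_{t} u(t,x)| \leq C|z-x|^{1-s}$, reducing everything to the elementary estimate
\[
\int_{\mathbb{R}^{n}} |z-x|^{1-s} \big|(-\Delta)^{s} \phi_{r}(x-z)\big|\, \dd z \leq C r^{1-3s}
\]
(obtained by splitting at $|z-x| \sim r$; convergence at infinity uses $s > 1/3$, which holds).

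The three contributions combine to give
\[
\big|(-\Delta)^{s} u(t,x) - (-\Delta)^{s} u(s,x)\big| \leq C \Big( r^{1-s} + (t-s) r^{1-3s} + (t-s)^{1+\alpha} r^{-2s} \Big).
\]
The choice $r = (t-s)^{(1+\alpha)/(1+s)}$ balances the first and third terms at $(t-s)^{(1+\alpha)(1-s)/(1+s)}$, and the hypothesis $\alpha \leq (1-s)/(2s)$ is precisely what guarantees (via the algebraic inequality $1 - s \geq 2 s \alpha$) that the second term is dominated at this scale. For $\pazocal{R}u$, I will run the identical strategy, using the ellipticity bound with $M^{\pm}_{\mathcal{L}_{0}}$ and integration by parts against $L \phi_{r}$ for some $L \in \mathcal{L}_{0}$ (as in \Cref{lem:iterate-first}); since the order $\max\{1, 2\sigma\}$ of the lower-order operator is strictly less than $2s$, and the spatial regularity $\pazocal{R}u \in L^{\infty}((0,T]; C^{1-s+\gamma}(\mathbb{R}^n))$ is strictly better than that used for $(-\Delta)^{s} u$, the resulting exponent in time will strictly exceed $(1+\alpha)(1-s)/(1+s)$, so the $\pazocal{R}$ contribution will be non-binding.

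The main obstacle I expect is the careful bookkeeping of $\int |z-x|^{1-s} |(-\Delta)^{s} \phi_{r}|\, \dd z$ and its analogue $\int |z-x|^{1-s} |L \phi_{r}|\, \dd z$: both have long-range tails whose integrability depends on the relation between the order of the operator and the H\"older exponent $1-s$. For $(-\Delta)^{s}$ we are safe because $s > 1/2$; for $L$ in the regime $2\sigma \leq 1-s$, I would split into a short-range piece (using the improved bound $|u(t,z) - u(s,z)| \leq C(t-s)|z-x|^{1-s}$) and a long-range piece (using the Lipschitz-in-time bound from \Cref{cor:lipschitz}) to sidestep the non-integrability.
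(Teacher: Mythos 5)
Your proposal is correct and follows essentially the same route as the paper: mollification by $\phi_r$, integration by parts, a first-order Taylor expansion in time exploiting $\partial_t u\in C^\alpha_t$ to get the $(t-s)^{1+\alpha}r^{-2s}$ term, the spatial $C^{1-s}$ bound on $\partial_t u$ to control $\int|z-x|^{1-s}|(-\Delta)^s\phi_r|$, the choice $r=(t-s)^{(1+\alpha)/(1+s)}$, and the (correct) observation that the $\pazocal{R}u$ contribution is non-binding after the case analysis in $\sigma$. The only cosmetic difference is that you normalize the principal piece using $\int(-\Delta)^s\phi_r=0$, whereas the paper uses that $\partial_t u$ vanishes at the contact point $x$; both are valid.
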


\begin{proof}
As in the proof of the previous lemma, we first consider $x\in\{u(\tau,\cdot)=\psi\}$ for some $\tau$, thus without loss of generality, $x \in \{u(t, \cdot) = \psi\} \subseteq \{u(s, \cdot)=\psi\}$, where $0<s<t<T$, and estimate
\[
\begin{split}
\big|(-\Delta)^su(t,x)-(-\Delta)^su(s,x)\big| & \le \left| \int_{\mathbb{R}^n} \big[(-\Delta)^su(t,x) - (-\Delta)^su(t,z) \big]\phi_r(x-z) \, \dd z \right| \\ 
& \quad + \left|\int_{\mathbb{R}^n} \big[(-\Delta)^su(t,z) - (-\Delta)^s u(s,z) \big] \phi_r(x-z) \, \dd z \, \right| \\
& \quad + \left| \int_{\mathbb{R}^n} \big[(-\Delta)^su(s,x) - (-\Delta)^s u(s,z)\big] \phi_r(x-z) \, \dd z \right|.
\end{split}
\] Again, the first and third terms can be controlled by $Cr^{1-s}$. The second term we integrate by parts to obtain
\[
\begin{split}
\left| \int_{\mathbb{R}^n} \big[(-\Delta)^su(t,z) -  (-\Delta)^s u(s,z) \big]  \phi_r(x-z) \, \dd z \, \right|  \le  \left( \int_{\mathbb{R}^n} \big|\partial_t u(s,z)\big| \, \big|(-\Delta)^s\phi_r(x-z) \big| \, \dd z \right)(t-s) \\
+  \left|\int_{\mathbb{R}^n} \big[ u(t,z)-u(s,z)-\partial_t u(s,z) (t-s) \big] \, (-\Delta)^s\phi_r(x-z) \, \dd z \right|.
\end{split}
\] Since $\partial_t u (\cdot, z)$ is of class $C^\alpha$ in time and $\|(-\Delta)^s\phi_r\|_{L^1(B_r)}\leq C/r^{2s}$, the last term on the right hand side above is bounded by $C(t-s)^{1+\alpha}/r^{2s}$. For the integral in the first term, we use that $\partial_t u$ is of class $C^{1-s}$ in space and that $\partial_t u$ vanishes at $(t,x) \in \{u=\psi\}$ to show it is bounded by
\begin{equation}\label{boundut}
C \int_{\mathbb{R}^n} \min\{|x-z|^{1-s},1\} \, |\;\! (-\Delta)^s\phi_r(x-z)| \, \dd z.
\end{equation}
Since $\phi$ is compactly supported, $|(-\Delta)^s \phi(w)|\leq C |w|^{-n-2s}$ when $|w|$ is large enough. Hence, scaling yields, for all $w\in\mathbb{R}^n$,
\[
|(-\Delta)^s\phi_r(w)|\leq \frac{C}{r^{n+2s}+|w|^{n+2s}}.
\] Thus, \eqref{boundut} can be controled, up to a constant, by
\[\begin{split} 
\int_{B_1}\frac{|w|^{1-s}}{r^{n+2s}+|w|^{n+2s}} \, \dd w + \int_{\mathbb{R}^n\setminus B_1}\frac{1}{|w|^{n+2s}} \, \dd w \leq \frac{C}{r^{n+2s}}\int_{B_r}|w|^{1-s} \, \dd w + C \int_{B_1\setminus B_r}|w|^{1-3s-n} \, \dd w + C.
\end{split}\] This implies
\[
\int_{\mathbb{R}^n} \min\{|x-z|^{1-s},1\} \, |\;\! (-\Delta)^s\phi_r(x-z)| \, \dd z \le 
C(1+r^{1-3s}).
\]
Finally, we obtain 
\[
\big| (-\Delta)^s(u(t,x)-u(s,x)) \big| \le C \bigg[ r^{1-s} + \frac{(t-s)^{1+\alpha}}{r^{2s}} + C (t-s) (1+r^{1-3s}) \bigg].
\] Also,  since $\alpha<(1-s)/2s$, we have
\[
\alpha \le \frac{(1-s)(1+\alpha)}{1+s} \le 1 + \frac{(1-3s)(1+\alpha)}{1+s}
\] Therefore, the choice $r\coloneqq (t-s)^{(1+\alpha)/(1+s)}$ ensures
\[
|(-\Delta)^su(t,x)-(-\Delta)^su(s,x)|\leq C(t-s)^{(1+\alpha)\frac{1-s}{1+s}}.
\]
Analogously, we estimate
\[
\begin{split}
 |\pazocal{R}u(t,x)-\pazocal{R}u(s,x)|\leq&\left| \int_{\mathbb{R}^n} \big[\pazocal{R}u(t,x)- \pazocal{R}u(t,z)\big]\phi_r(x-z) \, \dd z \right| \\ 
   &+ \left|\int_{\mathbb{R}^n} \big[\pazocal{R}u(t,z)- \pazocal{R}u(s,z) \big] \phi_r(x-z) \, \dd z \, \right| \\
   &+ \left| \int_{\mathbb{R}^n} \big[\pazocal{R}u(s,x)- \pazocal{R}u(s,z)\big] \phi_r(x-z) \, \dd z. \right|,
\end{split}
\]
Once again, the first and third integrals are bounded by $C r^{1-s+\gamma}$. For the second integral, we split the integral into
\begin{equation}\label{splitR}
\begin{split}
C\Bigg(\left|\int_{\mathbb{R}^n} \big[\pazocal{I}u(t,z)- \pazocal{I}u(s,z) \big] \phi_r(x-z) \, \dd z \, \right|&+\left|\int_{\mathbb{R}^n} \big[\nabla u(t,z)- \nabla u(s,z) \big] \phi_r(x-z) \, \dd z \, \right|\\
&+\left|\int_{\mathbb{R}^n} \big[ u(t,z)- u(s,z) \big] \phi_r(x-z) \, \dd z \, \right|\Bigg).
\end{split}
\end{equation}
Notice that the third term in \eqref{splitR} is Lipschitz-in-time, thus there is nothing to prove. For the remaining terms, we proceed analogously and bound them by
\[
\begin{split}
	C(t-s)\Bigg(1+\frac{(t-s)^{\alpha}}{r}+\frac{(t-s)^{\alpha}}{r^{2\sigma}}&+\frac{1}{r^{n+1}}\int_{B_r}|w|^{1-s} \, \dd w + \int_{B_1\setminus B_r}|w|^{-s-n} \, \dd w\\
&+ \frac{1}{r^{n+2\sigma}}\int_{B_r}|w|^{1-s} \, \dd w + \int_{B_1\setminus B_r}|w|^{1-2\sigma-s-n} \, \dd w\Bigg).
\end{split}
\] 
Now, we estimate the integrals above:
\[\begin{split}
\frac{1}{r^{n+1}}\int_{B_r}|w|^{1-s} \, \dd w + \int_{B_1\setminus B_r}|w|^{-s-n} \, \dd w &\leq C\left(1+\frac{1}{r^s}\right),\\
\frac{1}{r^{n+2\sigma}}\int_{B_r}|w|^{1-s} \, \dd w + \int_{B_1\setminus B_r}|w|^{1-2\sigma-s-n} \, \dd w \leq& C\Bigg(\frac{1}{r^{2\sigma+s-1}}
\\&+\begin{cases}1 & \text{ if } \sigma<(1-s)/2;\\
1+|\log(r)| & \text{ if } \sigma=(1-s)/2;\\
1+r^{1-2\sigma-s} & \text{ if } \sigma>(1-s)/2.
\end{cases}\Bigg)
\end{split}\]
	
Hence, we conclude that, for $\sigma\geq 0$,
\[\begin{split}
	|\pazocal{R}u(t,x)-\pazocal{R}u(s,x)|\leq C r^{1-s+\gamma} +C(t-s)\Bigg(\frac{(t-s)^{\alpha}}{r}+\frac{(t-s)^{\alpha}}{r^{2\sigma}}+\frac{(1}{r^s}+\frac{1}{r^{2\sigma+s-1}}\\
	+\begin{cases}1 & \text{ if } \sigma<(1-s)/2;\\
1+|\log(r)| & \text{ if } \sigma=(1-s)/2;\\
1+r^{1-2\sigma-s} & \text{ if } \sigma>(1-s)/2.
\end{cases}\Bigg)
	\end{split}\]
If $0\leq\sigma<(1-s)/2$, then (recall that $\gamma=s-1/2$)
\[
|\pazocal{R}u(t,x)-\pazocal{R}u(s,x)|\leq C\left(r^{1/2}+(t-s)+\frac{(t-s)^{1+\alpha}}{r}+\frac{(t-s)}{r^s}\right).
\]
Then, by choosing $r\coloneqq (t-s)^{2(1+\alpha)/3}$, we have 
\[
|\pazocal{R}u(t,x)-\pazocal{R}u(s,x)|\leq C\left((t-s)^{\frac{1}{3}(1+\alpha)}+(t-s)^{1-\frac{2s}{3}(1+\alpha)}\right).
\]
We claim that $\frac{1}{3}(1+\alpha)\leq 1-\frac{2s}{3}(1+\alpha)$. Indeed, the claim holds if, and only if $(1+\alpha)(1+2s)\leq 3$. Now, since $\alpha<(1-s)/2s$, we have $(1+\alpha)(1+2s)\leq \frac{3}{2}+s+\frac{1}{2s}$. Moreover, $s+\frac{1}{2s}\leq \frac{3}{2} \iff 2s^2+1-3s<0$, and the latter holds since $1/2<s<1$. Hence, we conclude
\[
|\pazocal{R}u(t,x)-\pazocal{R}u(s,x)|\leq C(t-s)^{(1+\alpha)/3}.
\]
If $(1-s)/2\leq\sigma\leq 1/2$, then
\[
|\pazocal{R}u(t,x)-\pazocal{R}u(s,x)|\leq C\left(r^{1/2}+(t-s)\left(1+|\log(r)|\right)+\frac{(t-s)^{1+\alpha}}{r}+\frac{(t-s)}{r^s}\right).
\]
Once again, by choosing $r\coloneqq (t-s)^{2(1+\alpha)/3}$, we obtain
\[
|\pazocal{R}u(t,x)-\pazocal{R}u(s,x)|\leq C(t-s)^{(1+\alpha)/3}.
\]
Finally, if $\sigma>1/2$, then
\[
|\pazocal{R}u(t,x)-\pazocal{R}u(s,x)|\leq C\left(r^{1-\sigma}+(t-s)+\frac{(t-s)^{1+\alpha}}{r^{2\sigma}}+\frac{(t-s)}{r^{2\sigma+s-1}}\right).
\]
Choosing $r\coloneqq (t-s)^{(1+\alpha)/(1+\sigma)}$, we obtain
\[
|\pazocal{R}u(t,x)-\pazocal{R}u(s,x)|\leq C\left((t-s)^{(1+\alpha)\frac{1-\sigma}{1+\sigma}}+(t-s)^{1-(1+\alpha)\frac{2\sigma+s-1}{1+\sigma}}\right).
\]
We claim that $(1+\alpha)\frac{1-\sigma}{1+\sigma}\leq 1-(1+\alpha)\frac{2\sigma+s-1}{1+\sigma}$. Indeed, the claim holds if, and only if $(1+\alpha)(\sigma+s)\leq 1+\sigma$. Once again, since $\alpha<(1-s)/2s$, we have $(1+\alpha)(\sigma+s)\leq \frac{1+\sigma}{2}+\frac{s}{2}+\frac{\sigma}{2s}$. Moreover, $\frac{s}{2}+\frac{\sigma}{2s}\leq  \frac{1+\sigma}{2} \iff s^2-(1+\sigma)s+\sigma\leq 0$, and the latter holds since $1/2<\sigma<s$. Hence, we conclude 
\[
|\pazocal{R}u(t,x)-\pazocal{R}u(s,x)|\leq C(t-s)^{(1+\alpha)\frac{1-\sigma}{1+\sigma}}.
\]
Thus, by \eqref{exp}, we conclude the lemma.
\end{proof}

We are now ready to prove our main regularity result.

\begin{proof}[Proof of \Cref{maintheorem}]  The global Lipschitz regularity follows from \Cref{cor:lipschitz}. Given $\alpha \in \big( 0, \tfrac{1-s}{2s} \big)$, denote by $\Phi$ the affine function 
	\[
	\Phi(\alpha)\coloneqq (1+\alpha)\frac{1-s}{1+s}
	\] which is strictly increasing and satisfies $\Phi(\frac{1-s}{2s})=\frac{1-s}{2s}$. By \eqref{firstregintime}, we can apply \Cref{improvedreg}, which gives
\[
\big[(-\Delta)^su-\pazocal{R}u\big] \chi_{\{u=\psi\}} \in C^{\Phi\left(\frac{1-s}{1+s}-0^+\right),1-s}_{t,x}((0,T]\times\mathbb{R}^n).
\] Then, by \eqref{regalphabeta} and interpolation inequalities, we have
\[\partial_t u, \ (-\Delta)^su \ \in \ C^{\Phi\left(\frac{1-s}{1+s}-0^+\right),1-s}_{t,x}((0,T]\times\mathbb{R}^n),
\] since $\Phi(\alpha)<\frac{1-s}{2s}$ for $\alpha<\frac{1-s}{2s}$. Next, we apply \Cref{improvedreg} and \eqref{regalphabeta} iteratively  to obtain
\[\partial_t u, \ (-\Delta)^su \ \in \ C^{\Phi^n\left(\frac{1-s}{1+s}-0^+\right),1-s}_{t,x}((0,T]\times\mathbb{R}^n),
\]
which combined with \cite[Estimate A.5]{CaFi-08} gives
\[
(-\Delta)^su \in C^{\frac{1-s}{2s},1-s}_{t,x}((0,T]\times\mathbb{R}^n).
\]  Since $\Phi^{n} \left(\frac{1-s}{1+s}-0^+\right) \longrightarrow \frac{1-s}{2s}$ as $n\rightarrow \infty$, we also conclude
\[
\partial_t u \in C^{\frac{1-s}{2s}-0^+,1-s}_{t,x}((0,T]\times\mathbb{R}^n). \qedhere
\]
\end{proof}

\appendix
\section{Regularity results for \texorpdfstring{$\partial_t+(-\Delta)^s = f$}{} with \texorpdfstring{$f\in L^\infty$}{}}
We now adress the approximation of a solution of \eqref{prin} by a solution of \eqref{aprox}. The main ideas are found in \cite{ellipticcase}. We already used the regularities \eqref{regularityheateq} and \eqref{regalphabeta} of the fractional heat equation. However, in both cases the source $f$ is Hölder continuous. Nonetheless, we need a regularity result when $f$ is merely a bounded function in spacetime. Namely, for $\partial_t v+(-\Delta)^sv=f$, we have
\begin{equation}\label{fbounded}
\|v\|_{C^{1-0^+}((0,T];L^\infty(\mathbb{R}^n))}+\|v\|_{L^\infty((0,T];C^{2s-0^+}(\mathbb{R}^n))}\leq C(1+\|f\|_{L^\infty((0,T]\times\mathbb{R}^n)}).
\end{equation}
In order to show \eqref{fbounded}, we proceed as in \cite{CaFi-08}: we notice that 
\[
v(t,x)=\Gamma_s(t)\ast v(0)+\int_0^t \Gamma_s(t-\tau)\ast f(\tau)\,\dd \tau,
\]
where $\Gamma_s(t,y)$ is fundamental solution of the fractional heat equation and it behaves as
\begin{equation}\label{Gammabehavior}
\begin{split}
|\Gamma_s(t,y)|\sim \frac{t}{t^{\frac{n+2s}{2s}}+|y|^{n+2s}},& \quad |\partial_t\Gamma_s(t,y)|\leq C\frac{1}{t^{\frac{n+2s}{2s}}+|y|^{n+2s}},\\
|\nabla_y\Gamma_s(t,y)|\leq C\frac{1}{|y|}\frac{t}{t^{\frac{n+2s}{2s}}+|y|^{n+2s}},& \quad |D^2_y\Gamma_s(t,y)|\leq C\frac{1}{|y|^2}\frac{t}{t^{\frac{n+2s}{2s}}+|y|^{n+2s}}.
\end{split}
\end{equation}
Since the initial condition of \eqref{prin} is well-behaved (namely, it satisfies \eqref{psi}), the first term is smooth. Thus, we only need to estimate the source term. In order to do it, we will need the following estimates:
\begin{itemize}
\item  there exists a constant $C>0$ such that, for all $h> 0$,
\begin{equation}\label{a10}
\int_{\mathbb{R}^n}\frac{h}{h^{\frac{n+2s}{2s}}+|z|^{n+2s}}\,\dd z\leq C(1+h);
\end{equation}

\item there exists a constant $C>0$ such that, for all $h> 0$,
\begin{equation}\label{a11}
\int_0^t\frac{t-\tau}{(t-\tau)^{\frac{n+2s}{2s}}+h^{n+2s}}\,\dd \tau\leq 
C\min\{h^{-n-2s},h^{-n+2s}\}.
\end{equation}
\end{itemize} 
The proof of both are very simple: for \eqref{a10}, one splits the integral into $B_{h^{1/2s}}$ and $\mathbb{R}^n\setminus B_{h^{1/2s}}$, thus
\[
\int_{\mathbb{R}^n}\frac{h}{h^{\frac{n+2s}{2s}}+|z|^{n+2s}}\,\dd z\leq \frac{1}{h^{n/2s}}|B_{h^{1/2s}}|+h\int_{\mathbb{R}^n\setminus B_{h^{1/2s}}}\frac{1}{|z|^{n+2s}}\,\dd z\leq C(1+h).
\]
For \eqref{a11}, if $h\geq 1$, the bound is trivial, since the integrand is bounded by $h^{-n-2s}$; otherwise, if $h\in(0,1]$, we split the integral into $[0,t-h^{2s}]$ and $[t-h^{2s},t]$, thus for $n\geq 2$ we have
\[
\int_0^t\frac{t-\tau}{(t-\tau)^{\frac{n+2s}{2s}}+h^{n+2s}}\,\dd \tau\leq \int_0^{t-h^{2s}}(t-\tau)^{-n/2s} \,\dd \tau+\frac{1}{h^{n+2s}}\int_{t-h^{2s}}^t(t-\tau) \,\dd \tau\leq C h^{-n+2s}.
\]

For the time regularity of \eqref{fbounded}, notice that for $u<t$, we have
\[
|v(t)-v(u)|\leq C\left(\int_u^t |\Gamma_s(t-\tau)\ast f(\tau)|\,\dd \tau+\int_0^u |(\Gamma_s(t-\tau)-\Gamma_s(u-\tau))\ast f(\tau)|\,\dd \tau\right).
\]
By \eqref{Gammabehavior} and \eqref{a10} with $h=t-\tau$, the first term can be bounded by
\[
C\|f\|_{L^\infty((0,T]\times\mathbb{R}^n)}\int_u^t [1+(t-\tau)]\,\dd \tau\leq C\|f\|_{L^\infty((0,T]\times\mathbb{R}^n)}(t-u),
\]
while the second term can be bounded by
\[\begin{split}
&C\|f\|_{L^\infty((0,T]\times\mathbb{R}^n)}\int_0^u\min\{t-u,u-\tau\}(1+(u-\tau)^{-1})\,\dd \tau\\
&\leq C\|f\|_{L^\infty((0,T]\times\mathbb{R}^n)}\left((t-u)\int_0^{u-(t-u)}(1+(u-\tau)^{-1})\,\dd \tau+\int_{u-(t-u)}^u(1+(u-\tau))\,\dd \tau\right)\\
&\leq C\|f\|_{L^\infty((0,T]\times\mathbb{R}^n)}(t-u)|\log(t-u)|.
\end{split}\]

For the space regularity of \eqref{fbounded}, since $2s>1$, we evaluate
\[
|\nabla v(x)-\nabla v(z)|\leq C\|f\|_{L^\infty((0,T]\times\mathbb{R}^n)}\int_0^t\int_{\mathbb{R}^n}|\nabla \Gamma_s(t-\tau,x-y)-\nabla \Gamma_s(t-\tau,z-y)|\,\dd y\,\dd \tau.
\]
We split the space integral into $\{|x-z|\leq |x-y|/2\}$ and $\{|x-z|\geq |x-y|/2\}$. For the first region, by \eqref{Gammabehavior} and \eqref{a11} with $h=|x-y|$, we can bound the integral ($n\geq 2$) by
\[\begin{split}
&C|x-z|\int_{\{|x-z|\leq |x-y|/2\}}|x-y|^{-2}\min\{|x-y|^{-n-2s},|x-y|^{-n+2s}\} \,\dd y\\
&\leq C|x-z|^{1+2s-2}\int_{\{|x-z|\leq |x-y|/2\leq 1\}}|x-y|^{-n} \,\dd y+C|x-z|\int_{\{|x-y|/2\geq 1\}}|x-y|^{-2-n-2s}\,\dd y\\
&\leq C|x-z|^{2s-1}\left|\log|x-z|\right|,
\end{split}\]
while 
For the second region, by \eqref{Gammabehavior}, \eqref{a11} with $h=|x-y|$ and noticing that $\{|x-z|\geq |x-y|/2\}\subset B_{3|x-z|}(x)\cap B_{3|x-z|}(z)$, we can bound the integral by
\[
\int_{B_{3|x-z|}(x)}\frac{1}{|x-y|}|x-y|^{-n+2s}\,\dd y\leq |x-z|^{2s-1}.
\]

\end{document}